\theoremstyle{plain}
\newtheorem{theo}{Theorem}[section]
\newtheorem{thm}[theo]{Theorem}
\newtheorem{prop}[theo]{Proposition}
\newtheorem{coro}[theo]{Corollary}
\newtheorem{lem}[theo]{Lemma}
\newtheorem*{lem*}{Lemma}
\newtheorem*{thmA}{Theorem A}
\newtheorem*{corC}{Corollary C}
\newtheorem*{thmB'}{Theorem B'}
\newtheorem*{thmB}{Theorem B}
\newtheorem*{thmD}{Theorem D}
\newtheorem*{thmE}{Theorem E}
\newtheorem*{corF}{Corollary F}
\newtheorem*{corG}{Corollary G}
\theoremstyle{definition}
\newtheorem{defi}[theo]{Definition}
\newtheorem{defn}[theo]{Definition}
\newtheorem{obs}[theo]{Observation}
\newtheorem*{obs*}{Observation}
\newtheorem{rem}[theo]{Remark}
\newtheorem*{rem*}{Remark}
\numberwithin{equation}{section}
\DeclareMathOperator{\dist}{dist}
\DeclareMathOperator{\id}{Id}
\DeclareMathOperator{\w}{wind}
\newcommand{\R}{\mathbb{R}}
\newcommand{\N}{\mathbb{N}}
\newcommand{\Z}{\mathbb{Z}}
\newcommand{\C}{\mathbb{C}}
\newcommand{\D}{\mathbb{D}}
\newcommand{\chat}{\widehat{\mathbb{C}}}
\newcommand{\rs}{\hat{\mathbb{C}}}
\newcommand{\re}{\mathrm{Re}}
\newcommand{\bif}{\mathrm{Bif}}
\newcommand{\lam}{\lambda}
\newcommand{\B}{\mathbb{B}}
\newcommand{\limt}{\lim_{t \to +\infty}}
\renewcommand{\H}{\mathbb{H}}
\newcommand{\leucl}{\ell_{\rm Eucl}}
\newcommand{\g}{\color{ForestGreen}}
\definecolor{Violet}{cmyk}{0.79,0.88,0,0}
\definecolor{Lavender}{cmyk}{0,0.48,0,0}
\newcommand{\la}{\lambda}
\renewcommand{\phi}{\varphi}
\newcommand{\act}{\mathcal{A}}
\renewcommand{\AA}{\mathcal{A}}
\newcommand{\ra}{\rightarrow}
\newcommand{\FF}{\mathcal{F}}
\newcommand{\JJ}{\mathcal{J}}
\newcommand{\acal}{\mathcal{A}}
\title{Bifurcation loci of families of finite type meromorphic maps}
\date{\today}
\author{Matthieu Astorg$^{\dag}$}
\author{Anna Miriam benini$^*$}
\author{N\'uria Fagella$^\ddag$}
\thanks{$^{\dag}$ Partially supported by the grant ANR JCJC Fatou ANR-17- CE40-0002-01 and the grant ANR PADAWAN 21-CE-40-0012-01}
\thanks{$^*$  Partially supported by the Indam group GNAMPA, and by  PRIN 2017, Real and Complex Manifolds: Topology, Geometry and holomorphic dynamics.}
\thanks{$^\ddag$  Partially supported by grants PID2020-118281GB-C32 and CEX2020-001084-M (Maria de Maeztu Excellence program) from the Spanish state research agency,  and 
ICREA Acad\`emia 2020 from the Catalan government.   }
\address{ M. Astorg:  Institut Denis Poisson\\ Université d'Orléans\\
  France} \email{matthieu.astorg@univ-orleans.fr}
\address{ A.M. Benini: Dipartimento  di   Matematica Fisica e Informatica\\
Universit\'a di Parma\\ Italy
} \email{ambenini@gmail.com}
\address{ N.  Fagella$^{1,2}$:  \newline 
$1$ Dep. de Matem\`atiques i Inform\`atica\\ Universitat de Barcelona\\ Barcelona\\  Spain \newline
$2$ Centre de Recerca Matem\`atica\\ Barcelona\\ Spain} \email{nfagella@ub.edu}
\subjclass[2020]{Primary 37F46,  30D05, 37F10, 30D30, 37F44.}
\begin{document}
\maketitle

\begin{abstract}
We show that $\JJ-$ stability is open and dense in natural families of finite type meromorphic maps, that is, meromorphic maps of one complex variable with a finite number of singular values. This extends the results of Ma\~{n}\'e-Sad-Sullivan \cite{mss} for rational maps of the Riemann sphere and those of Eremenko and Lyubich \cite{el} for entire maps of finite type of the complex plane.
This result is obtained as a consequence of a detailed study of a new type of bifurcation that arises with the presence  poles in addition to  essential singularities (namely periodic orbits exiting the domain of definition of the map along a parameter curve), and in particular  its relation with the bifurcations in the dynamics of singular values.   
The presence of these new bifurcation parameters require essentially different methods to those used in previous work for rational or entire maps. 

%
%
\end{abstract}
\section{Introduction}

Structural stability is a key concept in dynamical systems which is attributed to Andronov and Pontryagin in 1937 and even further to Poincaré in the early 1880's. Roughly speaking, a dynamical system is structurally stable (also called {\em robust}) if its qualitative properties do not change under small perturbations. Somewhat more precisely, a discrete dynamical system $f:X\to X$ (in a certain class of regularity $\mathcal C$) is structurally stable, if all maps $g$ sufficiently close to $f$ in $\mathcal C$ are topologically conjugate to $f$, that is, if there exist a homeomorphism $\varphi:X\to X$ such that $\varphi \circ f =g\circ \varphi$, and $\phi$ depends continuously on $g$.

The problem of density of structurally stable systems in the appropriate class is subtle 
and has been around for a long time.  While this property 
is generically true for $\mathcal C^r$ maps, $r\geq 1$, in real dimension one \cite{KSV}, works of Smale, Williams and Newhouse among others (see e.g. \cite{smale66, CS}) concluded that structurally stable systems are not dense in the space of diffeomorphisms on manifolds of real dimension larger or equal than 2.
%
%

In the context of holomorphic dynamics, a more natural but closely related notion is \emph{$\JJ$-stability}, which informally speaking means structural stability in restriction to the Julia set (see Section  \ref{sec:holo} for a precise definition). The seminal work of Ma\~n\'e, Sad and Sullivan \cite{mss} and Lyubich \cite{lyu1} showed that $\JJ$-stable systems form an open and dense set in the space of holomorphic maps on the Riemann sphere (i.e. rational maps).   Their results therefore solved the problem of density of $\JJ-$stability for holomorphic maps on {\em compact} Riemann surfaces.

One of the key factors in the proof is the renowned $\lambda-$lemma, tying $\JJ$-stability to the holomorphic movement 
of periodic orbits which, in a compact manifold, can only fail when periodic orbits collide in a parabolic cycle. With this tool in hand, they provided a complete set of equivalences between possible notions of stability in parameter space:  $\JJ$-stability,  stability of critical orbits and 
stability of periodic cycles. The subset of functions where these properties fail to hold is called the \emph{bifurcation locus}.
The study of bifurcation loci is related to some of the deepest questions in holomorphic dynamics: for instance, the bifurcation locus of the quadratic family $\{z \mapsto z^2+\lam\}_{\lam \in \C}$  is exactly the boundary of the famous Mandelbrot set, whose fine description is the object of the principal conjecture in the field (MLC conjecture).

When trying to extend these results to maps on non-compact manifolds (like the complex plane) one must deal with an additional possibility for the failure of periodic orbits being analytically continued, namely the possibility of periodic orbits {\em exiting the domain} at a certain parameter value. As an example, one can observe this new type of bifurcation occurring at $\la_0=0$ for the family $f_\la(z)=z+\la+e^z$ , where the fixed points {\em disappear to infinity} (the essential singularity) when considering curves of parameters converging to $0$.  Eremenko and Lyubich \cite{el} showed that this phenomenon does {\em not} occur for  entire maps of the complex plane with a finite number of {\em singular values} (points where some branch of the inverse fails to be well-defined), known as {\em finite type} entire maps. Consequently, they were able to conclude that  $\JJ-$stability is also dense in this class of functions.


In the presence of poles, that is, in the context of finite type meromorphic maps, simple examples like  $\lambda \tan z$ for $\la_0=\pi/2$, show that this new type of bifurcations of cycles disappearing to infinity {\em do} occur and hence  obstruct most of the arguments used for the Stability Theorem in \cite{mss,lyu1}. 
 
The goal of this paper is to prove that  $\JJ$-stability is dense also in this setting, and we accomplish it by performing a detailed analysis of the new type of bifurcation. In particular we will see how these bifurcation parameters relate to the stability of singular orbits (Theorems A and B) and to parabolic parameters (Theorem D), resulting in a Stability Theorem (Theorem E), from which we conclude that the bifurcation locus has empty interior (corollary F).


One may wonder whether our results might extend beyond finite type meromorphic maps.
 A simple example (see Section \ref{bifdense}) shows that,  in general,  density of $\JJ$-stability fails for natural  families  of entire maps  which are not of finite type. Other   results in \cite{EpsRem} (unpublished) provide an example of a family of maps with an infinite (but bounded) set of singular values (hence not of finite type) and infinitely many attractors, in the spirit of the Newhouse example \cite{newhouse}. 
 Nevertheless, results analogous to the ones presented here are likely to hold  for general finite type maps in the sense of Epstein (holomorphic finite type maps defined on open subsets of the complex plane) and will be investigated in a subsequent paper.
  Finally, considering  possible generalizations to higher dimensions, structural stability (in the sense of Berteloot, Bianchi and Dupont \cite{BBD}) is known {\em not} to be dense in the family of endomorphisms of $\mathbb{P}^k$ for any $k\geq 2$ (see e.g. \cite{dujardin, taflin,biebler}).

\subsection*{\large \bf Statement of results} 

We start by giving some necessary definitions, starting by the holomorphic families which are the object of our study.

\begin{defi}[Natural family] \label{def:natfam}
	Let $M$ be a complex connected manifold. A {\em natural family} of finite type meromorphic maps is a  family $\{f_\lam\}_{\lam \in M}$ of the form $f_\lam=\phi_{\lam}\circ f_{\lam_0} \circ \psi_{\lam}^{-1}$, where $f:=f_{\la_0}$ is a finite type meromorphic map, and
	$\phi_\lam, \psi_{\lam}$ are quasiconformal homeomorphisms depending holomorphically on $\lam \in M$, and such that $\psi_{\lam}(\infty)=\infty$.
\end{defi}

Under these conditions, one can check that $f_\la$ depends holomorphically on $\la$, (i.e. $\la \mapsto f_\la(z)$ is holomorphic for every fixed $z\in \C$). 

Let $S(f)$ denote the set of singular values (critical or asymptotic, see Section \ref{subsec:asympv}) of a meromorphic map $f$.  If $f$ is of finite type, then it can be embedded in a  finite dimensional complex analytic manifold of dimension $\# S(f)+2$ \cite{el,EpsteinThesis}, obtained by allowing $\phi$ and $\psi$ to be any pair of homeomorphisms that make $\phi\circ f\circ \psi^{-1}$ holomorphic (or meromorphic) and satisfy $\psi(\infty)=\infty$. Hence natural families can be viewed as subfamilies in this natural parameter space.

Many simple families are natural, like for example the exponential $E_\la(z)=\la  e^z$, the tangent $T_\la(z)=\la \tan(z)$ or the quadratic family $Q_\la(z)=z^2+\la$. In these three examples, the map  $\phi_\la$ is conformal, and $\psi_\la=\id$. Notice that the singular values of $f_\la$ are marked points that can be followed holomorphically in $\la\in M$, hence their number and their nature (critical or asymptotic) remain constant throughout the entire family. The same is true for their preimages: both critical points and {\em asymptotic tracts} (logarithmic preimages of punctured neighborhoods of the asymptotic values, (see Section \ref{subsec:asympv})  can be followed holomorphically in $\la$ and their multiplicity remains constant. One may naturally ask how restrictive is the concept of a natural family.  As we show in Theorem \ref{thm:natural}, the answer is that the properties described above are necessary and sufficient conditions for an arbitrary holomorphic family of maps to be locally natural. Hence, since all of our main results are local in parameter space, 
they still apply if we replace the assumption that $\{f_\lam\}_{\lam \in M}$ is natural by the  assumption that  $S(f_\lam)$ and $f_\lam^{-1}(S(f_\lam))$ both move holomorphically over $M$.

Next we define the concept of a cycle disappearing to infinity or exiting the domain (both terms will be used indistinctively).
\begin{defi}[Cycle disappearing to infinity]\label{def:exit}
	Let $\{f_\lam\}_{\lam \in M}$ be a holomorphic family of meromorphic maps.
	We say that a cycle of period $m \geq 1$ {\em disappears to infinity} at $\lam_0 \in M$  (or {\em exits de domain} at $\la_0$)  if there exist two continuous curves $t \mapsto \lam(t)$ and 
	$t \mapsto z(t)$ such that:
	\begin{enumerate}
		\item for all $t>0$, $\lam(t) \in M$ and $z(t) \in \C$, with $f_{\lam(t)}^m(z(t))=z(t)$
		\item $\limt \lam(t)=\lam_0 \in M$ and $\limt z(t)=\infty$.
	\end{enumerate}
\end{defi}
As mentioned above, Eremenko and Lyubich \cite{el} showed that cycles cannot exit the domain for holomorphic families of entire functions.

The phenomenon of cycles disappearing to infinity was previously observed in several particular slices of meromorphic functions starting with the early studies of the tangent family $T_\la(z)=\la\tan(z)$ by Devaney, Keen and Kotus \cite{KK, DevaneyKeen, DK}, see also \cite{kotus2002diffusion}, followed by several other families with two asymptotic values  \cite{CK19,CKY22} and generalized to some dynamically defined one-dimensional families 
in \cite{FK}.  Following the terminology in the literature we define virtual cycles which, as we will see, describe limits of actual cycles after they disappear at infinity.

\begin{defi}[Virtual cycle]\label{def:virtualcycle}
	Let $f: \C \to \chat$ be a meromorphic map. 
	A {\em virtual cycle} of length $n$ is a finite, cyclically ordered set $z_0, z_1, \ldots, z_{n-1}$   such that for all $i$, either $z_i \in \C$ and $z_{i+1}=f(z_i)$,
	or $z_i=\infty$ and $z_{i+1}$ is an asymptotic value for $f$, and at least one of the $z_i$ is equal to $\infty$.  If $z_i=\infty$ only for one value of $i$ then we say that the virtual cycle has {\em minimal length} $n$.
\end{defi}
	
 If a virtual cycle remains after perturbation within the family, then it is called {\em persistent}.

\begin{defi}[Persistent virtual cycle]\label{def:persistentvc}
	Let $\{f_\lam\}_{\lam \in M}$ be a holomorphic family of meromorphic maps,
	let $\lam_0 \in M$ and assume that $f_{\lam_0}$ has a virtual cycle
	$z_0, \ldots, z_{n-1}$.
	If there exist holomorphic germs $\lam \mapsto z_i(\lam)$ defined near $\lam_0$ in $M$
	such that
	\begin{enumerate}
		\item $z_i(\lam_0)=z_i$
		\item  $z_i(\lam)\equiv\infty$ if $z_i=\infty$
		\item and $z_0(\lam), \ldots, z_{n-1}(\lam)$ is a virtual cycle for $f_\lam$,
	\end{enumerate}
 	then we say that $z_0, \ldots, z_{n-1}$ is a \emph{persistent} virtual cycle.
\end{defi}

In particular in a holomorphic family, if $v(\lam_0)$ is an asymptotic value such that $f_{\lam_0}^n(v(\lam_0))=\infty$ for some $n \geq 0$, then  $(v(\lam_0), f_{\lam_0}(v(\lam_0)), \ldots, \infty)$ is a virtual cycle of  minimal length $n+1$. (The case $n=0$ corresponds to the situation where $\infty$ itself is an asymptotic value).  This  virtual cycle is persistent if and only if the singular relation  $f_{\lam}^n(v(\lam))=\infty$ is satisfied in all of $M$.  If this is not the case, i.e. if a virtual cycle for $\la_0$ is non-persistent, we will say that  $\la_0$ is  a {\em virtual cycle parameter}.

Our next and last definition concerns the concept of activity or passivity of a singular value
(see \cite{Levin}, \cite{mcmbook}).

\begin{defi}[Passive (active) singular value]\label{defn:active singular values}
	Let $\{f_\lam\}_{\lam \in M}$ be a holomorphic family of finite type 
	rational, entire or meromorphic maps. Let $v(\lam)$ be a singular value (or a critical point) of $f_\lam$ 
	depending holomorphically on $\lam$ near some $\lam_0 \in M$.
	We say that $v(\lam)$ is \emph{passive} at $\lam_0$ if there exists a neighborhood $V$ of $\lam_0$ in $M$ such that:
	\begin{enumerate}
		\item	either $f_\lam^n(v(\lam))=\infty$ for all $\lam \in V$; or
		\item the family
		$\{\lam \mapsto f_\lam^n(v(\lam)) \}_{n \in \N}$ is well-defined and normal on $V$.
	\end{enumerate}
We say that  $v(\la)$ is {\em active} at $\la_0$ if it is not passive. 
\end{defi}

We are now ready to state our first result, which connects the three concepts defined above: cycles disappearing to infinity, virtual cycles and the activity of a singular value.

\begin{thmA}[Activity Theorem]
		Let $\{f_\lam\}_{\lam \in M}$ be a natural family of finite type meromorphic maps, and
		 assume  that a cycle of period $n$ disappears to infinity at $\lam_0 \in M$. Then, this cycle converges to a virtual cycle for $f_{\lam_0}$,  which contains (at least) either an active \emph{asymptotic value}, or an active \emph{critical point}. 
	\end{thmA}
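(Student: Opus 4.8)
The plan is to argue in two stages: first extract a subsequential limit of the vanishing cycle and recognize it as a virtual cycle for $f_{\lam_0}$, then prove the activity statement by contradiction, exploiting the fact that a cycle disappearing to infinity is eventually attracting.

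\textbf{Extracting a virtual cycle.} Let $t\mapsto(\lam(t),z(t))$ be curves as in Definition \ref{def:exit}, and set $z_0(t)=z(t)$, $z_{i+1}(t)=f_{\lam(t)}(z_i(t))$, so $(z_0(t),\dots,z_{n-1}(t))$ is the cycle, $f_{\lam(t)}(z_{n-1}(t))=z_0(t)$, and $z_0(t)\to\infty$. By compactness of $\chat^{\,n}$ there is $t_k\to+\infty$ along which $z_i(t_k)\to\zeta_i\in\chat$ for every $i$, with $\zeta_0=\infty$. To see $(\zeta_0,\dots,\zeta_{n-1})$ is a virtual cycle for $f_{\lam_0}$, fix $i$. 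If $\zeta_i\in\C$ is not a pole of $f_{\lam_0}$, then since the poles of $f_\lam$ move continuously, $f_\lam$ is holomorphic on a fixed neighbourhood of $\zeta_i$ for $\lam$ near $\lam_0$ and converges there uniformly to $f_{\lam_0}$, so $\zeta_{i+1}=f_{\lam_0}(\zeta_i)$. If $\zeta_i\in\C$ is a pole, then (Rouché applied to $1/f_\lam$) $f_\lam$ has a pole $p(\lam)\to\zeta_i$, hence $z_i(t_k)-p(\lam(t_k))\to0$ and $\zeta_{i+1}=\lim f_{\lam(t_k)}(z_i(t_k))=\infty=f_{\lam_0}(\zeta_i)$. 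The delicate case is $\zeta_i=\infty$, where one must show $\zeta_{i+1}$ is an asymptotic value of $f_{\lam_0}$; this is precisely where finiteness of type enters. If $\zeta_{i+1}$ were not a singular value, pick a round disc $D\ni\zeta_{i+1}$ with $\overline D$ disjoint from $S(f_\lam)$ for all $\lam$ near $\lam_0$, so $f_\lam\colon f_\lam^{-1}(D)\to D$ is an unbranched covering and each component of $f_\lam^{-1}(D)$ maps conformally onto $D$; one then excludes the possibility that the component containing $z_i(t_k)$ runs off to infinity, using the structure of a finite type meromorphic map near $\infty$ (finitely many logarithmic tracts over the asymptotic values, outside of which preimages of such a disc stay in a controlled region) together with the dichotomy that the predecessor $z_{i-1}(t_k)$ either converges to a pole — in which case a local computation at that pole forces $z_i(t_k)$ into a tract — or itself escapes, in which case one unwinds the cycle. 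This is the meromorphic analogue of the Eremenko--Lyubich argument, and I expect it to be the more technical half of Step 1.

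\textbf{Structure of the limit.} Wherever $\zeta_i=\infty$ we then have $\zeta_{i-1}$ a pole $p$ of $f_{\lam_0}$ and $\zeta_{i+1}=a$ an asymptotic value; moreover, since the virtual cycle is cyclic and contains $\infty$, every finite $\zeta_i$ is strictly pre-periodic with $f_{\lam_0}$-orbit reaching $\infty$ after finitely many steps, so no finite point of the virtual cycle is periodic for $f_{\lam_0}$.

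\textbf{Activity, by contradiction.} Suppose every asymptotic value and every critical point of $f_{\lam_0}$ occurring in the virtual cycle is passive at $\lam_0$. Fix $i_0$ with $\zeta_{i_0}=\infty$ and put $a:=\zeta_{i_0+1}$, an asymptotic value, passive by assumption, with holomorphic continuation $a(\lam)$. Since the $f_{\lam_0}$-orbit of $a$ reaches $\infty$ in finitely many steps, the ``normal orbit'' alternative of Definition \ref{defn:active singular values} is impossible (the orbit is not well-defined past that step), so the other alternative holds: there exist $m$ and a neighbourhood $V$ of $\lam_0$ with $f_\lam^m(a(\lam))=\infty$ for all $\lam\in V$. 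In particular, for every large $t$, the $f_{\lam(t)}$-orbit of $a(\lam(t))$ terminates at $\infty$ after exactly $m$ steps.

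I will contradict this by showing $a(\lam(t))$ is attracted by the cycle $\Sigma_t:=\{z_0(t),\dots,z_{n-1}(t)\}$ for $t$ large. The crucial point is that a cycle disappearing to infinity is eventually attracting: its multiplier $\rho(t)=\prod_i f_{\lam(t)}'(z_i(t))$ is a product of factors of three types — at a pole-follower $z_{i-1}(t)\to p$ the factor blows up like a negative power of $\dist(z_{i-1}(t),p)$, at a point $z_i(t)\to\infty$ lying in a logarithmic tract the factor is exponentially small in the tract coordinate, and along the finite orbit segments the factors stay bounded — and tracking how the image of a pole-follower sits inside the next tract shows that the exponential decay from the tracts dominates the polynomial blow-up from the poles, forcing $\rho(t)\to0$. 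This exponential-beats-polynomial balance is, to my mind, the main obstacle of the whole proof. Granting it, for $t$ large $\Sigma_t$ is attracting and its immediate basin contains a disc of definite radius around $z_{i_0+1}(t)$, a point lying at bounded distance from the singular set. But $z_{i_0+1}(t)=f_{\lam(t)}(z_{i_0}(t))=a(\lam(t))+e^{-w_{i_0}(t)}$, where $w_{i_0}(t)$ is the tract coordinate of $z_{i_0}(t)$ and $\re w_{i_0}(t)\to+\infty$, so $z_{i_0+1}(t)\to a$ and $|z_{i_0+1}(t)-a(\lam(t))|\to0$; hence for $t$ large $a(\lam(t))$ lies in the immediate basin of $\Sigma_t$ and is attracted to it, so its orbit cannot terminate at $\infty$ — contradicting the previous paragraph. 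Therefore $a$, or some critical point appearing in the virtual cycle, is active at $\lam_0$, as claimed. (We implicitly use the standard fact that an attracting cycle of a finite type map attracts a singular value; here we need only its weak form, since $a$ itself is exhibited in the basin.)
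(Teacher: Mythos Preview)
Your extraction of the virtual cycle is broadly right in spirit, though the paper does it more directly using the natural family structure: writing $f_{\lam(t)}=\phi_{\lam(t)}\circ f\circ\psi_{\lam(t)}^{-1}$ with $\psi_{\lam(t)}(\infty)=\infty$, one has $f\bigl(\psi_{\lam(t)}^{-1}(z_i(t))\bigr)=\phi_{\lam(t)}^{-1}(z_{i+1}(t))\to\zeta_{i+1}$ while $\psi_{\lam(t)}^{-1}(z_i(t))\to\infty$, so $\zeta_{i+1}$ is an asymptotic value of $f=f_{\lam_0}$ by definition (see Lemma~\ref{lem:limcycle}). Note in passing that, contrary to what you write, finite type is \emph{not} needed here; it only enters in the activity step.

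The activity step, however, has a genuine gap. Your central claim --- that a cycle disappearing to infinity is eventually attracting with multiplier tending to zero --- is not true in general, and the paper does not attempt to prove anything like it. What Theorem~B shows is that at a virtual cycle parameter one can \emph{engineer} a specific curve $\lam(t)\to\lam_0$ along which a specific cycle of the right period is attracting; the construction chooses $\lam(t)$ so that $f_{\lam(t)}^n(v(\lam(t)))$ lands exactly on the central geodesic of the tract, and the ``exponential beats polynomial'' balance you invoke is precisely what that choice is designed to win. For an \emph{arbitrary} curve $\lam(t)\to\lam_0$ as in the hypothesis of Theorem~A, the cycle you are handed can perfectly well be repelling: approach $\lam_0$ from outside the shell component in parameter space, on the far side of one of the parabolic parameters that Theorem~D produces, and the analytically continued cycle has crossed $|\rho|=1$. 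Indeed, Theorem~D explicitly assumes the disappearing cycle is attracting (and uses Fatou--Shishikura to get finiteness of the relevant sheet of $P_n$); this assumption would be superfluous if your claim held. Even granting the multiplier claim, the assertion that the immediate basin contains a disc of ``definite radius'' about $z_{i_0+1}(t)$ large enough to swallow $a(\lam(t))$ is a second uncontrolled estimate.

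The paper's actual argument is of a completely different nature and never looks at the multiplier. Under the contradiction hypothesis (all singular relations along the virtual cycle persist), one builds auxiliary curves $\gamma_n:=x_n,\gamma_{n-1},\ldots,\gamma_0$ as successive pullbacks of $x_n(t)$ by the \emph{fixed} map $f=f_{\lam_0}$, and shows via quasiconformal distortion estimates (Lemmas~\ref{lem:f-1preserves}--\ref{lem:tractpb}, where persistence is used to match local degrees at each step) that $\gamma_0\asymp\gamma_n$ in the sense that $\ln|\gamma_0|$ and $\ln|\gamma_n|$ are comparable and $|\arg\gamma_0-\arg\gamma_n|=O(\ln|\gamma_n|)$. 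But $f^n(\gamma_0)=\gamma_n$ with both curves going to $\infty$ places $\gamma_0$ in a tract of the finite type map $f^n$, and the Eremenko--Lyubich growth estimate (Lemma~\ref{el-lemma3}) then forces $\ln^2|\gamma_n|+\arg^2\gamma_n$ to grow at least like $|\gamma_0|$, which is incompatible with $\gamma_0\asymp\gamma_n$.
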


Note that by definition, activity means here that there exists parameters 
arbitrarily close to $\lam_0$ for which  one of the critical points or asymptotic values in the virtual cycle does not remain in the backward orbit of $\infty$.

Let us observe how Theorem A implies that cycles cannot disappear at infinity in the finite type entire setting, hence recovering the main theorem \cite[Theorem 2]{el}. Indeed, because of the lack of poles, it is easy to see that if a cycle of period $n$ disappears at infinity, then every point of the cycle must converge to infinity (and not just one). This means that the limit virtual cycle is of the form $\infty, \ldots, \infty$.
In particular, it does not contain any critical points; Theorem A then asserts that $\infty$ itself must be an active asymptotic value for $f_{\lam_0}$. But this is impossible, since for families of finite type entire maps $\infty$ is always a \emph{passive}
asymptotic value. 

Finally, we remark that if the virtual cycle contains an active asymptotic value, then this virtual cycle is non-persistent  (as defined in Definition \ref{def:virtualcycle}). It would be interesting to know if there are examples of such limit virtual cycles in which all asymptotic values are passive.
\medskip

We now state our second result, which in a sense is a converse to Theorem A.

\begin{thmB}[Accessibility Theorem]
		Let $\{f_\lam\}_{\lam \in M}$ be a natural family of finite type meromorphic maps, and $\lam_0 \in M$ be 
		such that $f_{\lam_0}$ has a non-persistent virtual cycle  of minimal length $n+1$
\[
v(\lam_0), f_{\lam_0}(v(\lam_0)), \ldots, f^n(v(\lam_0))=\infty.
\]
		Then there is a cycle of period $n+1$ exiting the domain at $\lam_0$. Moreover, this cycle can be chosen so that its multiplier goes to zero
		as it disappears to infinity.
	\end{thmB}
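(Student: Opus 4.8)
The plan is to produce the escaping cycle directly, by closing up an orbit that starts deep inside the asymptotic tract over $v(\lambda)$ and returns near $\infty$ because $z_{n-1}(\lambda_0):=f_{\lambda_0}^{\,n-1}(v(\lambda_0))$ is a pole of $f_{\lambda_0}$. Since the statement is local at $\lambda_0$, I first restrict the family to a holomorphic disc $\Delta\ni\lambda_0$ on which a certain holomorphic function $\sigma$ with $\sigma(\lambda_0)=0$ is not identically zero: writing $z_j(\lambda):=f_\lambda^{\,j}(v(\lambda))$ and letting $p(\lambda)$ be the pole of $f_\lambda$ near $p:=z_{n-1}(\lambda_0)$, set $\sigma(\lambda):=z_{n-1}(\lambda)-p(\lambda)$ in a chart near $p$; non-persistence of the virtual cycle means precisely that $f_\lambda^{\,n}(v(\lambda))\not\equiv\infty$, i.e.\ $\sigma\not\equiv 0$. (I treat $n\ge 1$; for $n=0$, where $\infty$ itself is the asymptotic value, one argues in the same way using $\log f_\lambda$ as chart near $\infty$.) On $\Delta$ I fix two $\lambda$-holomorphic normal forms: near $p$, $f_\lambda(w)=B_\lambda(w)/(w-p(\lambda))^d$ with $B_\lambda$ holomorphic and nonvanishing and $d$ the pole order; and, near $\infty$, the uniformization of the asymptotic tract --- since $f$ is of finite type the singularity over $v$ is logarithmic, so there is a holomorphic family of conformal maps $\Psi_\lambda\colon\{\re\zeta>R\}\to U_\lambda$ onto the tract over $v(\lambda)$, normalized so that $f_\lambda(\Psi_\lambda(\zeta))=v(\lambda)+e^{-\zeta}$.

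Next I set up the orbit-closing equation. Fix $\zeta$ with $\re\zeta$ large and $\lambda\in\Delta$, put $w_0:=\Psi_\lambda(\zeta)$ and $w_{j+1}:=f_\lambda(w_j)$; then $w_1=v(\lambda)+e^{-\zeta}$. Since none of the intermediate virtual-cycle points $z_0(\lambda),\dots,z_{n-2}(\lambda)$ is a pole of $f_\lambda$, the germ $G_\lambda:=f_\lambda^{\,n-1}$ is holomorphic near $v(\lambda)$ with $G_\lambda(v(\lambda))=z_{n-1}(\lambda)$, so $w_n=G_\lambda(v(\lambda)+e^{-\zeta})=z_{n-1}(\lambda)+O(e^{-\zeta})$ sits near $p(\lambda)$; applying the pole normal form, $w_{n+1}=f_\lambda(w_n)=\bigl(b(\lambda)+O(e^{-\zeta})\bigr)/\bigl(\sigma(\lambda)+O(e^{-\zeta})\bigr)^{d}$ with $b(\lambda):=B_\lambda(z_{n-1}(\lambda))\ne 0$. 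Imposing $w_{n+1}=w_0=\Psi_\lambda(\zeta)$, taking $d$-th roots and solving for $\sigma(\lambda)$, one finds that $w_0,w_1,\dots,w_n$ is a genuine periodic orbit of period dividing $n+1$ for $f_\lambda$ whenever
\[
\sigma(\lambda)=\Theta(\lambda,\zeta):=\Bigl(\tfrac{b(\lambda)+O(e^{-\zeta})}{\Psi_\lambda(\zeta)}\Bigr)^{1/d}-O(e^{-\zeta})
\]
for a suitable holomorphic branch of the $d$-th root; here $\Theta(\cdot,\zeta)$ is holomorphic on a fixed disc $D(\lambda_0,r)\subset\Delta$, and since $\Psi_\lambda(\zeta)\to\infty$ uniformly as $\re\zeta\to+\infty$, one has $\Theta(\cdot,\zeta)\to 0$ uniformly as $\re\zeta\to+\infty$.

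To solve, choose $r$ so that $\sigma$ vanishes on $\overline{D(\lambda_0,r)}$ only at $\lambda_0$; by Rouch\'e's theorem, for $\re\zeta$ large the equation $\sigma(\lambda)=\Theta(\lambda,\zeta)$ has exactly $k:=\mathrm{ord}_{\lambda_0}\sigma$ solutions in $D(\lambda_0,r)$, all of which tend to $\lambda_0$ as $\re\zeta\to+\infty$ (because $\Theta\to 0$ and $\sigma^{-1}(0)\cap D(\lambda_0,r)=\{\lambda_0\}$). By continuity of roots one selects a continuous branch $\zeta\mapsto\lambda(\zeta)$ with $\lambda(\zeta)\to\lambda_0$; restricting to $\zeta=t\in(R',+\infty)$ gives continuous curves $t\mapsto\lambda(t)\to\lambda_0$ and $t\mapsto z(t):=\Psi_{\lambda(t)}(t)\to\infty$ with $f_{\lambda(t)}^{\,n+1}(z(t))=z(t)$: a cycle exiting the domain at $\lambda_0$ in the sense of Definition~\ref{def:exit}. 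Its period divides $n+1$ and equals $n+1$, since $z(t),f_{\lambda(t)}(z(t)),\dots,f_{\lambda(t)}^{\,n}(z(t))$ converge respectively to the $n+1$ distinct points $\infty,z_0,\dots,z_{n-1}$ and hence are pairwise distinct for $t$ large.

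It remains to control the multiplier $\rho(t)=\prod_{j=0}^{n}f_{\lambda(t)}'(w_j)$. Differentiating $f_\lambda(\Psi_\lambda(\zeta))=v(\lambda)+e^{-\zeta}$ gives $f_\lambda'(w_0)=-e^{-\zeta}/\Psi_\lambda'(\zeta)$, which is exponentially small in $\re\zeta$; the factors $f_\lambda'(w_1),\dots,f_\lambda'(w_{n-1})$ stay bounded (their arguments remain near the finite non-polar points $z_0,\dots,z_{n-2}$); and $f_\lambda'(w_n)$ grows only polynomially, because $w_n$ lies at distance $|\sigma(\lambda)+O(e^{-\zeta})|\asymp|\Psi_\lambda(\zeta)|^{-1/d}$ from the pole $p(\lambda)$. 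Invoking the standard distortion estimate for the conformal uniformization of a logarithmic tract of a finite type map --- which provides a polynomial lower bound for $|\Psi_\lambda'(\zeta)|$ and a polynomial upper bound for $|\Psi_\lambda(\zeta)|$, uniformly in $\lambda$ near $\lambda_0$ --- one concludes $|\rho(t)|\le e^{-t}\,\mathrm{poly}(t)\to 0$. The step I expect to be the main obstacle is exactly this quantitative control of the logarithmic tract (a finite type map can have infinite order, so it must come from the local geometry of logarithmic singularities rather than from global growth estimates), together with making the normal forms and the germ $G_\lambda=f_\lambda^{\,n-1}$ valid on one fixed neighbourhood of $\lambda_0$; a minor additional point is a possible splitting of the pole $p(\lambda)$ under perturbation, which is harmless since its scale goes to $0$ much faster than $|\sigma(\lambda)|$.
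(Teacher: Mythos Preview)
Your approach is correct in outline, but it differs from the paper's in two significant ways, and the step you flag as the ``main obstacle'' is precisely where the paper inserts its key lemma. First, whereas you solve an explicit orbit-closing equation $\sigma(\lambda)=\Theta(\lambda,\zeta)$ via Rouch\'e, the paper instead writes the condition $\Phi_{\lambda}\bigl(f_{\lambda}^{n}(v_{\lambda})\bigr)=-t$ and lifts the curve $t\mapsto\Phi^{-1}(-t)$ through the branched cover $G(\lambda)=\psi_\lambda^{-1}\bigl(f_\lambda^{n}(v_\lambda)\bigr)$ (using Lemma~\ref{lem:Gopendiscrete} and Remark~\ref{rem:Gbranched}); this sidesteps the $d$-th root branch and the pole-splitting bookkeeping entirely. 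Second, and more importantly, the paper does \emph{not} compute the multiplier as a product of derivatives. Instead it defines $D_t:=\Phi_{\lambda(t)}^{-1}(\D(-t,\pi))$ inside the tract and $U_t:=$ the component of $f_{\lambda(t)}^{-n}(D_t)$ containing $v_{\lambda(t)}$, and shows $f_{\lambda(t)}^{\,n+1}(U_t)\Subset U_t$ by comparing two radii at $v_{\lambda(t)}$: the image $f_{\lambda(t)}(D_t)$ has radius $\lesssim e^{-t(1-\epsilon)}$, while $U_t$ contains a disc of radius $\gtrsim |g'(-t)|^{1+\epsilon}$, where $g=\Phi^{-1}$. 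The attracting cycle and the fact that its multiplier tends to zero then come from the Schwarz lemma and the modulus of $U_t\setminus\overline{f^{n+1}(U_t)}$ tending to infinity. The only tract estimate needed is therefore $e^{-t(1-\epsilon)}/|g'(-t)|^{1+\epsilon}\to 0$, i.e.\ a \emph{subexponential} lower bound on $|g'(-t)|$; no upper bound on $|g(-t)|=|\Psi(\zeta)|$ is required.

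By contrast, your direct multiplier computation produces a factor $|f_\lambda'(w_n)|\asymp|\Psi_\lambda(\zeta)|^{(d+1)/d}$ from the pole, so you need polynomial control of both $|\Psi_\lambda(\zeta)|$ and $|\Psi_\lambda'(\zeta)|$. These are in fact true for an arbitrary simply connected tract: the hyperbolic estimate of Lemma~\ref{lem:lasse} gives $\dist(g(-t),\partial T)\in[D_1 t^{-2},D_1 t^2]$, hence $|g'(-t)|\in[C t^{-3},C t]$ and, by integration, $|g(-t)|=O(t^2)$. But this is exactly the content of Lemmas~\ref{lem:lasse} and~\ref{lem:nocusp} (attributed to Rempe), not a ``standard distortion estimate'', and you should invoke it explicitly; moreover you must check that the constants are uniform in $\lambda$ near $\lambda_0$, since you apply it along the moving family $\Psi_{\lambda(t)}$. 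With that lemma in hand your route works and is somewhat more quantitative, while the paper's Schwarz-lemma route is cleaner because it isolates the single inequality $e^{-t}/|g'(-t)|\to 0$.
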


In particular, by the definition of a virtual cycle, $v(\lam_0)$ is an asymptotic value (finite or infinite) and hence $\la_0$ is a virtual cycle parameter. In the terminology of \cite{FK}, the Accessibility Theorem  states that every virtual cycle parameter is also a {\em virtual center} (since the multiplier of the disappearing cycle is tending to $0$ at the limit parameter), and it is  accessible from the interior of a component in parameter space for which the analytic continuation of this cycle remains attracting. This proves the main conjecture in \cite{FK} in much greater generality than originally stated.
 
Putting together Theorems A and B, we obtain the following immediate corollary.

\begin{corC}
	Let $\{f_\lam\}_{\lam \in M}$ be a natural family of finite type meromorphic maps, and assume that 
	this family does not have any persistent virtual cycle. Let $\lam_0 \in M$.
	Then a cycle disappears to infinity at $\lam_0$ if and only if
	$f_{\lam_0}^n(v(\lam_0))=\infty$ for some asymptotic value $v(\lam_0)$.
\end{corC}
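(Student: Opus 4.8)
The plan is to obtain Corollary C as a formal consequence of Theorem A and Theorem B; no new analysis is needed, and the only thing to be careful about is the combinatorial bookkeeping attached to Definition \ref{def:virtualcycle}.

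For the implication ``$\Leftarrow$'', I would start from an asymptotic value $v(\lam_0)$ of $f_{\lam_0}$ with $f_{\lam_0}^n(v(\lam_0))=\infty$ for some $n\ge 0$, and first replace $n$ by the least such integer. Then $v(\lam_0), f_{\lam_0}(v(\lam_0)), \ldots, f_{\lam_0}^{n-1}(v(\lam_0))$ all lie in $\C$, so that
\[
v(\lam_0),\ f_{\lam_0}(v(\lam_0)),\ \ldots,\ f_{\lam_0}^n(v(\lam_0))=\infty
\]
is a virtual cycle of minimal length $n+1$ in the sense of Definition \ref{def:virtualcycle} (the degenerate case $n=0$, where $v(\lam_0)=\infty$ is itself an asymptotic value and the virtual cycle is $(\infty)$ of length $1$, is included). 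Since by hypothesis the family $\{f_\lam\}_{\lam\in M}$ has \emph{no} persistent virtual cycle, this particular virtual cycle is in particular non-persistent, so $\lam_0$ is a virtual cycle parameter and the hypotheses of Theorem B are met verbatim. Theorem B then yields a cycle of period $n+1$ disappearing to infinity at $\lam_0$ (with multiplier tending to $0$, although only the disappearance is needed here).

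For the implication ``$\Rightarrow$'', suppose a cycle of some period disappears to infinity at $\lam_0$. By Theorem A this cycle converges to a virtual cycle $z_0, z_1, \ldots, z_{m-1}$ of $f_{\lam_0}$. Now I would argue purely from Definition \ref{def:virtualcycle}: by definition some $z_i=\infty$, and then $z_{i+1}$ (indices read cyclically) is an asymptotic value of $f_{\lam_0}$; call it $v(\lam_0)$ (it may be $\infty$ itself, in which case we are already done with $n=0$). Following the cyclic order starting from $v(\lam_0)=z_{i+1}$, each successive entry is either a finite point mapped by $f_{\lam_0}$ to the next entry, or $\infty$; since the set is finite and cyclically ordered, iterating $f_{\lam_0}$ from $v(\lam_0)$ must reach $\infty$ after finitely many steps, i.e.\ $f_{\lam_0}^n(v(\lam_0))=\infty$ for some $n\ge 0$. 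This is exactly the stated conclusion. (One may additionally note that in a natural family singular values move holomorphically, so $v(\lam_0)$ is the value at $\lam_0$ of a holomorphically varying asymptotic value, though the statement as phrased only asks for an asymptotic value of $f_{\lam_0}$.)

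The hard part here is essentially nonexistent: all the analytic difficulty is absorbed by Theorems A and B. The only genuine care points are the minimality reduction in the ``$\Leftarrow$'' direction, needed so that we feed Theorem B a virtual cycle that is actually of minimal length, and the observation in the ``$\Rightarrow$'' direction that the virtual cycle produced by Theorem A need not be of minimal length a priori — but since we only need to exhibit one asymptotic value eventually mapping to $\infty$, the cyclic-finiteness argument above handles this without any further input.
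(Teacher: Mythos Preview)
Your argument is correct and matches the paper's intent: the paper simply states that Corollary C follows immediately from putting together Theorems A and B, and you have spelled out precisely that derivation. One minor remark: your step ``replace $n$ by the least such integer'' in the $\Leftarrow$ direction is in fact automatic, since for a transcendental meromorphic $f$ the expression $f_{\lam_0}^n(v(\lam_0))$ is only defined when $f_{\lam_0}^k(v(\lam_0))\in\C$ for all $k<n$; so the given $n$ is already minimal and the virtual cycle has minimal length $n+1$ as required by Theorem B.
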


Up to this point we have described the new type of bifurcation that occurs in the presence of poles and asymptotic values. Observe that this phenomenon is {\em a priori} unrelated to the collision of periodic orbits forming parabolic cycles, in contrast to what occurs for rational or entire maps for which this is the only possible obstruction for the holomorphic motion of the Julia set. Our next result shows that,  nevertheless,  when an attracting cycle disappears at some parameter value, this can be approximated by parabolic parameters.

\begin{thmD}[Approximation by parabolic parameters]
	Let $\{f_\lam\}_{\lam \in M}$  be a natural family of finite type
	meromorphic maps, and assume that an attracting cycle of period $n$ disappears to infinity at $\lam_0 \in M$. Then there exists a
	sequence $\lam_k \to \lam_0$ such that $f_{\lam_k}$ has a non-persistent parabolic cycle
	of period at most $n$.
\end{thmD}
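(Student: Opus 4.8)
The plan is to reduce Theorem D to the two previous results via a normal families / holomorphic motions argument. Suppose an attracting cycle of period $n$ disappears to infinity at $\lam_0$. By Theorem A, the limiting virtual cycle contains an active asymptotic value $v(\lam)$ or an active critical point $c(\lam)$; call it $s(\lam)$. First I would fix a point $\lam_1$ very close to $\lam_0$ at which $s$ is still active, so that the orbits $\{\lam \mapsto f_\lam^k(s(\lam))\}_k$ fail to be normal on every neighborhood of $\lam_1$. The idea is that near such a parameter one can force a singular orbit to be periodic (parabolic), by a Montel-type argument: non-normality of the family of iterates of a marked point, together with the fact that periodic points of a given large period are "almost everywhere" in the dynamical plane, typically lets one solve the equation $f_\lam^p(s(\lam)) = (\text{a periodic point of period } q)$ for parameters accumulating at $\lam_1$, and hence at $\lam_0$.

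More concretely, I would argue as follows. Consider the three "bad" values: for the iterates $g_k(\lam) := f_\lam^k(s(\lam))$ to be a normal family one needs them to avoid (locally uniformly) at least three moving points; here the relevant moving targets are repelling periodic points $w_1(\lam), w_2(\lam), w_3(\lam)$ of some common period $q$, which exist and move holomorphically on a small disk $D$ around $\lam_1$ (repelling cycles persist under perturbation, and by Theorem A / the hypothesis there are infinitely many periods available). By Montel's theorem, since $\{g_k\}$ is not normal on $D$, there is some $k$ and some $\lam \in D$ with $g_k(\lam) = w_j(\lam)$ for one of the $j$'s. Then $s(\lam)$ lands after $k$ steps on a repelling cycle — this is not yet parabolic. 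To get a parabolic parameter one instead perturbs: one shows that the set of parameters in $D$ for which $s(\cdot)$ is eventually periodic (prerepelling) is "large", and inside its closure, using a standard argument (as in the proof of density of Misiurewicz or of parabolic parameters in the rational setting, going back to Montel + the implicit function theorem applied to the multiplier map), one finds $\lam_k \to \lam_1$ with a genuinely parabolic cycle. Since $\lam_1$ can be taken as close as we like to $\lam_0$, a diagonal argument produces $\lam_k \to \lam_0$ with $f_{\lam_k}$ having a parabolic cycle. The bound on the period (at most $n$) should come from tracking the period of the disappearing cycle: the parabolic cycle produced is the one into which a perturbation of the original period-$n$ cycle collides, so its period divides, or is at most, $n$; this requires relating the marked singular point $s(\lam)$ back to the $n$-cycle, which is exactly the content of the virtual cycle structure from Theorem A.

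I would be careful about two technical points. First, normality must be understood in the spherical sense (values in $\chat$), since the marked orbits can and do pass through $\infty$ (poles); one must check that "passing through $\infty$" is compatible with the Montel argument — this is where the finite type hypothesis and the structure of asymptotic tracts enter, ensuring that the branches of inverses used to pull back repelling cycles stay controlled near $\infty$. Second, one must make sure the parabolic cycles obtained are \emph{non-persistent}: a parabolic cycle that persisted throughout $M$ would correspond to a persistent relation, which can be excluded (or else the conclusion is interpreted accordingly) — in fact the multiplier of the perturbed cycle is non-constant precisely because the singular value $s$ is active, so the multiplier map $\lam \mapsto \rho(\lam)$ is a non-constant holomorphic function and hits roots of unity on a set accumulating at $\lam_1$; each such parameter gives a non-persistent parabolic cycle.

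The main obstacle I expect is making the "parabolic, not merely prerepelling" step rigorous while simultaneously controlling the period. The clean Montel argument only gives prerepelling (or pre-periodic-to-repelling) parameters; upgrading to parabolic parameters requires either (i) a direct argument that the multiplier map of the analytically-continued disappearing cycle is non-constant and therefore assumes values that are roots of unity arbitrarily close to $\lam_0$ — this is appealing because Theorem B already tells us the multiplier tends to $0$ along the disappearing curve, so the multiplier map is genuinely non-constant near $\lam_0$ — or (ii) a perturbation argument near a prerepelling parameter. Option (i) seems most promising: combine Theorem A (activity forces the multiplier of the continued cycle to be non-constant, equivalently the cycle is not persistently attracting/indifferent of fixed multiplier) with the open mapping theorem to conclude that $\{\lam : \rho_n(\lam) \in e^{2\pi i \Q}\}$ accumulates at $\lam_0$; the period-$n$ bound is then automatic since we are using the continuation of the original $n$-cycle itself. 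The remaining subtlety is ensuring the multiplier map is well-defined and holomorphic on a punctured neighborhood of $\lam_0$ even though the cycle escapes to $\infty$ — i.e. that the cycle can be followed as a holomorphic (multivalued) branch up to the bifurcation — which should follow from the construction of the exiting cycle in the proof of Theorem B together with the finite type hypothesis.
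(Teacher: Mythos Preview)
Your option (i) — following the multiplier of the analytically continued disappearing cycle and invoking the open mapping theorem — is close in spirit to the paper's setup, but the punchline you propose does not work, and this is the genuine gap.

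First, Theorem B does not say that the multiplier of \emph{your} disappearing cycle tends to $0$; it \emph{constructs} a (possibly different) cycle at a virtual cycle parameter with that property. So you cannot cite it to force the multiplier map $\rho$ to be non-constant, let alone to approach $\partial\D$.

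Second, and more seriously: suppose you do arrange, as the paper does, to assume for contradiction that there are no non-persistent parabolic cycles of period $\le n$ near $\lam_0$, reduce to a one-dimensional disk $D\ni\lam_0$, and pass to a finite branched cover so that the periodic point $z(u)$ and hence $\rho(u)=(f_{\lam(u)}^n)'(z(u))$ become single-valued holomorphic on $\D^*$. Under the contradiction hypothesis every continuation of the cycle stays attracting, so $\rho(\D^*)\subset\D$. By the removable singularity theorem $\rho$ extends across $0$, and the open mapping theorem then forces $\rho(0)\in\D$. No contradiction arises and $\rho$ never meets $\partial\D$. The multiplier is simply too coarse an invariant here.

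The paper uses the same reduction (no parabolics $\Rightarrow$ $\pi_1:S\to D^*$ is a finite covering $\Rightarrow$ after a branched cover, $u\mapsto z(u)$ is single-valued with a pole at $0$) but a completely different, geometric punchline. One looks at the \emph{position} of the periodic point rather than its multiplier: the map $G(u):=\psi_{\lam(u)}^{-1}(z(u))$ is open and discrete near $0$ by Lemma~\ref{lem:Gopendiscrete}, so $G(\D)$ is a full neighborhood of $\infty$. On the other hand $f\circ G(u)=\phi_{\lam(u)}^{-1}\big(f_{\lam(u)}(z(u))\big)\to v$, the asymptotic value following $\infty$ in the virtual cycle, so $G(\D)$ lies in a single tract $T$ of $f$ above $v$. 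A tract is simply connected and cannot contain a punctured neighborhood of $\infty$: contradiction. This openness-versus-tract argument is the missing idea. Your first approach via activity and prerepelling parameters, as you yourself note, cannot deliver the period bound $\le n$.
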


In particular, by Theorem B, this happens at every virtual cycle parameter. Moreover, using other approximation results in Section \ref{sect:bifloc}, it follows from Theorem D that parabolic parameters are actually dense in the activity locus (Corollary \ref{cor:density parabolic parameters}).

We now state our last main result,  which is an extension of Mañe-Sad-Sullivan's and Lyubich's bifurcation theory in the setting of finite type meromorphic maps. We stress here the fact that the proof relies in a crucial way on both Theorems A and B.

\begin{thmE}[Characterizations of $\JJ$-stability]
		Let $\{f_\lam\}_{\lam \in M}$ be a natural family of finite type meromorphic maps.
		Let $U \subset M$ be a simply connected domain of parameters. The following are equivalent:
		\begin{enumerate}[\rm (1)]
				\item The Julia set moves holomorphically over $U$ (i.e. $f_\la$ is $\JJ$-stable for all $\la\in U$)
				\item Every singular value is passive on $U$.
				\item  The maximal period of attracting cycles is bounded on $U$.
				\item  The number of attracting cycles is constant in $U$.
				\item  For all $\la\in U$, $f_\la$ has no non-persistent parabolic cycles.
		\end{enumerate}
	\end{thmE}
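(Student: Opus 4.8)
The plan is to prove the cycle of implications $(1)\Rightarrow(2)\Rightarrow(3)\Rightarrow(4)\Rightarrow(5)\Rightarrow(1)$, following the classical Ma\~n\'e--Sad--Sullivan/Lyubich scheme but inserting the new ingredients (Theorems A, B, D) precisely where poles and asymptotic values break the old arguments. The implication $(1)\Rightarrow(2)$ is essentially formal: if the Julia set $\JJ(f_\la)$ moves holomorphically over $U$, then in particular no singular value can cross $\JJ(f_\la)$ transversally, and a standard normality argument (together with the fact that orbits of singular values staying in the Fatou set, or being eventually $\infty$, stay in a normal family) shows each $v(\la)$ is passive; one must be slightly careful to use the definition of passivity that allows the orbit to be identically $\infty$. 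For $(3)\Rightarrow(4)$ and the reverse direction $(4)\Rightarrow(3)$ we use the standard fact that, since there are only finitely many singular values (say $q=\#S(f)$), the number of non-repelling cycles is at most a constant depending only on $q$ (the Fatou--Shishikura-type bound for finite type maps), so "constant number'' and "bounded period'' control each other via a semicontinuity/counting argument over the connected domain $U$.

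The heart of the matter is $(2)\Rightarrow(3)$ and $(5)\Rightarrow(1)$. For $(2)\Rightarrow(3)$: suppose all singular values are passive on $U$ but the maximal period of attracting cycles is unbounded. Each attracting cycle attracts a singular value, so by passivity the corresponding orbit $\la\mapsto f_\la^n(v(\la))$ is a normal family on $U$; passing to a limit, this forces infinitely many attracting cycles to coexist or a singular orbit to be non-normal, contradicting passivity unless the attracting cycles are "escaping to infinity''. This is exactly where Theorem A enters: if a sequence of attracting cycles of period $\to\infty$ were to appear, one extracts a cycle disappearing to infinity along a parameter curve, and Theorem A produces an \emph{active} critical point or asymptotic value in the limiting virtual cycle, contradicting $(2)$. (One also uses Theorem D / the approximation results of Section \ref{sect:bifloc} to rule out accumulation of parabolic-then-attracting cycles.) Conversely $(5)\Rightarrow(1)$ is the $\lambda$-lemma step: assuming no non-persistent parabolic cycles on $U$, one shows every repelling periodic point moves holomorphically over $U$ — the only obstruction to analytic continuation of a repelling cycle is a collision creating a parabolic cycle (which is excluded) \emph{or} the cycle exiting the domain; but by Theorem B a cycle exiting the domain forces a non-persistent virtual cycle, whose asymptotic value $v(\la)$ is then active, and activity of a singular value can in turn be shown (again via Theorems A, B and the density statement following Theorem D) to produce a non-persistent parabolic parameter in $U$, contradicting $(5)$. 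Holomorphic motion of the repelling cycles, which are dense in $\JJ$, then extends by the $\lambda$-lemma to a holomorphic motion of $\JJ(f_\la)$ conjugating the dynamics, giving $(1)$.

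The main obstacle, and the step where the new methods of the paper are indispensable, is closing the loop at $(5)\Rightarrow(1)$ (equivalently, ruling out cycles exiting the domain under a purely ``internal'' hypothesis on parabolic parameters): in the rational or entire case one simply never has cycles leaving the domain, so the $\lambda$-lemma applies directly, whereas here one must \emph{use} Theorems A, B and D to convert the a priori unrelated phenomenon of disappearance-at-infinity into the familiar language of active singular values and parabolic parameters. A secondary technical point to handle carefully is that $\JJ(f_\la)$ for meromorphic $f$ is best taken as the closure of the repelling periodic points (or of the escaping-to-$\infty$/prepole set), and one must check the holomorphic motion obtained on this dense set is equivariant and extends continuously; this is routine given the $\lambda$-lemma but should be stated. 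I would also record explicitly that because all main results are local in $M$, the simple connectedness of $U$ is used only to globalize the motion (monodromy of the marked repelling points is trivial), exactly as in \cite{mss,lyu1}.
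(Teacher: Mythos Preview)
Your route differs from the paper's, and there is a real gap at $(3)\Rightarrow(4)$. The paper does not attempt $(3)\Rightarrow(4)$ directly; it proves $(1)\Leftrightarrow(2)$, then $(1)\Rightarrow(3)\Rightarrow(2)\Leftrightarrow(5)\Rightarrow(4)\Rightarrow(3)$. The key original step is the \emph{contrapositive} $\neg(2)\Rightarrow\neg(3)$: an active singular value yields, via Proposition~\ref{prop:density_of_sp}, truncated parameters of orders $n_k\to\infty$, and then \emph{Theorem~B} (for virtual cycle parameters) or Proposition~\ref{prop:acc_centers} (for critical prepole parameters) produces attracting cycles of period $\to\infty$. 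So Theorem~B is used precisely here, to manufacture attracting cycles of large period from activity; it is not used in $(5)\Rightarrow(1)$ as you write. Your sentence ``by Theorem~B a cycle exiting the domain forces a non-persistent virtual cycle'' has the theorems swapped---that direction is Theorem~A.

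The gap in your $(3)\Rightarrow(4)$ is that the Fatou--Shishikura-type bound only shows the number of attracting cycles is \emph{bounded} by $\#S(f)$, not \emph{constant} on $U$. Bounded maximal period alone does not prevent an attracting cycle from becoming parabolic or from exiting the domain somewhere in $U$; either event changes the count. Ruling these out is exactly the substantive content, and the paper does it by first passing through $(2)$ and $(5)$. Your $(2)\Rightarrow(3)$ sketch is likewise off: from attracting cycles of period $\to\infty$ at \emph{different} parameters you cannot ``extract a cycle disappearing to infinity along a parameter curve'' (they live on different hypersurfaces $P_n$), so Theorem~A does not apply there. A correct direct $(2)\Rightarrow(3)$ does exist---it is essentially the argument later given for Corollary~G (a passive $v$ captured by an attracting cycle of period $p$ at one parameter is, by normality of the iterates, captured by the holomorphic continuation of that same cycle throughout $U$, hence never by a cycle of period $>p$)---but that uses neither Theorem~A nor any ``escape to infinity''. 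Finally, you never address $(4)\Rightarrow(5)$; it is easy (at a non-persistent parabolic parameter the cycle in question is not attracting, so the count drops there), but it must be stated for your cycle to close.
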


\begin{figure}[hbt!]
\includegraphics[width=0.7\textwidth]{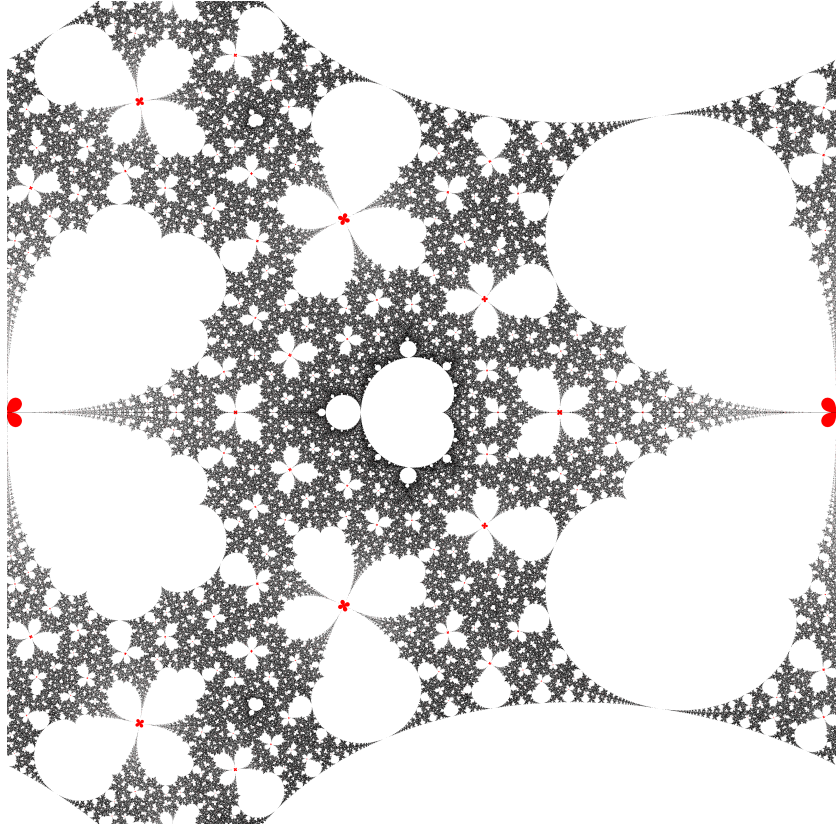}
\caption{\small \label{fig:sp} Bifurcation locus of the natural family $f_\lam(z)=\pi \tan^2(z)+\lam$.
These maps have only one critical value and one asymptotic value ($\lam$ and $\lam-\pi$ respectively), which are then both mapped to the same point. Centers of hyperbolic components and Misiurewicz parameters are shown in black. Parameters for which there exist an attracting cycle are shown in white.  Virtual cycle parameters are at the center of the red crosses.}
\end{figure}

In view of Theorem E it makes sense to define the {\em bifurcation locus} of the natural family as 
\[
\bif(M)=\{\la \in M\mid f_\la \text{ is not $\JJ$-stable} \},
\]
or equivalently as the set of parameters for which some of the conditions in Theorem E is not satisfied. Since $\JJ-stable$ parameters form an open set by definition, following the arguments in \cite{mss} we obtain the  following statement, so well known for rational maps. 

\begin{corF}[$\JJ$- stable parameters form an open and dense set in $M$] 
If $\{f_\lam\}_{\lam \in M}$ is a natural family of finite type meromorphic maps, then $\bif(M)$ has no interior or, equivalently, $\JJ-$stable parameters are open and dense in $M$. 
\end{corF}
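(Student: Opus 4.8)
The plan is to deduce Corollary F directly from Theorem E by the same soft argument used in \cite{mss}, namely that $\JJ$-stable parameters are characterized by a \emph{local} condition whose failure is a closed set with empty interior. Since $\JJ$-stability is open by definition, it suffices to show density of the $\JJ$-stable locus, equivalently that $\bif(M)$ has empty interior. Suppose for contradiction that $\bif(M)$ contains a nonempty open set $U$; shrinking it, we may assume $U$ is simply connected. By Theorem E, $J$-stability on $U$ is equivalent to passivity of every singular value on $U$, so if $U \subset \bif(M)$ then some singular value $v(\la)$ is active at every point of $U$.

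First I would recall (from the discussion preceding Definition \ref{defn:active singular values} and the standard Montel-type dichotomy) that the passivity locus of a given marked singular value is open, and that its complement -- the activity locus of that singular value -- is precisely where the family $\{\la \mapsto f_\la^n(v(\la))\}_{n}$ fails to be normal (taking into account the escaping-to-$\infty$ branch of Definition \ref{defn:active singular values}). The key classical input is that activity of a holomorphically moving point forces, arbitrarily nearby, parameters where that point lands on a repelling periodic orbit, or where periodic behavior changes; in particular activity is never a fully open phenomenon persisting on an open set together with \emph{all} its consequences. More concretely, I would use the contrapositive of the $(1)\Leftrightarrow(2)$ and $(1)\Leftrightarrow(3)$ equivalences in Theorem E together with the openness of each passivity locus: on the nonempty open simply connected set $U$, if every singular value were active we could still pass to a smaller simply connected $U' \subset U$ on which the finitely many singular values split into those that are ``eventually $\infty$'' and the rest; normality arguments on $U'$ for the remaining ones then contradict activity unless the period of attracting cycles is unbounded on $U'$.

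The cleanest route, and the one I would actually write, is: since $\bif(M)$ is by Theorem E the union over the finitely many marked singular values $v_j$ of their individual activity loci $\mathrm{Act}(v_j)$, and each activity locus is closed with empty interior (a single holomorphically moving point cannot be active on a nonempty open set -- on such a set the iterates $f_\la^n(v_j(\la))$ would have to be a normal family, using Montel after removing three persistently omitted values coming from a fixed repelling cycle, exactly as in \cite{mss,lyu1}), a finite union of closed nowhere dense sets is closed and nowhere dense by Baire. Hence $\bif(M)$ has empty interior, which is the assertion.

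The main obstacle is justifying that each individual activity locus $\mathrm{Act}(v_j)$ has empty interior in the \emph{meromorphic} setting: the classical Montel argument needs three values omitted by all the iterates on a putative open set of activity, and here orbits may hit poles and run off to $\infty$, so one must either invoke the escaping branch (1) of Definition \ref{defn:active singular values} to reduce to the case where $f_\la^n(v_j(\la))$ stays finite and bounded away from $\infty$ on the open set, or feed in a persistent repelling cycle (available after shrinking, since attracting-cycle count would otherwise force $J$-stability by Theorem E $(4)$) to supply the omitted values. Once that local nowhere-density is in hand, the passage to the global statement is the routine Baire-category and openness argument of \cite{mss}, and I would simply cite it.
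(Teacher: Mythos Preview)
Your strategy of writing $\bif(M)$ as a finite union of closed activity loci $\mathcal{A}(v_j)$ and showing each has empty interior is sound in outline, but the justification you give for the key step is wrong. You claim that on an open $U \subset \mathcal{A}(v_j)$ the iterates $\lam \mapsto f_\lam^n(v_j(\lam))$ ``would have to be a normal family, using Montel after removing three persistently omitted values coming from a fixed repelling cycle.'' This is backwards: points of a repelling cycle are \emph{not} omitted by these iterates---non-normality forces the iterates to \emph{hit} any three prescribed moving points for suitable $\lam \in U$, and landing on a repelling periodic point yields a Misiurewicz parameter that is still active. No contradiction arises this way. (You may be conflating this with the argument in the proof of $(1)\Rightarrow(2)$ of Theorem~E, where three Julia points avoided by the singular orbits exist \emph{because} the holomorphic motion is already assumed; that is unavailable here.) To prove $\mathcal{A}(v_j)$ has empty interior one must actually produce, inside any such $U$, a parameter where $v_j$ lies in an attracting basin. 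For a critical value this comes from Propositions~\ref{prop:density_of_sp} and~\ref{prop:acc_centers} (land on a critical prepole, then perturb to a center); but for an \emph{asymptotic} value there is no critical preimage to aim at, and one genuinely needs Theorem~B: first find a virtual cycle parameter in $U$ via Proposition~\ref{prop:density_of_sp}, then Theorem~B supplies the nearby attracting cycle whose basin contains $v_j$. So your route requires exactly the same machinery as the paper's proof and is not the soft Montel shortcut you describe.

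The paper's argument is the classical iterative one: by density of parabolic parameters (Corollary~\ref{cor:density parabolic parameters}, which itself rests on Theorems~A, B and~D), perturb any $\lam_0 \in \bif(M)$ to a non-persistent parabolic parameter, then further to one where some singular value is captured by an attracting cycle; since capture by an attracting basin is an open condition, repeat the perturbation on the remaining active singular values while keeping the already-captured ones captured. After at most $\#S(f)$ steps all singular values are passive, giving a $J$-stable parameter arbitrarily close to $\lam_0$.
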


Our last Corollary gives meaning to the notion of hyperbolic components in our setting.

\begin{corG}
	If $\{f_\lam\}_{\lam \in M}$ is a natural family of finite type meromorphic maps, and $U$ is a connected component of $M \setminus \bif(M)$ containing a parameter $\lam_0$ such that all singular values of $f_{\lam_0}$ are captured by attracting cycles, then the same holds for every $\lam \in U$. 
\end{corG}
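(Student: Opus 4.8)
The plan is to deduce Corollary G from Theorem E together with the characterization of $\JJ$-stability via passive singular values and a holomorphic-motion argument. First I would fix the component $U \subset M \setminus \bif(M)$ and the distinguished parameter $\lambda_0 \in U$ for which every singular value $v(\lambda_0)$ is attracted to an attracting cycle. By Theorem E applied to a simply connected neighborhood (or, since $U$ may not be simply connected, by passing to the universal cover $\widetilde U \to U$, over which a holomorphic motion of the Julia set is defined and descends appropriately), every singular value is passive throughout $U$; in particular no singular value can accumulate on $\infty$ along the orbit in a non-persistent way, and no singular value can be active. The heart of the argument is then to upgrade \emph{passivity} of each marked singular value $v(\lambda)$ to the stronger statement that $v(\lambda)$ stays captured by an attracting cycle.

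The key steps, in order, would be as follows. \textbf{Step 1.} Show that the attracting cycles capturing the singular values of $f_{\lambda_0}$ admit holomorphic continuations $\lambda \mapsto a_j(\lambda)$ as attracting cycles over all of $U$: this follows because their multipliers $\rho_j(\lambda)$ are holomorphic functions on $U$ with $|\rho_j(\lambda_0)| < 1$, and the set where $|\rho_j| < 1$ is open; one must rule out that $|\rho_j|$ reaches $1$ somewhere in $U$, which would produce a non-persistent parabolic cycle, contradicting condition (5) of Theorem E. One must also rule out the cycle exiting the domain, which by Theorem A would force an active singular value, contradicting condition (2). So each $a_j(\lambda)$ is a genuine attracting cycle for all $\lambda \in U$. \textbf{Step 2.} For each singular value $v(\lambda)$, let $U_v \subset U$ be the set of parameters for which $v(\lambda)$ lies in the immediate basin of one of the $a_j(\lambda)$. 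This set is clearly open (attracting basins are open and vary continuously), nonempty (it contains $\lambda_0$), and I claim it is closed in $U$: the obstruction to a singular value entering or leaving a basin of a \emph{persistently attracting} cycle is precisely a bifurcation, i.e. activity of $v$, which is excluded on $U$ by Theorem E(2). Then by connectedness $U_v = U$. \textbf{Step 3.} Since the total number of attracting cycles is constant on $U$ by Theorem E(4), no attracting cycle can appear or disappear, so the $a_j(\lambda)$ are \emph{all} the attracting cycles of $f_\lambda$; hence "all singular values captured by attracting cycles" is preserved.

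The main obstacle I expect is Step 2 — making rigorous the claim that $U_v$ is closed, i.e. that a passive singular value cannot migrate out of (or accumulate on the boundary of) the immediate basin of a persistently attracting cycle. The natural approach is via normality: on $U$ the family $\{\lambda \mapsto f_\lambda^n(v(\lambda))\}_n$ is normal (passivity), and on a neighborhood of $\lambda_0$ it converges to the attracting cycle; if $\lambda_* \in \partial U_v \cap U$, one extracts a locally uniform limit of a subsequence of iterates and argues that near $\lambda_*$ the orbit of $v$ must still converge to the continuation of the same cycle $a_j$, because the Fatou component containing $v(\lambda)$ moves holomorphically (by $\JJ$-stability and the $\lambda$-lemma, the boundary of the immediate basin moves continuously, so the basin cannot suddenly shrink away from $v$). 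One subtlety to handle carefully is that a passive singular value could in principle be captured by a \emph{different} attracting cycle on the other side of $\lambda_*$, or land in a Siegel disk / Herman ring whose center or boundary is marked; but the hypothesis of Theorem E(4) on constancy of the number of attracting cycles, together with the fact that on a $\JJ$-stable component the Fatou components are in holomorphic-motion correspondence, rules out such a transition — a singular value in a basin at $\lambda_0$ stays in the (moving) basin, and cannot jump to a rotation domain or to $\infty$ without activity. Once Step 2 is settled, Steps 1 and 3 are short and the corollary follows.
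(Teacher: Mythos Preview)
Your plan is essentially correct, but it takes a considerably longer route than the paper's proof, and much of the scaffolding you build (Steps 1 and 3) turns out to be unnecessary.

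The paper's argument is a direct normality-plus-identity-theorem argument applied to a single singular value at a time. Fix $v(\lam)$ and let $m$ be the period of the attracting cycle capturing it at $\lam_0$. Set $h_n(\lam):=f_\lam^n(v(\lam))$. Near $\lam_0$, the subsequence $h_{km}$ converges to a holomorphic function $h(\lam)$ which is an attracting periodic point. By Theorem E, $v(\lam)$ is passive on all of $U$, so $\{h_{km}\}_k$ is normal on $U$; any subsequential limit agrees with $h$ near $\lam_0$, hence by the identity theorem agrees with $h$ on all of $U$. Thus $h_{km}\to h$ on $U$, and $f_\lam^m(h(\lam))=h(\lam)$ identically. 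Finally, since the orbit of $v(\lam)$ converges to $h(\lam)$, the multiplier $\rho(\lam)=(f_\lam^m)'(h(\lam))$ satisfies $|\rho|\le 1$ on $U$; the open mapping theorem then forces $\rho(U)\subset\D$. That is the whole proof.

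Notice what this buys: there is no need to continue the attracting cycle in advance (your Step 1), no need to invoke conditions (4) or (5) of Theorem E, no open-closed argument on $U_v$, and no appeal to holomorphic motion of Fatou components. The periodic point $h(\lam)$ is \emph{produced} as the limit of the iterates of $v(\lam)$, and its attractiveness follows from the open mapping theorem rather than from ruling out parabolics. Your Step 2 sketch does contain the germ of this idea (``one extracts a locally uniform limit of a subsequence of iterates\ldots''), but you frame it as proving closedness of $U_v$ and then reach for the heavier tool of moving Fatou components, which is delicate (the extension of the holomorphic motion of $J$ to the Fatou set need not conjugate the dynamics). The paper's direct argument sidesteps all of that.
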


%
%
%
%

\subsection*{Structure of the paper} In Section~\ref{sec:prel} we introduce the concepts of tracts, holomorphic and natural families, holomorphic motions and  $\JJ$-stability. We then  show that holomorphic motions of singular values and their preimages imply that a holomorphic family is (locally) natural (Theorem \ref{thm:natural}). We also prove a 'shooting lemma' (Proposition~\ref{pseudomontel}) which replaces the use of Montel's Theorem when activity of a  singular value is due to truncation of its orbits. 
 Section \ref{sec:exiting} connects cycles disappearing to infinity to  virtual cycles, and the latter to active singular values, proving Theorem A. Section \ref{sect:Accessibility} shows that virtual cycles are the  limits of attracting cycles tending to infinity on which the  multiplier tends to zero, making them 'virtual centers' and proving Theorem B.  Section \ref{sect:bifloc}  contains theorems about the density of several types of parameters in the bifurcation locus of natural families, including the proof of Theorem D.  Additionally,   all the elements are assembled to  prove Theorem E, the meromorphic analogue of Ma\~n\'e-Sad-Sullivan and Lyubich results, and its corollaries.  Finally, Section \ref{bifdense} 
gives a simple example showing that, without the finite type assumption, the bifurcation locus may be the entire complex plane. 
 

\subsection*{Acknowledgements} 

We would like to thank Lasse Rempe for helpful discussions, and especially for pointing out to us the statement and proof of  Lemma~\ref{lem:lasse}. We are also grateful to Sarah Koch, Dylan Thurston and Lukas Geyer for interesting conversations and specially to Curt McMullen for his thoughtful comments and suggestions.
  Finally we are grateful to Bob Devaney, Linda Keen and Janina Kotus for motivating discussions and for their introduction to this beautiful subject.


\section{Preliminaries}\label{sec:prel}

In this section we state some known results and prove several new tools that will be useful in the proofs of the main theorems.

\subsection{Dynamics, asymptotic values and logarithmic tracts}\label{subsec:asympv}
\ 

 Given a rational or entire function $f$, the {\em Fatou set} $\FF(f)$ or {\em stable set} of $f$ is defined as the largest open set where the family of iterates $\{f^n\}_{n\geq 0}$ is normal in the sense of Montel. However, if $f:\C \to \chat=\C\cup\{\infty\}$ is a meromorphic (transcendental) map with at least one non-omitted pole, we need to require additionally that the family of iterates $\{f^n\}_{n\geq0}$  is first well defined {\em and} then normal.  In both cases, the {\em Julia set} is the complement of the Fatou set and it is  the closure of the repelling periodic points. If the backwards orbit of infinity $\mathcal{O}^-({\infty})$ is an infinite set, the Julia set also coincides with its closure, i.e.  $\JJ(f)=\overline{\mathcal{O}^-({\infty}})$, or equivalently the closure of the set of prepoles of all orders.  For background on iteration theory of meromorphic maps we refer to the survey \cite{bergweiler} and references therein.

The dynamics of $f$ are determined to a large extent by the dynamics of its  {\em singular values} or points  $v\in\chat$ for which not all univalent branches of $f^{-1}$ are locally well defined. If $f$ is rational, singular values are always {\em critical values} $v=f(c)$, being $c$ a {\em critical point} (i.e. $f'(c)=0$). If $f$ is transcendental we must also take {\em asymptotic values} into account, that is values $v=\lim_{t\to\infty} f(\gamma(t))$ where $\gamma$ is a curve such that $\gamma(t)\to \infty$ 
as $t\to\infty$. An example is $v=0$ for the exponential map.  

We say that $f$ is  of {\em finite type}, if it has a finite number of singular values, forming the set
\[
S(f)=\{ v\in \chat\mid \text{$v$ is a critical or an asymptotic value of $f$}\}.
\]
Maps of finite type possess specific dynamical properties which are not satisfied in the  general cases. For one, their Fatou set is made exclusively of preperiodic or periodic components, the latter being basins of attraction of attracting or parabolic orbits, or rotation domains (Siegel disks or Herman rings).

In the terminology of \cite{BE08}, asymptotic values always have transcendental singularities lying over them. More precisely, we have the following definition.

\begin{defi}[Tracts]\label{defi:tracts}
	Let $f: \C \to \hat{\C}$ be a meromorphic map,  let $v \in \hat{\C}$ be an asymptotic value of $f$, and $D$ be a punctured disk centered at $v$ of radius $r>0$. We say that a simply connected unbounded set $T$ is a \emph{logarithmic tract above $v$} if $f:T\ra D $ is an  infinite degree unbranched covering.
\end{defi}
Notice that by decreasing the value of the radius $r$, we obtain a nested sequence of logarithmic tracts shrinking to infinity. We say that two logarithmic tracts (or sequences thereof) are different, if they are disjoint for sufficiently small values of $r$. The number of different logarithmic tracts lying over an asymptotic value $v$ is the {\em multiplicity} of $v$.

If $v$ is an isolated asymptotic value one can see that there is always a logarithmic tract above $v$ (see e.g. \cite{BE08}). In particular, if $f$ is of finite type all of its asymptotic values have logarithmic tracts lying over them. For this reason, in this paper we will call them simply {\em tracts}.

\subsection{Holomorphic families, holomorphic motions and $\JJ$-stability} \label{sec:holo}    

We give here the precise definitions of what it means to follow the Julia sets holomorphically, given a family of meromorphic maps.

\begin{defn}[Holomorphic family] \label{def:holofam}
A {\em holomorphic family $\{f_\la\}_{\la\in M}$ of meromorphic maps} over a complex connected manifold $M$  is a  holomorphic map $F:M \times \C \longrightarrow \chat$ such that $F(\la, \cdot )=:f_\lam$  is a non-constant meromorphic map for every $\la\in M$.  
\end{defn}

\begin{defn}[Holomorphic motion]
A {\em holomorphic motion} of a set $X \subset \hat\C$ over a set $U\subset M$ with basepoint $\la_0\in U $ is a map  $H: U  \times X \rightarrow \chat$ given by 
 $(\la,x) \mapsto H_\la(x) $ such that
 \begin{enumerate}
\item  for each $x \in X$ , $H_\la(x)$ is holomorphic in $\la$, 
 \item   for each  $\la \in U$,  $H_\la(x) $ is an injective function of  $x \in X$, and,
  \item  at $\la_0$, $H_ {\la_0} \equiv {\rm Id}$.
  \end{enumerate}

A holomorphic motion of a set $X$ {\em respects the dynamics} of the holomorphic  family $\{f_\la\}_{\la\in M}$ if 
 $H_\la \circ f_{\la_0} = f_\la \circ H_{\la}$ whenever both $x$ and $f_{\la_0}(x)$ belong to $X$.
\end{defn}

Note that the continuity of $H$ is not required in the definition. However this property follows as a  consequence, as shown in the $\la-$Lemma proved in \cite{mss}.

\begin{thm}[The $\la-$Lemma \cite{mss}]\label{lem:la-lemma}
A holomorphic motion $H$ of $X$ as above has a unique extension to a holomorphic motion of $\overline{X}$. The extended map $H: U \times \overline{X} \to \chat$ is continuous, and for each $\la \in U$, $H_\la: \overline{X} \to \chat$ is quasiconformal. Moreover, if $H$ respects the dynamics, so does its extension to $\overline{X}$.
\end{thm}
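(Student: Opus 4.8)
The statement is the classical $\lambda$-lemma, so the proof follows the original argument of Mañé--Sad--Sullivan; I outline it here. Since injectivity and continuity are one-parameter phenomena, one reduces to the case $U=\D$ with basepoint $\la_0=0$ (restrict $H$ to analytic disks in $M$ through $\la_0$, prove everything there, and glue by the uniqueness statement; holomorphy of the extension in several variables then follows from separate holomorphy and Hartogs). The engine of the proof is the omitted-values trick: for any four distinct points $x,y,z,w\in X$, the cross-ratio
\[
g(\la):=\bigl[H_\la(x),\,H_\la(y),\,H_\la(z),\,H_\la(w)\bigr]
\]
is a holomorphic function of $\la\in\D$ omitting $0,1,\infty$, because the values $H_\la(x),H_\la(y),H_\la(z),H_\la(w)$ stay pairwise distinct by injectivity of $H_\la$. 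Applying the Schwarz--Pick lemma for the hyperbolic metric of $\chat\setminus\{0,1,\infty\}$, the hyperbolic distance between $g(\la)$ and $g(0)$ is bounded by the hyperbolic distance from $\la$ to $0$ in $\D$. A standard computation converts this into a quantitative control: for every $r<1$ there exist $\alpha=\alpha(r)\in(0,1]$ and $C=C(r)>0$ such that, for $\la$ in the closed disk of radius $r$, the maps $H_\la$ are uniformly quasisymmetric on $X$ for the spherical metric, and in particular uniformly $\alpha$-Hölder with constants independent of $\la$.

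This uniform modulus of continuity lets us extend $H_\la$ to $\overline{X}$ by $H_\la(x):=\lim_n H_\la(x_n)$ for any sequence $X\ni x_n\to x$; the limit exists and is independent of the sequence by the Hölder bound, and the extension is again $\alpha$-Hölder, hence quasisymmetric, on $\overline{X}$. I then verify the three axioms of a holomorphic motion for the extension. \emph{Holomorphy in $\la$:} for fixed $x\in\overline{X}$ choose $x_n\to x$ in $X$; the maps $\la\mapsto H_\la(x_n)$ are holomorphic and, by the uniform Hölder estimate, converge locally uniformly on $\D$, so the limit $\la\mapsto H_\la(x)$ is holomorphic. \emph{Injectivity in $x$:} the quasisymmetry inequality passes to the closure, so $H_\la$ is injective on $\overline{X}$ for every $\la$. \emph{Joint continuity} of $(\la,x)\mapsto H_\la(x)$ on $\D\times\overline{X}$ follows by combining holomorphy in $\la$ with the $\la$-uniform modulus of continuity in $x$, and the normalization $H_0=\id$ is preserved by continuity. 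Uniqueness of the extension is immediate, since $X$ is dense in $\overline{X}$ and any extension is required to be continuous.

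For quasiconformality of $H_\la:\overline{X}\to\chat$ one uses the measurable Riemann mapping theorem together with the quasisymmetry estimate exactly as in \cite{mss} (the sharper fact that $H_\la$ extends to a global quasiconformal self-homeomorphism of $\chat$ with dilatation at most $\tfrac{1+|\la|}{1-|\la|}$ follows from the Bers--Royden / S\l odkowski refinement of the $\lambda$-lemma, but is not needed for the statement here). Finally, suppose $H$ respects the dynamics of $\{f_\la\}$, so that $H_\la\bigl(f_{\la_0}(x)\bigr)=f_\la\bigl(H_\la(x)\bigr)$ whenever $x,f_{\la_0}(x)\in X$. Given $x\in\overline{X}$ with $f_{\la_0}(x)\in\overline{X}$, approximate $x$ by points $x_n\in X$ with $f_{\la_0}(x_n)\in X$ (using continuity and openness of $f_{\la_0}$ away from its poles, and that the relevant set $\overline X$ — typically the Julia set — is fully invariant), then pass to the limit in $H_\la(f_{\la_0}(x_n))=f_\la(H_\la(x_n))$ using joint continuity of $H$ and continuity of $f_\la$, obtaining $H_\la(f_{\la_0}(x))=f_\la(H_\la(x))$.

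The main obstacle is the quantitative step: extracting from the Schwarz--Pick contraction of the cross-ratio a uniform quasisymmetry (hence Hölder) bound valid on each compact subdisk with constants depending only on $r$, and then making sure both injectivity and the dynamics-respecting identity genuinely survive the passage to $\overline{X}$ — the latter requiring a little care where $f_{\la_0}$ fails to be defined or open on $\chat$, i.e. near its poles.
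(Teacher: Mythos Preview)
The paper does not give its own proof of this theorem: it is stated as a classical result from \cite{mss} and the reader is referred there (and to \cite{ast}) for details. So there is no ``paper's proof'' to compare your attempt against.

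That said, your outline is the standard Ma\~n\'e--Sad--Sullivan argument and is essentially correct: the cross-ratio/Schwarz--Pick trick gives the uniform quasisymmetry estimate on compact subdisks, which yields existence, uniqueness, joint continuity, and injectivity of the extension to $\overline{X}$. One point to watch is the dynamics-preservation step. You want to pass to the limit in $H_\la(f_{\la_0}(x_n))=f_\la(H_\la(x_n))$ along a sequence $x_n\in X$ with $f_{\la_0}(x_n)\in X$, but the existence of such an approximating sequence is not automatic from the hypotheses as stated in this paper (the definition of ``respects the dynamics'' here only asks the conjugacy identity to hold when both $x$ and $f_{\la_0}(x)$ lie in $X$, with no invariance assumption on $X$). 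In the applications in this paper $X$ is the set of periodic points, which is forward invariant, so $f_{\la_0}(x_n)\in X$ is free and your argument goes through; but as a general statement you should either add a mild hypothesis (e.g.\ $f_{\la_0}(X)\subset X$, or density of $\{x\in X:f_{\la_0}(x)\in X\}$ in $\{x\in\overline X:f_{\la_0}(x)\in\overline X\}$) or note explicitly that this is what is used.
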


 
  For further results about holomorphic motion and the $\la-$Lemma, see for instance  \cite{ast}. 

\begin{defn}[$\JJ$-stability]\label{def:jstability}
Consider as above a holomorphic family $F: M \times \C \to \chat$ of meromorphic maps. Given $\la_0\in M$, the map $f_{\la_0}$  is $\JJ$-stable if there exists a neighbourhood $U\subset M$ of $\la_0$ over which the Julia sets move holomorphically, i.e.~there exits a holomorphic motion $H: U\times \JJ_{\la_0} \to \chat$ such that $\H_\la (\JJ_{\la_0}) = \JJ_\la$ and furthermore it respects the dynamics. 
\end{defn}

By virtue of the $\la$-Lemma and the density of periodic points in the Julia set, it is enough to construct a holomorphic motion of the set of periodic points of every period, to obtain one for the entire Julia set .

\subsection{ Natural families}
\label{sec:natural}

The goal of this section is to prove Theorem \ref{thm:natural} below.
%
%
But let us first begin by making a few important observations. Let $f=f_{\la_0}$ for some  $\la_0\in M$ and let $\{f_\la=\phi_\lam \circ f \circ \psi_\lambda^{-1}\}_{\lam \in M}$ be a natural family, as in Definition \ref{def:natfam}. Then  $\psi_\lam$ maps the critical points of $f_{\lam_0}$ to those of $f_\lam$, and 
$\phi_\lam$ maps the critical values and asymptotic values of $f_{\lam_0}$ to those of $f_{\lam}$.
In particular,  as observed in the Introduction, in a natural family, the critical points and singular values always move holomorphically with the parameter 
and can never collide, while the multiplicity of each singular value remains constant throughout the family. A converse of this statement, provided by the following theorem,  is also true, though  not immediate.

\begin{thm}[Characterization of natural families] \label{thm:natural}
	Let $\{f_\lam\}_{\lam \in M}$ be a holomorphic family of finite type meromorphic maps, on which
	$S(f_\lam)$ and $f_\lam^{-1}(S(f_\lam))$ both move holomorphically. Then for every $\lam \in M$, there is a neighborhood $V$ of $\lam$ such that $\{f_\lam\}_{\lam \in V}$ is a natural family.
\end{thm}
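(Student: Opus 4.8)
The plan is to work locally near a fixed $\lambda_* \in M$ and to construct the quasiconformal homeomorphisms $\phi_\lambda$ and $\psi_\lambda$ by hand, using the hypothesis that $S(f_\lambda)$ and $f_\lambda^{-1}(S(f_\lambda))$ move holomorphically. Since $f_{\lambda_*}$ is of finite type, $f_{\lambda_*}$ together with $\phi$ and $\psi$ sits inside its Epstein--Eremenko--Lyubich finite dimensional parameter space of dimension $\#S(f_{\lambda_*})+2$: concretely, $f_\lambda$ is determined, up to pre- and post-composition with quasiconformal maps, by its combinatorial/conformal type, and the only moduli that can vary are the positions of the singular values (together with the positions of the tracts and critical points, which control the "gluing"). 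So the strategy is: (1) extract from the holomorphic motions a holomorphically varying conformal model of $f_\lambda$ on each tract and near each critical point; (2) build $\psi_\lambda$ (fixing $\infty$) and $\phi_\lambda$ by interpolating these local models to global quasiconformal homeomorphisms depending holomorphically on $\lambda$; (3) check that with these choices $\phi_\lambda \circ f_{\lambda_*} \circ \psi_\lambda^{-1}$ and $f_\lambda$ are two finite type meromorphic maps with the same conformal type and the same marked singular data, hence equal after possibly adjusting $\phi_\lambda$, $\psi_\lambda$ by a further holomorphic family of conformal (Möbius, resp. affine, since $\psi_\lambda(\infty)=\infty$) corrections.

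First I would set up the local models. Let $H^S\colon V \times S(f_{\lambda_*}) \to \chat$ be the holomorphic motion of the singular set and $H^P\colon V \times f_{\lambda_*}^{-1}(S(f_{\lambda_*})) \to \chat$ the one of its preimage (critical points and "centers'' of tracts, i.e. the prepoles lying in each asymptotic tract or the asymptotic tracts themselves). Near a critical point $c$ of $f_{\lambda_*}$ of local degree $d$, the map $f_{\lambda_*}$ is conformally $z \mapsto z^d$; the corresponding critical point $c(\lambda)=H^P_\lambda(c)$ of $f_\lambda$ has the same local degree (multiplicities are preserved by the motion, by the argument principle applied along the motion), so there are holomorphically varying conformal charts conjugating $f_\lambda$ near $c(\lambda)$ to $z\mapsto z^d$ near $0$, normalized so the target chart sends $v(\lambda)=H^S_\lambda(f_{\lambda_*}(c))$ to $0$. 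Over each tract $T$ above an asymptotic value $v$, $f_{\lambda_*}\colon T \to D_r(v)\setminus\{v\}$ is a universal cover, i.e. conformally $\exp\colon \{\re w < \log r\} \to D^*$; the tract $T(\lambda)$ of $f_\lambda$ (tracked by the motion of $f_\lambda^{-1}(S(f_\lambda))$ plus the requirement $\psi_\lambda(\infty)=\infty$) carries an analogous chart, again varying holomorphically, with the target chart sending $v(\lambda)$ to $v$. This uses that on the relevant compact/combinatorial data everything varies holomorphically because $F\colon M\times\C\to\chat$ is holomorphic, and uniformization/covering-space charts depend holomorphically on holomorphic data (Ahlfors--Bers, or simply the holomorphic dependence of Riemann maps on domains moving holomorphically).

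Then I would glue: on the complement in $\C$ of the (fixed, $\lambda$-independent once we use the $\lambda_*$ charts as domain) union of small disks around critical points and truncated tracts, one interpolates between the boundary values prescribed by the source charts of $f_\lambda$ and those of $f_{\lambda_*}$ to obtain $\psi_\lambda\colon\C\to\C$, quasiconformal, holomorphic in $\lambda$, with $\psi_{\lambda_*}=\id$ and $\psi_\lambda(\infty)=\infty$; similarly one interpolates target charts to get $\phi_\lambda\colon\chat\to\chat$. The quasiconformal interpolation of boundary homeomorphisms on annuli/disks (e.g. barycentric extension, or simply $\lambda$-wise Beurling--Ahlfors with holomorphic parameter dependence) produces a holomorphic motion by the $\lambda$-Lemma philosophy; shrinking $V$ keeps the dilatation bounded. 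By construction $g_\lambda := \phi_\lambda\circ f_{\lambda_*}\circ\psi_\lambda^{-1}$ is meromorphic (the interpolation is chosen so the Beltrami coefficients of $\phi_\lambda$ and of $f_{\lambda_*}\circ\psi_\lambda^{-1}$ cancel), of finite type, with singular values, tracts and critical points marked exactly as those of $f_\lambda$, and with the same local degrees and the same asymptotic/covering structure. Two finite type meromorphic maps agreeing on all this combinatorial-conformal data differ only by a conformal automorphism on each side, i.e. $f_\lambda = A_\lambda\circ g_\lambda\circ B_\lambda^{-1}$ for a Möbius $A_\lambda$ and an affine $B_\lambda$ (affine because it must fix $\infty$) depending holomorphically on $\lambda$ with $A_{\lambda_*}=B_{\lambda_*}=\id$; replacing $\phi_\lambda$ by $A_\lambda\circ\phi_\lambda$ and $\psi_\lambda$ by $B_\lambda\circ\psi_\lambda$ exhibits $\{f_\lambda\}_{\lambda\in V}$ as natural.

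The main obstacle, and the step requiring the most care, is the gluing near the \emph{ends} — the asymptotic tracts — together with the constraint $\psi_\lambda(\infty)=\infty$: one must interpolate between the two logarithmic (exponential) charts on an unbounded region in such a way that the result is globally quasiconformal (bounded dilatation up to $\infty$), holomorphic in $\lambda$, respects $\psi_\lambda(\infty)=\infty$, and does not destroy the cancellation of Beltrami coefficients needed for $\phi_\lambda\circ f_{\lambda_*}\circ\psi_\lambda^{-1}$ to be genuinely meromorphic rather than merely quasiregular. This is exactly the point where the finite type hypothesis and the holomorphic motion of $f_\lambda^{-1}(S(f_\lambda))$ (which pins down the tracts) are essential, and where the proof of the analogous statement in \cite{el,EpsteinThesis} for the full parameter space must be adapted to the subfamily setting; I would model this part on the logarithmic-coordinates constructions of Eremenko--Lyubich, checking holomorphic dependence on $\lambda$ throughout.
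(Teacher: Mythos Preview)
Your approach has a genuine gap at the step you yourself flag as the main obstacle, and a second gap at the rigidity claim. Constructing $\psi_\lambda$ and $\phi_\lambda$ by gluing local conformal charts and interpolating on the complement will \emph{not} in general yield a $g_\lambda = \phi_\lambda \circ f_{\lambda_*} \circ \psi_\lambda^{-1}$ that is meromorphic: for that you need the Beltrami coefficient of $\psi_\lambda$ to equal the $f_{\lambda_*}$-pullback of that of $\phi_\lambda$ \emph{everywhere}, not just on the small disks and truncated tracts where your charts live. On the interpolation region you have no control linking the two extensions, so $g_\lambda$ will only be quasiregular. Your fallback---that two finite type maps with the same ``combinatorial-conformal data'' differ by M\"obius/affine maps---is not a theorem you can cite; even making the phrase precise is nontrivial, and in any case it presupposes $g_\lambda$ is already meromorphic.

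The paper's proof avoids all of this by reversing the logic. It first extends the two given motions (of $S(f_\lambda)$ and of $f_\lambda^{-1}(S(f_\lambda))$) to motions $\phi_\lambda$, $\chi_\lambda$ of the whole sphere via Bers--Royden, with no attempt to make $\phi_\lambda^{-1}\circ f_\lambda \circ \chi_\lambda$ holomorphic. Instead it observes that $g_\lambda := \phi_\lambda^{-1}\circ f_\lambda \circ \chi_\lambda$ is, for each $\lambda$, a (merely continuous) covering $\C\setminus f^{-1}(S(f)) \to \hat{\C}\setminus S(f)$, homotopic to $g_{\lambda_0}=f$. The homotopy lifting property then produces homeomorphisms $h_\lambda$ with $g_\lambda = f\circ h_\lambda$. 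Setting $\psi_\lambda := \chi_\lambda\circ h_\lambda^{-1}$, the relation $\phi_\lambda\circ f = f_\lambda\circ\psi_\lambda$ holds \emph{by construction}; holomorphic dependence of $\psi_\lambda(z)$ in $\lambda$ then follows from the implicit function theorem applied to $(\lambda,y)\mapsto \phi_\lambda(f(z))-f_\lambda(y)$, and quasiconformality in $z$ comes for free from the $\lambda$-lemma. No gluing, no tract-by-tract interpolation, and no rigidity statement are needed.
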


The proof of  Theorem \ref{thm:natural} requires the following  lemma.

\begin{lem}\label{lem:familylift}
Let $X, Y$  be two connected and  path connected topological spaces. Suppose that $\{g_t:Y\to X\}_{t\in[0,1]}$,  is a homotopy such that $g_t$ is a covering for every $t\in [0,1]$.
 Then, there exists a homotopy $\{h_t: Y \to Y\}_t $ such that every $h_t$ is a homeomorphism satisfying
\[
g_t =  g_0 \circ h_t.
\] 
\end{lem}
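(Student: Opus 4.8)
The plan is to build the isotopy $h_t$ directly from covering-space theory, using the fact that on a simply connected base one has a global section, and in general one can lift the universal cover. First I would reduce to the case where $Y$ is nice enough to have a universal cover: the statement as phrased only asks for topological spaces, but the natural argument needs local path-connectedness and semi-local simple connectedness, so implicitly we work with manifolds (which is the only case needed in Theorem~\ref{thm:natural}). Let $p:\widetilde{Y}\to Y$ be the universal cover. The key construction is to lift the homotopy of covering maps $g_t\circ p:\widetilde{Y}\to X$; since $\widetilde{Y}$ is simply connected, each $g_t\circ p$ factors through the universal cover $q:\widehat{X}\to X$ of $X$ via some lift $\widetilde{g_t}:\widetilde{Y}\to\widehat{X}$, and because $g_t$ is a covering of the \emph{same} degree/with the same image of $\pi_1$ for all $t$ (by homotopy invariance of the subgroup $(g_t)_*\pi_1(Y)\subset\pi_1(X)$ — here $t\mapsto(g_t)_*$ is locally constant), each $\widetilde{g_t}$ is in fact a homeomorphism $\widetilde{Y}\to\widehat{X}$ onto the connected cover of $X$ corresponding to that fixed subgroup. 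Then $\widetilde{h_t}:=\widetilde{g_0}^{-1}\circ\widetilde{g_t}:\widetilde{Y}\to\widetilde{Y}$ is a homeomorphism with $\widetilde{g_0}\circ\widetilde{h_t}=\widetilde{g_t}$.

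The next step is to descend $\widetilde{h_t}$ to $Y$. For this I need that $\widetilde{h_t}$ is equivariant for the deck group $G=\pi_1(Y)$ acting on $\widetilde{Y}$: one checks that $\widetilde{g_t}$ intertwines the $G$-action with the action of the subgroup $H=(g_t)_*G=(g_0)_*G\subset\pi_1(X)$ on $\widehat{X}$, and this identification of $G$ with $H$ via $(g_t)_*$ is independent of $t$ by the locally-constant argument above. Hence $\widetilde{g_0}^{-1}\circ\widetilde{g_t}$ commutes with every deck transformation of $\widetilde Y$, so it descends to a homeomorphism $h_t:Y\to Y$ with $g_0\circ h_t=g_t$. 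Continuity of $(t,y)\mapsto h_t(y)$ follows from continuity of the lifted homotopy $(t,\tilde y)\mapsto\widetilde{h_t}(\tilde y)$ (standard homotopy lifting) together with the fact that $p$ is a local homeomorphism; and $h_0=\id$ because we may normalize the lifts so that $\widetilde{g_0}$ is our chosen reference lift, giving $\widetilde{h_0}=\id$.

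An alternative, perhaps cleaner route I would keep in mind: view the whole family as a single covering map $G:[0,1]\times Y\to[0,1]\times X$, $(t,y)\mapsto(t,g_t(y))$ (it is a covering since each slice is and the base is a product over the contractible factor), and likewise $G_0:[0,1]\times Y\to[0,1]\times X$, $(t,y)\mapsto(t,g_0(y))$. Both are coverings of $[0,1]\times X$ inducing the same subgroup of $\pi_1([0,1]\times X)=\pi_1(X)$ (using $t\mapsto(g_t)_*$ locally constant), hence are isomorphic as coverings; an isomorphism fibered over $[0,1]$ is exactly a family $\{h_t\}$ of homeomorphisms with $g_0\circ h_t=g_t$, and one adjusts by a deck transformation to arrange $h_0=\id$.

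The main obstacle I expect is \emph{not} the existence of the lift but pinning down the hypotheses and the basepoint bookkeeping: one must argue that $t\mapsto(g_t)_*\colon\pi_1(Y,y_0)\to\pi_1(X,g_0(y_0))$ is well-defined and constant (the target group itself moves, so one transports along the path $t\mapsto g_t(y_0)$, and must check this transport makes the subgroup genuinely $t$-independent), and one must ensure that "covering'' is used in a setting where universal covers exist — which is why the lemma is really applied only to (path-connected, locally path-connected, semi-locally simply connected) spaces such as manifolds. Once those points are handled, everything else is the standard homotopy-lifting machinery.
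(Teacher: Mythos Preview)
Your approach is correct but considerably more elaborate than the paper's. The paper bypasses universal covers entirely: it applies the homotopy lifting property of the covering $g_0:Y\to X$ directly to the homotopy $(t,y)\mapsto g_t(y)$, with initial lift $h_0=\id_Y$. This immediately produces a continuous family $h_t:Y\to Y$ with $g_0\circ h_t=g_t$. To see that each $h_t$ is a homeomorphism, the paper observes that $h_t$ is a local homeomorphism (locally $h_t=g_0^{-1}\circ g_t$), and that $(h_t)_*=\id$ on $\pi_1(Y)$ since $h_t$ is homotopic to $\id$; standard covering-space theory then forces $h_t$ to be a homeomorphism.

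Compared to this, your route through $\widetilde Y$ and $\widehat X$, with the equivariance and basepoint bookkeeping you correctly anticipated as the main obstacle, is doing by hand what the homotopy lifting property already packages. Your alternative ``product covering over $[0,1]\times X$'' idea is closer in spirit to the paper's argument, but still requires checking that $G$ is a covering and arranging the isomorphism to be fibered, whereas the direct HLP application gives this for free. The one thing your write-up makes more explicit than the paper's is the need for mild point-set hypotheses (local path-connectedness, etc.); the paper is silent on this, relying on the fact that in the application all spaces are open subsets of Riemann surfaces.
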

\begin{proof}
By the homotopy lifting property, 
since the covering map $g_0$ can be lifted by the identity map, there exists a unique homotopy $\{h_t:Y \to Y\}_{t\in[0,1]}$ that lifts $g_t$, hence satisfying the following diagram
\[
\begin{tikzcd}[row sep=10ex, column sep=10ex]
&  Y \arrow[d,"g_0"] \\
Y \arrow[r, "g_0"] \arrow[r, bend right, "g_t"'] 
\arrow[ur,"id"'] \arrow[ur, bend left, "h_t"] & X.
\end{tikzcd}
\]
It remains to prove that $h_t$ is a homeomorphism for every $t$. 

We first observe that  $h_t$ is a local homeomorphism since $g_t$ is a local homeomorphism (and a covering) for every $t$ and the diagram commutes. Next, since  $h_t$ is a homotopy to the identity map, we have that $(h_t)_*(\pi_1(Y,y_0)) = (\id)_*((\pi_1(Y,y_0))) = \pi_1(Y, y_0)$, from which it follows that $h_t$ must also be a homeomorphism for every $t$.
\end{proof}

\begin{proof}[Proof of Theorem~\ref{thm:natural}]
	Let $\lam_0 \in M$, and $f:=f_{\lam_0}$. 
	We let $Y:=\C \backslash f^{-1}(S(f))$ 
	and $X:=\chat \backslash S(f)$. Let $B \subset M$ be a small ball centered at $\lam_0$.
	By Bers-Royden's Harmonic $\lam$-lemma \cite{bers1986holomorphic},	
	we can extend the holomorphic motion of $S(f_\lambda)$ over $B$ to
	a holomorphic motion $(\lam,z) \mapsto \phi_{\lam}(z)$ of the whole Riemann sphere, 
	over the ball $V$ of radius one third that of $B$.
	Similarly, we extend the holomorphic motion of $f_\lam^{-1}(S(f_\lam))$
	to a holomorphic motion $(\lam,z) \mapsto \chi_\lam(z)$. Notice that  both maps are continuous as maps of $V\times \chat\to\chat$, holomorphic in $\la$ and quasiconformal in $z$.

	Consider the family of  maps $g_\lam:=\phi_\lam^{-1} \circ f_\lam \circ \chi_\lam$ (in particular, $g_{\lam_0}=f$). If we could prove that $g_\la\equiv f$, the proof would be complete, since $f_\la$ would then be of the required form. This is not quite true, but we will show that in fact
 $g_\lam = f \circ h_\lam$, for some  family of homeomorphisms $h_\lam$, which will still be enough to conclude.

Observe that the maps $g_\lam:\C\setminus f^{-1}(S(f)) \to \chat\setminus S(f)$ are covering maps, in general  only continuous (both in $z$ and in $\lambda$ as well as jointly) and not holomorphic.  Our first claim is that  there exists a continuous family of homeomorphisms $\{h_\la:\C\setminus f^{-1}(S(f)) \to \C\setminus f^{-1}(S(f)) \}_{\la \in M}$ such that
\[
\psi_\la:= \chi_\la \circ h_\la^{-1}: \C\setminus f^{-1}(S(f)) \to \C\setminus f_\lam^{-1}(S(f_\lam))
\]
depend holomorphically on $\la$ and the following diagram commutes:
\[
\begin{tikzcd}[row sep=10ex, column sep=10ex]
\C\setminus f^{-1}(S(f)) \arrow[dr, "g_{\la_0}=f"] \arrow[dd,bend right=50, "\psi_\la"']&  \\
\C\setminus f^{-1}(S(f)) \arrow[u, "h_\la"] \arrow[r, "g_\la"]  \arrow[d,"\chi_\la"']
	& \hat{\C}\setminus S(f) \arrow[d,"\varphi_\la"]\\
\C\setminus f_\lam^{-1}(S(f_\lam)) \arrow[r, "f_\la"] & \chat \setminus S(f_\lam)
\end{tikzcd}.
\]

To define $h_\la$,  let us fix $\la\in V$ and consider a path $\Lambda:[0,1]\to V$ joining $\la_0$ and $\la$. By Lemma \ref{lem:familylift}, there exists a homotopy $\{\tilde{h}_t: \C\setminus f^{-1}(S(f)) \to \C\setminus f^{-1}(S(f))\}_t$ composed of homeomorphisms such that 
\[
g_{\Lambda(t)} = f \circ \tilde{h}_t.
\]
We then define $h_\la:=\tilde{h}_1$. Since $V$ is simply connected, the base points are irrelevant and hence $(\la, z )\to (\la, h_\la(z))$ is continuous.
 
 Since both $h_\la^{-1}$ and $\chi_\la$ are homeomorphisms, we have proven that $\psi_\la$ is a homemorphism for every $\la\in M$. It is left to prove that for every for every $z \in \C\setminus f^{-1}(S(f))$, $\lam \mapsto \psi_{\lam}(z)$ is holomorphic in $\lambda$.

 To see this, observe that we have $\phi_{\lam} \circ f(z)=f_\lam \circ \psi_\lam(z)$, and that, fixing $z \in \C\setminus f^{-1}(S(f))$,  the map
	$F(\lam, y)= \phi_{\lam} \circ f(z) - f_\lam(y)$ is holomorphic. Fix $\la_1\in V$ and  $y_1=\psi_{\la_1}(z)$. Then, by construction $f_{\la_1}'(y_1)\neq 0$  and hence, by the Implicit Function Theorem, there exists a map $\lam \mapsto y(\lambda)=\psi_\la(z)$ such that $y_1=y(\la_1)$ and $F(\la, y(\la))=\varphi_\la(f(z))-f_\la(\psi_\la(z))=0$. 
It follows that $\lam \mapsto \psi_{\lam}(z)$ is holomorphic locally near every $\lam_1 \in V$, and so is holomorphic on $M$.

	Finally, we have proved that
	\begin{enumerate}
		\item for each $\lam \in V$, $\psi_\lam: Y \to \C \backslash  f_\lam^{-1}(S(f_\lam))$ is injective
		\item for every $z \in Y$, $\lam \mapsto \psi_{\lam}(z)$ is holomorphic on $Y$.
	\end{enumerate}
	In other words, 
	$(\lam,z) \mapsto \psi_\lam(z)$ is a holomorphic motion 
	of $\C \backslash f_\lam^{-1}(S(f_\lam))$. By applying again Bers-Royden's Harmonic $\lam$-lemma and replacing again $V$ by a ball of smaller radius,
	$\psi_\la$  extends  to a holomorphic motion of $\chat$ (fixing $\infty$), such that for every $\lam \in V$, 
	$\psi_{\lam}: \chat \to \chat$ is a quasiconformal homeomorphism.
	
Since $f_\la \circ \psi_\la = \varphi_\la \circ f$, this proves that $\{f_\lam\}_{\lam \in V}$ is a natural family.
 \end{proof}

Before we end this section, we record here a lemma which illustrates the convenience of working with natural families.

\begin{lem}\label{lem:infinitypassive}
	Let $\{f_\lam\}_{\lam \in M}$ be a natural family of finite type meromorphic maps, such that for all $\lam \in M$, $f_\lam$ has finitely many poles. Then the number of poles is independent of $\lam$. Moreover, for all $\lam \in M$, $\infty$ is an asymptotic value for $f_\lam$. 
\end{lem}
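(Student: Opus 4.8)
\textbf{Proof proposal for Lemma \ref{lem:infinitypassive}.}

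The plan is to exploit the defining structure $f_\lam = \phi_\lam \circ f_{\lam_0} \circ \psi_\lam^{-1}$ together with the fact that $\phi_\lam$ and $\psi_\lam$ are homeomorphisms of $\chat$ fixing $\infty$. First I would note that since $f := f_{\lam_0}$ has finitely many poles by hypothesis and $\infty$ is omitted by no pole in the relevant sense, the set $f^{-1}(\infty)$ is a finite subset of $\C$. Because $\psi_\lam$ is a homeomorphism of $\chat$ with $\psi_\lam(\infty) = \infty$, it maps $\C$ onto $\C$, and one checks directly from the relation $f_\lam = \phi_\lam \circ f \circ \psi_\lam^{-1}$ that $f_\lam^{-1}(\infty) = \psi_\lam\big(f^{-1}(\phi_\lam^{-1}(\infty))\big) = \psi_\lam\big(f^{-1}(\infty)\big)$, using $\phi_\lam(\infty) = \infty$. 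Hence the number of poles of $f_\lam$ equals the cardinality of $\psi_\lam(f^{-1}(\infty))$, which is $\#f^{-1}(\infty)$ since $\psi_\lam$ is injective. This gives the first assertion: the number of poles is constant, equal to the number of poles of $f_{\lam_0}$.

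For the second assertion, I would argue that $\infty$ is an asymptotic value of $f_\lam$ for every $\lam$, by transporting an asymptotic tract. Since $f$ has a pole, say at $p \in \C$, and finitely many poles in total, consider a small punctured neighborhood $D$ of $\infty$ whose $f$-preimage near $p$ is a bounded punctured neighborhood $U$ of $p$ on which $f$ is a finite-degree branched cover of $D$ — but this exhibits $\infty$ as a \emph{critical or regular} value via $p$, not necessarily asymptotic. The point I actually want is subtler: a \emph{transcendental} singularity over $\infty$. Here I would instead use the fact that $f$ is transcendental meromorphic (it is of finite type with an essential singularity at $\infty$, since it has a pole and is not rational — a meromorphic map $\C \to \chat$ with finitely many poles that is not rational must have an essential singularity at $\infty$), so it has an asymptotic tract $T \subset \C$ over some asymptotic value, and in fact, by the classical theory (e.g. \cite{BE08} together with the presence of poles, cf. the discussion of tracts in Section \ref{subsec:asympv}), $\infty$ itself admits a logarithmic tract for $f$: intuitively, the "exponential-type" growth of the universal cover forces at least one tract over $\infty$. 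Granting that $f$ has a tract $T$ over $\infty$, the set $\psi_\lam(T)$ is a simply connected unbounded subset of $\C$ (unbounded because $\psi_\lam$ fixes $\infty$ and is a homeomorphism), and since $f_\lam|_{\psi_\lam(T)} = \phi_\lam \circ f \circ \psi_\lam^{-1}|_{\psi_\lam(T)}$ is a composition of the infinite-degree covering $f: T \to D$ with the homeomorphisms $\psi_\lam^{-1}$ and $\phi_\lam$ (the latter fixing $\infty$, so mapping a punctured neighborhood of $\infty$ to one), it is again an infinite-degree unbranched covering onto a punctured neighborhood of $\infty$. Thus $\psi_\lam(T)$ is a tract over $\infty$ for $f_\lam$, so $\infty \in S(f_\lam)$ as an asymptotic value.

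The main obstacle I anticipate is the claim that $f_{\lam_0}$ itself has a tract over $\infty$, i.e. that $\infty$ is an asymptotic value of a transcendental meromorphic finite type map with at least one pole. This is where I would need to be careful: the statement should follow from the Iversen/Bergweiler–Eremenko theory of transcendental singularities — a transcendental entire function always has $\infty$ as an asymptotic value, and more generally for transcendental meromorphic $f$ the singularity of $f^{-1}$ "over $\infty$" is either a direct (hence transcendental/asymptotic, giving a logarithmic tract since $f$ is of finite type) singularity, or $\infty$ is approached only through poles. In the finite-type case one can rule out the latter: if every access to $\infty$ in the range were through (finitely many) poles, $f$ would be rational. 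I would phrase this as: since $f$ is transcendental and of finite type, the singularity over $\infty$ is necessarily logarithmic, hence $\infty$ is an asymptotic value with a tract, and then transport the tract as above. If a cleaner reference or a short self-contained argument (e.g. via the logarithmic change of variable on a tract, or via Nevanlinna theory bounding the number of finite asymptotic and critical values) is available, I would use it here; otherwise this is the one point that needs a genuine citation to the transcendental-dynamics literature rather than a formal manipulation.
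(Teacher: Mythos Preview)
Your argument for the first assertion has a genuine gap: you assume $\phi_\lam(\infty)=\infty$, but the definition of a natural family (Definition~\ref{def:natfam}) only requires $\psi_\lam(\infty)=\infty$. A priori $\phi_\lam^{-1}(\infty)$ can move with $\lam$, and then the set of poles of $f_\lam$ is $\psi_\lam\big(f^{-1}(\{\phi_\lam^{-1}(\infty)\})\big)$, whose cardinality could in principle vary as the target value $\phi_\lam^{-1}(\infty)$ moves. The paper closes exactly this gap: since the fibre $f^{-1}(\{\phi_\lam^{-1}(\infty)\})$ is finite for every $\lam$ (these are the poles of $f_\lam$, assumed finite), each value $\phi_\lam^{-1}(\infty)$ is a Picard exceptional value of the transcendental map $f$. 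There are at most two of those, so the continuous map $\lam\mapsto\phi_\lam^{-1}(\infty)$ on the connected manifold $M$ is constant. Only then does your computation go through.

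For the second assertion, your transport-the-tract argument would work once you know $\infty$ is an asymptotic value of $f_{\lam_0}$, but the paper takes a much shorter route that also bypasses your acknowledged obstacle: since each $f_\lam$ has finitely many poles, $\infty$ is a Picard exceptional value for $f_\lam$, and a classical theorem (e.g.\ \cite[Ch.~5, Thm.~1.1]{golost}) says that a Picard exceptional value of a transcendental meromorphic function is always an asymptotic value. This one-line argument applies directly to every $f_\lam$ and avoids the need to transport anything.
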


\begin{proof}
	Let us write $f_\lam = \phi_\lam \circ f \circ \psi_\lam^{-1}$, with $\psi_\lam(\infty)=\infty$ and $f=f_{\lam_0}$ for some $\lam_0 \in M$, and $\phi_{\lam}, \psi_{\lam}$ are both quasiconformal homeomorphisms. Then the set of poles of $f_\lam$ is the image under  $\psi_\lam$ of the set $f^{-1}( \{\phi_\lam^{-1}(\infty)\})$. In particular, this set  $f^{-1}( \{\phi_\lam^{-1}(\infty)\})$ is finite by assumption, so $\phi_\lam^{-1}(\infty)$ is  
 a  Picard  exceptional value for $f$. By Picard's theorem and connectivity of $M$, the continuous map $\lam \mapsto \phi_\lam^{-1}(\infty)$ is constant: $\phi_{\lam}^{-1}(\infty)\equiv x \in \rs$.
	
	Therefore the set of poles of $f_\lam$ is given by $\psi_{\lam} \left( f^{-1}(\{x\})   \right)$, which is a holomorphic motion of $f^{-1}(\{x\})$. In particular, its cardinality is independent of $\lam$.
	
	For the second assertion, it suffices to observe that if $f_\lam$ has finitely many poles, then $\infty$ is a Picard exceptional value, and therefore an asymptotic value \cite[Chapter 5, Theorem 1.1]{golost}.
\end{proof}


\subsection{Quasiconformal distortion}

We state here a well-known distortion estimate for quasiconformal homeomorphisms  that we will need in the  proof of the main theorems.

\begin{lem}[Distortion of small disks]\label{lem:distortion}
	Let $\{\varphi_\lam\}_{\lam \in \D}$ be a holomorphic motion of the Riemann sphere $\rs$, with $\varphi_0 = \id$.
	Let $t \mapsto \lam(t)$ be a continuous path in $\D$ with $\lim_{t \to +\infty} \lam(t)=0$, and $t \mapsto r_t$ a continuous function with $r_t >0$ and 
	$\lim_{t \to +\infty} r_t=0$. Let $t \mapsto z_t$ be a path in $\rs$ and $D_t:=\D(z_t,r_t)$. Let $\epsilon>0$;
	then for all $t$ large enough:
	\begin{equation*}
		\D(\varphi_{\lam(t)}(z_t), r_t^{1+\epsilon}) \subset \varphi_{\lam(t)}(\D(z_t,r_t)) \subset \D(\varphi_{\lam(t)}(z_t), r_t^{1-\epsilon})
	\end{equation*}
\end{lem}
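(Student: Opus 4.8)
\textbf{Proof plan for Lemma~\ref{lem:distortion}.}

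The plan is to reduce everything to a single quantitative statement about holomorphic motions: a normalized holomorphic motion of the sphere over $\D$ has, on any compact subset of the parameter disk, a modulus of quasiconformality that is uniformly controlled, and moreover its quasiconformal Hölder exponent degenerates in a controlled way as the parameter tends to the basepoint. Concretely, by the $\lambda$-lemma (Theorem~\ref{lem:la-lemma}) together with the standard improvement due to Astala--Bojarski--Mañé--Sad--Sullivan, for each $\rho<1$ the map $\varphi_\lambda$ is $K(\rho)$-quasiconformal for $|\lambda|\le \rho$, with $K(\rho)\to 1$ as $\rho\to 0$; equivalently the dilatation $\|\mu_{\varphi_\lambda}\|_\infty \le |\lambda|$ after postcomposing with a Möbius normalization (Bers--Royden). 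The key point is therefore: for any $K$-quasiconformal self-map $\varphi$ of $\rs$, normalized (say fixing three points), and any $z$ and small $r$, one has $\D(\varphi(z), c\, r^{K}) \subset \varphi(\D(z,r)) \subset \D(\varphi(z), C\, r^{1/K})$ for constants $c,C$ depending only on $K$ and the normalization. This is a classical Hölder/quasisymmetry estimate (Mori's theorem on $\rs$, or the two-sided Hölder bounds for quasiconformal maps), and I would cite it rather than reprove it.

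First I would fix $\epsilon>0$ and choose $\rho>0$ small enough that $K(\rho)<\sqrt{1+\epsilon}$; since $\lambda(t)\to 0$, we have $|\lambda(t)|\le\rho$ for all $t$ large, so $\varphi_{\lambda(t)}$ is $K(t)$-quasiconformal with $K(t)\le K(\rho)<\sqrt{1+\epsilon}$. Next I would apply the two-sided quasiconformal distortion estimate above to $\varphi_{\lambda(t)}$ at the point $z_t$ with radius $r_t$: this gives, for all $t$ large,
\[
\D\!\left(\varphi_{\lambda(t)}(z_t),\, c\, r_t^{K(t)}\right) \subset \varphi_{\lambda(t)}\!\left(\D(z_t,r_t)\right) \subset \D\!\left(\varphi_{\lambda(t)}(z_t),\, C\, r_t^{1/K(t)}\right),
\]
with $c,C$ uniform in $t$ (the normalization constants can be taken uniform because $\varphi_\lambda$ depends continuously on $\lambda$ near $0$ and $\varphi_0=\id$, so all these maps are uniformly close to the identity). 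Finally, since $r_t\to 0$ and $K(t)< \sqrt{1+\epsilon} < 1+\epsilon$, for $t$ large enough we have $c\, r_t^{K(t)} \ge r_t^{1+\epsilon}$ (because $r_t^{K(t)-(1+\epsilon)}\to +\infty$ absorbs the constant $c$), and similarly $C\, r_t^{1/K(t)} \le r_t^{1-\epsilon}$ since $1/K(t) > 1/\sqrt{1+\epsilon} > 1-\epsilon$. Combining the three inclusions yields exactly the claimed sandwich $\D(\varphi_{\lambda(t)}(z_t), r_t^{1+\epsilon}) \subset \varphi_{\lambda(t)}(\D(z_t,r_t)) \subset \D(\varphi_{\lambda(t)}(z_t), r_t^{1-\epsilon})$ for all $t$ large.

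The main obstacle, or rather the point requiring care, is the uniformity of the constants $c, C$ in the quasiconformal distortion estimate: on $\rs$ (as opposed to the plane) one must be careful that the point $z_t$ does not run off to a region where the spherical geometry distorts the estimate, but this is handled by the compactness of $\rs$ together with the fact that $\varphi_{\lambda(t)}\to\id$ uniformly on $\rs$ as $t\to\infty$ (a consequence of the continuity in the $\lambda$-lemma), which pins down the normalization uniformly. Once one has $K(t)\to 1$ and $r_t\to 0$ simultaneously, the degeneracy of the Hölder exponent is beaten by the shrinking of $r_t$, and there is no genuine difficulty; the lemma is essentially a packaging of the improved $\lambda$-lemma plus Mori-type distortion.
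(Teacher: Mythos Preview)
Your proposal is correct and follows essentially the same approach as the paper: both use that $K_{\lam(t)}\to 1$ as $\lam(t)\to 0$ (from the $\lam$-lemma) together with a Mori-type H\"older distortion estimate for $K$-quasiconformal maps, then absorb the multiplicative constants using $r_t\to 0$. The only difference is cosmetic: the paper cites the explicit three-point inequality from \cite[Theorem~12.6.3]{astala2008elliptic} (comparing $|\varphi(z_t+r e^{i\theta})-\varphi(z_t)|$ to $|\varphi(z_t+e^{i\theta})-\varphi(z_t)|$) and handles the lower inclusion by applying the same bound to $\varphi_{\lam(t)}^{-1}$, whereas you package both sides into an abstract two-sided Mori estimate with constants $c,C$.
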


\begin{proof}
	By Theorem 12.6.3 p. 313 in \cite{astala2008elliptic}, for all $t>0$, $\theta \in \R$ and  and $r \leq 1$, we have : 
	$$|\varphi_{\lam(t)}(z_t+r e^{i\theta}) - \varphi_{\lam(t)}(z_t) | \leq e^{5(K_{\lam(t)}-1)} \cdot |\varphi_{\lam(t)}(z_t)- \varphi_{\lam(t)}(z_t + e^{i\theta}) | \cdot r^{1/K_{\lam(t)}},$$
	where $K_\lam>1$ is the dilatation of $\varphi_\lam$.
	Since  $\varphi_{\lam(t)} \to  \id$ uniformly on $\rs$ as $t \to +\infty$, we have that as $t \to +\infty$:
	$$\sup_{\theta \in [0,2\pi]} |\varphi_{\lam(t)}(z_t)- \varphi_{\lam(t)}(z_t + e^{i\theta}) | \to 1.$$
	Since $(\varphi_\lam)$ is a holomorphic motion,  $K_\lam \to 1$ as $\lam \to 0$ and so 
	$K_{\lam(t)} \to 1$ as $t \to +\infty$.
	The inclusion  $\varphi_{\lam(t)}(\D(z_t,r_t)) \subset \D(\varphi_{\lam(t)}(z_t), r_t^{1-\epsilon})$
	then follows.

	The other inclusion is equivalent to $\varphi_{\lam(t)}^{-1}(\D(y_t,r_t^{1+\epsilon})) \subset \D(\varphi_{\lam(t)}^{-1}(y_t), r_t)$, with $y_t:=\phi_{\lam(t)}(z_t)$. Its proof is essentially the same and 
	is left to the reader. 
\end{proof}

We also record here the following well-known property of quasiconformal mappings, that we will need in the proof of Theorem A.

\begin{lem}[See \cite{el}, Lemma 4]\label{lem:ELLemma4}
	Let $\psi:\hat{\C}\ra\hat{\C}$ be a $K$-quasiconformal homeomorphism fixing $0,\infty$.  Let $\arg\psi(z)-\arg z$ be a uniform branch of the difference of arguments in $\C^*$. Suppose 
	\[
	B^{-1}\leq|\psi(z_0)|\leq B, \hspace{20pt} |\arg\psi(z_0)-\arg z_0|\leq B,
	\]
	for some  $z_0\in\C$ and  $B>0$.
	Then for $|z|>|z_0|$ the following estimates hold: 
	\begin{align}
		\label{eqtn:ELLem4Eq1}&C^{-1}|z|^{K_1^{-1}}\leq|\psi(z)|\leq C|z|^{K_1}\\
		\label{eqtn:ELLem4Eq2}&|\arg \psi(z)-\arg z|\leq K_1 \ln|z|+C.
	\end{align}
	Here $K_1, C$ depend on $K,z_0,B$ but not on $z,\psi$. 
\end{lem}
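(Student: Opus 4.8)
\textbf{Proof proposal for Lemma~\ref{lem:ELLemma4} (ELR Lemma~4 type estimate).}

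The plan is to reduce the statement to the standard Hölder estimate for normalized quasiconformal maps of the plane (or sphere) by passing to logarithmic coordinates. First I would consider the map $\Phi(w) := \log \psi(e^w)$, a single-valued choice being possible once we fix a branch of $\arg \psi(z) - \arg z$ as in the hypothesis; since $\psi$ fixes $0$ and $\infty$, the map $z \mapsto e^w$ conjugates the problem to a map $\Phi$ of a right half-plane (say $\{\re w > \log|z_0|\}$) into $\C$, and $\Phi$ is again $K$-quasiconformal because $w \mapsto e^w$ is conformal. The normalization hypotheses $B^{-1} \le |\psi(z_0)| \le B$ and $|\arg\psi(z_0) - \arg z_0| \le B$ translate exactly into a bound $|\Phi(w_0) - w_0| \le C(B)$ at the base point $w_0 = \log z_0$.

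The core step is then a Hölder/linear-growth estimate for $\Phi$. One knows (e.g.\ from the representation of quasiconformal maps, or directly from Astala--Iwaniec--Martin \cite{astala2008elliptic} as already cited in the proof of Lemma~\ref{lem:distortion}) that a $K$-quasiconformal homeomorphism of the plane that is suitably normalized is locally $\frac{1}{K}$-Hölder and its inverse is locally $K$-Hölder. Applying this on disks of fixed size and chaining such estimates along a segment from $w_0$ to $w$, one obtains
\[
\frac{1}{K}|w - w_0| - C' \le \re \Phi(w) \le K|w - w_0| + C', \qquad |\im \Phi(w) - \im w| \le K|w - w_0| + C',
\]
with $C'$ depending only on $K$ and the base-point data. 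Exponentiating the real part gives \eqref{eqtn:ELLem4Eq1} with $K_1 = K$ (after adjusting constants), and the imaginary part is precisely \eqref{eqtn:ELLem4Eq2} once we note $\im w = \arg z$ and $\im \Phi(w) = \arg \psi(z)$ and $|w - w_0| \le \ln|z| + C''$ for $|z| > |z_0|$.

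The main obstacle I anticipate is bookkeeping rather than conceptual: one must be careful that the branch of $\arg$ chosen makes $\Phi$ genuinely single-valued and continuous on the relevant half-plane (this uses that $\psi$ is a homeomorphism of $\chat$ fixing $0$ and $\infty$, so $\psi$ restricted to $\C^*$ lifts through the exponential), and one must make sure the Hölder constant extracted from \cite{astala2008elliptic} is genuinely uniform over all $K$-quasiconformal $\psi$ with the given normalization at $z_0$ — i.e.\ that the chaining argument does not accumulate a constant depending on $|z|$. Since the Hölder exponent $1/K$ is what controls the power $|z|^{K_1^{-1}}$, and the additive constant from each link in the chain can be absorbed because there are $O(\ln|z|)$ links each contributing $O(1)$ to an \emph{additive} (not multiplicative) error in $\log$-coordinates, this works out, but it is the point that needs the most care. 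I would present it by stating the one-step Hölder estimate as a black box, then doing the chaining explicitly in logarithmic coordinates.
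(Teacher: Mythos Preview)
The paper does not prove this lemma at all; it is stated with a citation to \cite{el} and used as a black box. So there is no ``paper's proof'' to compare against, and your task is really to produce a self-contained argument.

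Your overall strategy of lifting to $\Phi(w)=\log\psi(e^{w})$ is exactly right, and is indeed what \cite{el} does. However, the chaining step as you describe it has a genuine gap. You claim that along a segment from $w_{0}$ to $w$ there are $O(\ln|z|)$ links ``each contributing $O(1)$''. But the local H\"older constant of a $K$-quasiconformal map on a unit disk $D(w_{j},1)$ is controlled by $\diam \Phi(D(w_{j},1))$, and nothing you have said bounds this uniformly in $j$. In fact, for a generic $K$-qc map of $\C$ (say the radial stretch $w\mapsto w|w|^{K-1}$) the per-link increment grows and the chain blows up; your linear bound $\re\Phi(w)\le K|w-w_{0}|+C'$ would be false for such a map.

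What rescues the argument is the one property of $\Phi$ you never mention: because $\psi$ is a single-valued homeomorphism of $\C^{*}$, the lift satisfies $\Phi(w+2\pi i)=\Phi(w)+2\pi i$. This periodicity is the whole point. It gives, at every $w_{j}$, the fixed-scale normalization $|\Phi(w_{j}+2\pi i)-\Phi(w_{j})|=2\pi$, and then global quasisymmetry (three-point condition, depending only on $K$) yields $|\Phi(w_{j}+1)-\Phi(w_{j})|\le C(K)$ uniformly, which is exactly your missing ``$O(1)$ per link''. Equivalently, one can compare the modulus of the rectangle $[\re w_{0},\re w]\times[0,2\pi]$ with that of its image, a topological quadrilateral whose two horizontal sides differ by the translation $2\pi i$; the $K$-quasi-invariance of modulus then gives \eqref{eqtn:ELLem4Eq1} directly, and a similar extremal-length argument on the orthogonal family gives \eqref{eqtn:ELLem4Eq2}. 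Either way, you need to state and use the periodicity explicitly; without it the proof does not go through.
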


Finally, we close this preliminary subsection with the following lemma, which we will use frequently:

\begin{lem}\label{lem:Gopendiscrete}
	Let $D \times \chat : (\lam,z) \mapsto \psi_\lam(z)$ be a holomorphic motion of $\chat$ over a domain $D \subset \chat$, where the $\psi_{\lam}: \chat \to \chat$ are quasiconformal homeomorphisms. Let 
	$g: D \to \chat$ be a non-constant holomorphic function, and assume that there exists $\lam_0 \in D$ such that for all $\lam \in D$,
	$\psi_\lam(g(\lam_0))=g(\lam_0)$. Let $G(\lam):=\psi_\lam^{-1} \circ g(\lam)$.
	Then there is a neighborhood $V \subset D$ of $\lam_0$ such $G_{|V} : V \to \chat$ is open and discrete.
\end{lem}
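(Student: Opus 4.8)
\textbf{Proof proposal for Lemma~\ref{lem:Gopendiscrete}.}

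The plan is to reduce the statement to a classical local fact about nonconstant holomorphic maps: a nonconstant holomorphic map between Riemann surfaces is open and discrete. The map $G(\lam)=\psi_\lam^{-1}\circ g(\lam)$ is only continuous in $\lam$ a priori (the $\psi_\lam$ are merely quasiconformal in the space variable, and the motion is continuous jointly), so the main point is to show that near $\lam_0$ it is in fact holomorphic, after which openness and discreteness come for free. First I would normalize: set $w_0:=g(\lam_0)$, so by hypothesis $\psi_\lam(w_0)=w_0$ for all $\lam\in D$, hence $G(\lam_0)=w_0$. Working in local coordinates on $\chat$ centered at $w_0$ on the target side, I want to analyze $G$ near $\lam_0$.

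The key step is to argue that $\lam\mapsto G(\lam)=\psi_\lam^{-1}(g(\lam))$ is holomorphic on a neighborhood $V$ of $\lam_0$. The intended mechanism: for $z$ in a fixed small punctured neighborhood of $w_0$, the map $(\lam,z)\mapsto \psi_\lam(z)$ restricted to parameters near $\lam_0$ is a holomorphic motion of that neighborhood fixing the point $w_0$; by the $\lam$-Lemma (Theorem~\ref{lem:la-lemma}) it extends to a holomorphic motion of the closure, and for each fixed $z$ the map $\lam\mapsto\psi_\lam(z)$ is holomorphic. One should express $G$ via an implicit-equation argument analogous to the one used in the proof of Theorem~\ref{thm:natural}: the relation $\psi_\lam(G(\lam))=g(\lam)$ together with holomorphic dependence of both $\lam\mapsto\psi_\lam(z)$ (for fixed $z$) and $\lam\mapsto g(\lam)$, plus the fact that $\psi_\lam$ is a homeomorphism (hence $G$ is the genuine, single-valued inverse image), should force $\lam\mapsto G(\lam)$ to be holomorphic. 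Concretely: near $\lam_0$ the point $G(\lam)$ stays in a small disk $\Delta$ around $w_0$; on that disk write $\psi_\lam$ in the form given by the $\lam$-Lemma, and use Hartogs/Osgood-type reasoning (separate holomorphicity in $\lam$ for each fixed target point, continuity in the remaining variable, and the injectivity of $\psi_\lam$) to conclude that the solution $z=G(\lam)$ of $\psi_\lam(z)=g(\lam)$ depends holomorphically on $\lam$.

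Once $G$ is known to be holomorphic on $V$, it is nonconstant: if $G$ were constant equal to some $c$, then $g(\lam)=\psi_\lam(c)$ for all $\lam\in V$; but $g$ is a nonconstant holomorphic function, whereas $\lam\mapsto\psi_\lam(c)$, being a coordinate of a holomorphic motion, would have to be nonconstant in $\lam$ only if $c\ne w_0$, and in either case one gets a contradiction — if $c=w_0$ then $g\equiv w_0$ contradicting nonconstancy, and if $c\ne w_0$ then comparing with the hypothesis $\psi_\lam(w_0)\equiv w_0$ one sees $g$ omits a value and is pinned in a way incompatible with being nonconstant holomorphic unless it is constant (alternatively: $G(\lam_0)=w_0$ and $g\not\equiv$ const forces $G\not\equiv$ const directly, since $\psi_{\lam_0}=\id$). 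Then the classical open mapping theorem for nonconstant holomorphic maps between Riemann surfaces gives that $G_{|V}$ is open, and discreteness of fibers follows from the identity principle (the zero set of $\lam\mapsto G(\lam)-a$ is discrete for every $a$, after shrinking $V$ to be connected).

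\textbf{Main obstacle.} The genuinely delicate point is the holomorphic dependence of $G$ on $\lam$: the $\psi_\lam$ are not holomorphic in the space variable, so one cannot differentiate $\psi_\lam(G(\lam))=g(\lam)$ naively, and one must instead exploit that a holomorphic motion fixing a point is, for each \emph{fixed} target point, holomorphic in the parameter, and combine this with the homeomorphism property to extract a single-valued holomorphic branch $G$. Setting up this implicit-function/separate-holomorphicity argument cleanly — in particular ensuring $G(\lam)$ remains in a neighborhood where the relevant branch of the inverse is controlled — is where the real work lies; everything after that is standard complex analysis.
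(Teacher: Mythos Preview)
Your approach has a genuine gap: the central claim that $G$ is holomorphic near $\lam_0$ is \emph{false} in general, so the strategy of reducing to the open mapping theorem for holomorphic maps cannot work. A concrete counterexample: take $D=\D$, $\lam_0=0$, $g(\lam)=\lam$, and $\psi_\lam(z)=z+\lam\bar z$ (extended to $\chat$ by fixing $\infty$). This is a holomorphic motion of $\chat$ with $\psi_\lam(0)=0$ for all $\lam$, and a direct computation gives
\[
G(\lam)=\psi_\lam^{-1}(\lam)=\frac{\lam(1-\bar\lam)}{1-|\lam|^2}=\lam-\lam\bar\lam+O(|\lam|^3),
\]
which is not holomorphic on any neighborhood of $0$. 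The implicit-function/Hartogs reasoning you sketch cannot go through because $F(\lam,z)=\psi_\lam(z)-g(\lam)$ is holomorphic in $\lam$ for each fixed $z$ but only quasiconformal in $z$; separate holomorphicity in only one variable is not enough to make the solution $z=G(\lam)$ holomorphic. (In fact $G$ is quasiregular, as the paper remarks, but that requires a separate argument.)

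The paper's proof avoids this trap entirely by never asserting anything about regularity of $G$ itself. Instead, it works with the \emph{family} of holomorphic functions $F_y(\lam):=\psi_\lam(y)-g(\lam)$, one for each fixed $y$, and notes that $G(\lam)=y$ if and only if $F_y(\lam)=0$. The hypothesis $\psi_\lam(g(\lam_0))\equiv g(\lam_0)$ ensures $F_{y_0}(\lam)=g(\lam_0)-g(\lam)$ is non-constant (since $g$ is non-constant), hence $F_y$ is non-constant for $y$ in a neighborhood $W$ of $y_0=g(\lam_0)$. Openness then follows from Hurwitz's theorem: if $F_{y_1}(\lam_1)=0$, then for $y$ near $y_1$ the function $F_y$ still has a zero near $\lam_1$. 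Discreteness is immediate, since $G^{-1}(\{y\})$ is the zero set of the non-constant holomorphic function $F_y$. The key conceptual move you are missing is to shift holomorphicity from $G$ to this auxiliary one-parameter family indexed by the \emph{target} variable.
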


\begin{proof}
	We first observe that $G(\lam)=y \iff \psi_{\lam}(y)-g(\lam)=0$.
	Let $F_y(\lam):=\psi_{\lam}(y)-g(\lam)$: then $F_y$ is a holomorphic map, which depends continuously on $y \in \rs$. Moreover, it follows from the assumption on $\lam_0$ that $F_{y_0}$ is non-constant, where $y_0:=g(\lam_0)$. Therefore, there is an open neighborhood $W$ of $g(\lam_0)$ such that for all $y \in W$, $F_y$ is non-constant. We let $V:=G^{-1}(W)$, which is open since $G$ is continuous.
	
	Let us first prove that $G$ is open. Let $\lam_1 \in V$ and $y_1:=G(\lam_1)$. 
	Then by the previous observation, $F_{y_1}(\lam_1)=0$. Moreover, since $y \mapsto F_y$ is continuous, Hurwitz's theorem implies that all $y$ close enough to $y_1$, $F_y$ has a zero $\lam$ (close to $\lam_1$); in other words, $G(\lam)=y$. Therefore $G$ is indeed open.
	
	Moreover, $G^{-1}(\{y\})$ is the set of zeroes of $F_y$, so it is discrete.
\end{proof}

\begin{rem}\label{rem:Gbranched}
	By a result due to Stoilow \cite{stoilow1932proprietes}, Lemma \ref{lem:Gopendiscrete} 
	implies that there exists homeomorphisms $h_1$ and $h_2$ defined over respective neighborhoods of $\lam_0$ and $G(\lam_0)$ such that $h_2 \circ G \circ h_1^{-1}(\lam)=\lam^m$, for some $m \geq 1$.
	In other words, $G$ is locally a finite degree (possibly) branched cover. In particular, it is always possible to lift curves in a neighborhood of $G(\lam_0)$, although the lift is a priori not unique if $m \geq 2$.
\end{rem}

Although we do not require it, it is possible to prove by standard arguments that the map $G$ is in fact quasiregular.

\subsection{A shooting Lemma} \label{sect:shoot}

In Section \ref{sec:density} we shall prove that certain sets of parameters are dense  in the bifurcation locus. To that end, we will need the fact that,  if some singular value $v(\la_0)$ is mapped to infinity in finitely many steps (i.e. $f_{\lam_0}^n(v(\lam_0))=\infty$ for $n \geq 0$ and $f_{\lam}^n(v(\lam))\not \equiv \infty$ on $M$),  then we can find nearby parameters for which $v(\lam)$ has some prescribed dynamical behaviour. Similar results can be proven in the rational setting using Montel's Theorem together with the non-normality of the family of iterates of the active singular value. 
In our setting in which $f: \C \to \hat{\C}$ is a transcendental meromorphic map, and $U \subset \C$ is a domain, the singular value $v_\lambda$ could be active because its family of iterates $\{f_\lambda(v_\lambda)\}_{n\in\N}$ is not defined in a parameter neighborhood of $\lambda_0$ rather than not being normal. As a consequence, one cannot always apply Montel's Theorem  as for entire maps or rational maps. Its role will be played by the following statement, which holds for any natural family of meromorphic maps. Notice that here we do not have assumptions on the set of singular values so that a priori functions could be in  the general class of meromorphic transcendental functions.  Nevertheless, observe that the Proposition is only meaningful when  there exists at least one non-omitted pole.

\begin{prop}[Shooting Lemma]\label{pseudomontel}
Let $\{f_\la\}_{\la\in M}$ be a natural family of meromorphic maps.\ 
Let $\la_0\in M$  be such that a singular value $v_\la$ satisfies $f_{\lam_0}^n(v_{\la_0})=\infty$,  but this relation is not satisfied for all $\la\in M$. 
 Let $\lambda \mapsto \gamma(\la)$ be a  holomorphic map  such that $\gamma(\la_0) \notin S(f_{\la_0})$. Then we can find $\la'$ arbitrarily close to $\la_0$  such that
$f_{\lam'}^{n+1}(v_{\la'}) = \gamma(\la')$.
\end{prop}
 Observe that the map $\gamma$, which is a holomorphic map from a neighborhood of $\lambda_0$ in $M$ to $\hat{\C}$, is allowed to be constant. In particular, if $\infty$ is not a singular value of $f_{\la_0}$, by taking $\gamma(\la)\equiv \infty$ we obtain that the  parameter $\la_0$ is a limit of parameters $\la_k$ for which $v(\la_k)$ is a prepole of order $n+1$.

\begin{rem*}
A weaker version of Proposition \ref{pseudomontel} (allowing $n+2$ instead of $n+1$) could  be proven by showing that the map $\la \mapsto f_\la^{n+1}(v(\la))$ has an essential singularity at $\la_0$, and applying the Great Picard's Theorem. This was pointed out to us by R. Roeder.
\end{rem*}

The proof of Proposition~\ref{pseudomontel} uses the following lemmas. The first  one can be found in  \cite[Lemma 13]{bfjk18} (see also \cite[Lemma 4.6]{benfag} for a more general statement). In the following, let us denote by $\w(\sigma(t),P)$ the winding number of a curve $\sigma(t)$ with respect to a point $P$.

\begin{lem}[Computing winding numbers] \label{homolemma}
Let   $\gamma, \sigma:[0,1] \to \C$ be  two disjoint closed curves and let $P_\gamma \in \gamma$ and $P_\sigma \in \sigma$ be arbitrary points.  Then
 \begin{equation}\label{lem:wind}
 \w(\sigma(t)-\gamma(t),0)=\w(\gamma(t),P_\sigma) + \w(\sigma(t),P_\gamma).
 \end{equation}
\end{lem}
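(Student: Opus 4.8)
The plan is to regard $\w(\sigma-\gamma,0)$ as the winding number around $0$ of the loop obtained by restricting to the diagonal the two‑variable map $F(s,t):=\sigma(s)-\gamma(t)$, exploiting that, since $\sigma$ and $\gamma$ have \emph{disjoint images}, one has $\sigma(s)\neq\gamma(t)$ for \emph{all} $s,t\in[0,1]$ (not only for $s=t$); hence $F$ is a well‑defined continuous map $[0,1]^2\to\C^{*}$. Two preliminary remarks: first, the right‑hand side of \eqref{lem:wind} does not depend on the choices of $P_\gamma\in\gamma$ and $P_\sigma\in\sigma$, because $\sigma([0,1])$ is connected and disjoint from $\gamma([0,1])$, hence lies in a single connected component of $\C\setminus\gamma([0,1])$, on which $P\mapsto\w(\gamma,P)$ is constant, and symmetrically for $\w(\sigma,\cdot)$; so we may assume $P_\gamma=\gamma(0)=\gamma(1)$ and $P_\sigma=\sigma(0)=\sigma(1)$. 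Second, $F(0,0)=\sigma(0)-\gamma(0)=\sigma(1)-\gamma(1)=F(1,1)$, so $F$ maps the diagonal path $\delta(t)=(t,t)$ to a genuine loop, based at $\sigma(0)-\gamma(0)$.

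Next I would compare, inside the convex square $[0,1]^2$, the path $\delta$ with the ``staircase'' path $\beta$ that first runs along the bottom edge $s\mapsto(s,0)$ and then along the right edge $t\mapsto(1,t)$. Both join $(0,0)$ to $(1,1)$, so by convexity they are homotopic rel endpoints; composing with $F$ yields two loops in $\C^{*}$, both based at $\sigma(0)-\gamma(0)$, that are homotopic as based loops and hence have the same winding number about $0$. On one side, $\w(F\circ\delta,0)=\w(\sigma-\gamma,0)$ is the left‑hand side of \eqref{lem:wind}. On the other, since $\sigma$ and $\gamma$ are closed, $F\circ\beta$ is the concatenation of the two loops $s\mapsto\sigma(s)-\gamma(0)$ and $t\mapsto\sigma(1)-\gamma(t)$, so by additivity of the winding number under concatenation,
\[
\w(F\circ\beta,0)=\w\bigl(s\mapsto\sigma(s)-\gamma(0),\,0\bigr)+\w\bigl(t\mapsto\sigma(1)-\gamma(t),\,0\bigr).
\]

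Finally I would identify the two terms. The first loop is a translate of $\sigma$, so its winding number about $0$ equals $\w(\sigma,\gamma(0))=\w(\sigma,P_\gamma)$. The second loop is the image of $t\mapsto\gamma(t)-\sigma(1)$ under $z\mapsto-z$, which is homotopic to the identity on $\C^{*}$ (through the rotations $z\mapsto e^{i\theta}z$) and therefore preserves winding numbers about $0$; hence that term equals $\w(\gamma,\sigma(1))=\w(\gamma,P_\sigma)$. Combining the three displays gives \eqref{lem:wind}. There is no deep obstacle: the argument uses only standard facts about winding numbers (homotopy invariance, additivity under concatenation), and the single point that needs care is the orientation bookkeeping — the direction in which each edge of the square is traversed and the harmless sign coming from $z\mapsto-z$. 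Equivalently, one can phrase everything homologically: $F$ descends to a map $\T^2\to\C^{*}$, the diagonal class in $H_1(\T^2)$ is the sum of the two coordinate classes, and $\w(\cdot,0)$ is the canonical isomorphism $H_1(\C^{*})\cong\Z$ precomposed with $F_*$.
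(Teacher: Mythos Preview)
Your proof is correct. The paper does not actually give its own proof of this lemma; it simply cites \cite[Lemma 13]{bfjk18} (and \cite[Lemma 4.6]{benfag} for a generalization). Your argument via the two-variable map $F(s,t)=\sigma(s)-\gamma(t)$ on the torus, comparing the diagonal loop with the concatenation of the two edge loops, is the standard clean way to see this identity, and every step checks out: the disjointness of the \emph{images} ensures $F$ lands in $\C^*$, homotopy invariance handles the diagonal-versus-staircase comparison, and the sign from $z\mapsto -z$ in the second edge term is correctly dealt with. Your preliminary remark that the right-hand side is independent of the choice of $P_\gamma$ and $P_\sigma$ is also a useful observation that makes the reduction to $P_\gamma=\gamma(0)$, $P_\sigma=\sigma(0)$ legitimate.
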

As a consequence, we obtain the following. 

\begin{lem}[Fixed point theorem] \label{shooting} Let $V$ be a Jordan domain, and let $f,g$ be holomorphic functions in a neighborhood of $\overline{V}$. Suppose that  $g(\overline{V})\subset f(V)$ and  $g(\partial V)\cap f(\partial V)=\emptyset$. Then there exists $\lambda\in V$ such that $f(\lambda)=g(\lambda)$.
\end{lem}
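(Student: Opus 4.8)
The statement to prove is Lemma~\ref{shooting}: given a Jordan domain $V$ and holomorphic functions $f,g$ on a neighborhood of $\overline V$ with $g(\overline V)\subset f(V)$ and $g(\partial V)\cap f(\partial V)=\emptyset$, there exists $\lambda\in V$ with $f(\lambda)=g(\lambda)$. The natural approach is an argument principle / winding number computation applied to the curve $\sigma(t)-\gamma(t)$ where $\gamma:=f\circ(\partial V)$ and $\sigma:=g\circ(\partial V)$ are the boundary image curves; the hypothesis $g(\partial V)\cap f(\partial V)=\emptyset$ is precisely what makes these two curves disjoint, so Lemma~\ref{homolemma} applies directly.

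\textbf{Key steps.} First I would parametrize $\partial V$ as a closed curve $t\mapsto\partial V(t)$, $t\in[0,1]$, traversed once positively, and set $\gamma(t):=f(\partial V(t))$ and $\sigma(t):=g(\partial V(t))$. These are closed curves in $\C$, and by hypothesis they are disjoint. Pick base points $P_\gamma\in\gamma$ and $P_\sigma\in\sigma$; concretely one can take $P_\gamma=f(\lambda_1)$ and $P_\sigma=g(\lambda_2)$ for suitable $\lambda_1,\lambda_2\in\partial V$. The number of solutions of $f(\lambda)=g(\lambda)$ inside $V$, counted with multiplicity, equals $\w\big(\sigma(t)-\gamma(t),0\big)$ by the argument principle applied to the holomorphic function $f-g$ on $V$ (here one uses that $f-g$ has no zeros on $\partial V$, which is exactly the disjointness hypothesis, so the winding number is well-defined and equals the zero count). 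Next, apply Lemma~\ref{homolemma} to get
\[
\w\big(\sigma(t)-\gamma(t),0\big)=\w\big(\gamma(t),P_\sigma\big)+\w\big(\sigma(t),P_\gamma\big).
\]
Now I estimate each term on the right. Since $g(\overline V)\subset f(V)$, every point of $\sigma=g(\partial V)$ lies in $f(V)$, and in particular $P_\sigma=g(\lambda_2)\in f(V)$; moreover $P_\sigma\notin\gamma=f(\partial V)$ because the curves are disjoint. The winding number $\w(\gamma(t),P_\sigma)$ counts, with multiplicity, the number of $\lambda\in V$ with $f(\lambda)=P_\sigma$, which is at least $1$ since $P_\sigma\in f(V)$, hence $\w(\gamma(t),P_\sigma)\geq 1$ (this uses that $f$ is holomorphic, so the local degree is positive). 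For the other term, $P_\gamma=f(\lambda_1)\in f(\partial V)$; I claim $P_\gamma\notin g(\overline V)$, because $g(\overline V)\subset f(V)$ and $f(V)\cap f(\partial V)$ need not be empty — so here I should instead choose $P_\gamma$ more carefully, or argue differently: the cleaner route is that $\sigma(t)=g(\partial V(t))$ bounds (in the sense of winding) the region $g(V)$ from its exterior, and since $P_\gamma\in f(\partial V)$ is disjoint from $\sigma$ but $g(V)\subset f(V)$ which may or may not contain $P_\gamma$. To avoid this subtlety, I would instead bound $\w(\sigma(t),P_\gamma)$ by a homotopy argument: since $g(\overline V)$ is a compact set disjoint from the point set $\{P_\gamma\}$ only if $P_\gamma\notin g(\overline V)$; failing that, note $\w(\sigma,P_\gamma)\geq 0$ is not automatic. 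The correct and standard fix: take $P_\gamma$ to be any point of $f(\partial V)$ — actually the sign issue is handled by observing that both $f$ and $g$ are holomorphic so \emph{all} winding numbers of $f$- and $g$-image curves around points in their respective images are nonnegative, and around points outside the closed image are zero; combined with $P_\sigma\in f(V)$ giving $\w(\gamma,P_\sigma)\geq 1$, and $\w(\sigma,P_\gamma)\geq 0$ always (as $P_\gamma$ is either outside $g(\overline V)$, giving $0$, or inside $g(V)$, giving a positive integer), we conclude $\w(\sigma-\gamma,0)\geq 1>0$, so $f-g$ has a zero in $V$.

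\textbf{Main obstacle.} The delicate point is the sign/positivity bookkeeping for the two winding numbers on the right-hand side of Lemma~\ref{homolemma}, specifically ensuring $\w(\gamma(t),P_\sigma)\geq 1$ and $\w(\sigma(t),P_\gamma)\geq 0$ with the correct orientation conventions, and handling the case where $P_\gamma$ happens to lie in $g(V)$ versus outside $g(\overline V)$. This is where one must use holomorphicity essentially — that preimages under a holomorphic map are counted with positive multiplicity, so image curves of holomorphic functions always wind nonnegatively around points, and strictly positively around points genuinely in the image of the interior. Once that positivity is in place, the conclusion $\w(\sigma(t)-\gamma(t),0)\geq 1$ is immediate and the argument principle finishes the proof. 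Everything else — parametrizing $\partial V$, invoking Lemma~\ref{homolemma}, invoking the argument principle — is routine.
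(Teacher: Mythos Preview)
Your approach is correct and essentially the same as the paper's: both apply the argument principle to $f-g$, decompose the resulting winding number via Lemma~\ref{homolemma}, and use $P_\sigma\in f(V)$ to get $\w(\gamma,P_\sigma)\geq 1$. The only difference is in the second term: the paper argues $\w(\sigma,P_\gamma)=0$ by observing that $g(\partial V)$ lies in a single (necessarily simply connected) bounded complementary component of $f(\partial V)$, whereas you obtain $\w(\sigma,P_\gamma)\geq 0$ directly from the argument principle applied to $g$ --- your route is arguably more direct and avoids the topological fact about complementary components.
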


\begin{proof}
Consider the map $F(\la)=f(\la)-g(\la)$. Let $\la(t), t\in[0,1]$ be a parametrization of $\partial V$, and notice that $f(\la(t))$ and $g(\la(t))$ are two disjoint curves and hence $F(\la(t))\neq 0$ for every $t\in[0,1]$. By the Argument Principle, if the winding number of $F(\la(t))$ with respect to 0 is positive, then $F$ has at least one zero in $V$. 

Let $P_f=f(\la(0))$ and $P_g=g(\la(0))$. Applying Lemma \ref{homolemma} we get
\[
\w(F(\la(t)),0)=\w(f(\la(t))-g(\la(t)),0)= \w(g(\lambda(t)), P_f) + \w(f(\lambda(t)), P_g).
\] 
The hypothesis $g(\overline{V})\subset f(V)$ implies that the curve $g(\lambda(t))$ lies inside a bounded connected component of the complement of $f(\lambda(t))$ from which we deduce that $ \w(g(\lambda(t)), P_f)=0$. The same hypothesis also implies that $P_g\in f(V)$ which means, again by the Argument Principle,  that $\w(f(\la(t))- P_g),0) = \w(f(\la(t)), P_g) \geq 1$.
Hence $\w(F(\la(t)),0)>0$ and the conclusion follows.
\end{proof}

We will also need the following well known fact.

\begin{lem}[Shrinking of holomorphic images]\label{shrink}
 Let $U\subset\C$ be an open set and $a,b\in\C$. Suppose $\{\phi_n:U\to\C\setminus \{a,b\}\}_{n \in \N}$ is a sequence of holomorphic maps such that $\phi_{n}(u_0)\to \infty$  for a certain $u_0\in U$. Then for every compact set $K\subset U$, the spherical diameter of $\phi_{n}(K)$ tends to 0. 
\end{lem}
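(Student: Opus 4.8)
The statement to prove is Lemma~\ref{shrink}: for a sequence of holomorphic maps $\phi_n: U \to \C \setminus \{a,b\}$ with $\phi_n(u_0) \to \infty$, the spherical diameter of $\phi_n(K)$ tends to $0$ on every compact $K \subset U$.

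\medskip

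The plan is to invoke Montel's theorem on the thrice-punctured sphere together with the spherical Arzelà--Ascoli / normal family machinery. First I would fix a compact connected set $K \subset U$ containing $u_0$ (it suffices to treat connected $K$, since a general compact can be covered by finitely many connected compact pieces, and one can always enlarge $K$ so that it is connected and still contains $u_0$, because $U$ is open and we may pass to a connected open subset containing $K \cup \{u_0\}$ whose closure is compact in $U$). The family $\{\phi_n\}$ maps $U$ into $\chat \setminus \{a,b,\infty\}$, which is hyperbolic, so by Montel's theorem $\{\phi_n\}$ is a normal family of maps $U \to \chat$. Hence every subsequence has a further subsequence converging locally uniformly (in the spherical metric) to some holomorphic map $\phi: U \to \chat$ (where ``holomorphic into $\chat$'' includes the possibility $\phi \equiv \infty$).

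\medskip

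The key step is to identify the limit: since $\phi_n(u_0) \to \infty$, any locally uniform subsequential limit $\phi$ satisfies $\phi(u_0) = \infty$. But the limit $\phi$ takes values in $\chat \setminus \{a,b\}$ (a locally uniform limit of maps avoiding $a$ and $b$ either avoids both or is constantly equal to one of them, by Hurwitz; since it hits $\infty \notin \{a,b\}$ it is not constantly $a$ or $b$); moreover, being a non-constant such map would be an open map, but actually the cleanest route is: $\phi$ is a holomorphic map $U \to \chat$ with $\phi(u_0) = \infty$, and if $\phi$ were non-constant it would still be a legitimate meromorphic function — so I instead argue that $\phi$ must be the constant $\infty$. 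Indeed, suppose $\phi$ is non-constant; then $\phi^{-1}(\{a\})$ and $\phi^{-1}(\{b\})$ are discrete, and near any point where $\phi$ is finite and not equal to $a,b$, Hurwitz's theorem would force $\phi_n$ to take values near $a$ — no. Let me use the standard fact directly: a locally uniform limit of holomorphic maps into $\chat \setminus \{a,b,\infty\}$ is a holomorphic map into $\chat \setminus \{a,b,\infty\}$ \emph{or} a constant in $\{a,b,\infty\}$ (this is Hurwitz applied to the three omitted values, or equivalently the fact that $\chat \setminus \{a,b,\infty\}$ is complete hyperbolic so its closure in the space of maps adds only the punctures as constants). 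Since $\phi(u_0) = \infty$, the limit is the constant $\infty$.

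\medskip

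Having shown that every subsequential limit is the constant $\infty$, it follows that $\phi_n \to \infty$ locally uniformly on $U$ in the spherical metric (a sequence in a compact metric space — here the space of continuous maps $K \to \chat$ with the sup-spherical-metric, which is precompact by normality — all of whose convergent subsequences have the same limit must itself converge). In particular $\sup_{z \in K} \dist_{\chat}(\phi_n(z), \infty) \to 0$, and therefore $\diam_{\chat}(\phi_n(K)) \le 2 \sup_{z\in K} \dist_{\chat}(\phi_n(z),\infty) \to 0$. The main (and really only) subtlety is the normal-family argument that pins down the limit as the \emph{constant} $\infty$ rather than some non-constant meromorphic function through $\infty$; this is exactly where the omitted values $a,b$ are used, via Montel/Hurwitz on the thrice-punctured sphere. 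Everything else is routine.
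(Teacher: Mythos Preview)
Your proof is correct and follows essentially the same approach as the paper: Montel's theorem gives normality, any subsequential limit must equal $\infty$ at $u_0$, and Hurwitz (applied to the omitted value $\infty$) forces the limit to be identically $\infty$, whence locally uniform convergence to $\infty$ and the diameter conclusion. The detour in your third paragraph is unnecessary---the cleanest line is exactly the one you eventually take (limit of maps omitting $\infty$ that hits $\infty$ must be constantly $\infty$), and the reduction to connected $K$ containing $u_0$ is also superfluous since you prove $\phi_n \to \infty$ locally uniformly on all of $U$.
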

\begin{proof}
We claim that $\{\phi_n\}_{n \in \N}$ converges locally uniformly to $\infty$.
By Montel's Theorem, the sequence $\{\phi_n\}_{n \in \N}$ admits converging subsequences.  Let $\{\phi_{n_k}\}_{k \in \N}$ be any such  subsequence, and let $\phi: U \to \hat{\C}$ be the limit function.
Since by assumption for all $k \in \N$, $\infty \notin \phi_{n_k}(U)$ and $\phi(u_0)=\infty$, it follows from Hurwitz's Theorem that $\phi \equiv \infty$. Since this holds for any converging subsequence, we have $\lim_{n \rightarrow \infty} \phi_n=\infty$, and the lemma follows.
\end{proof}
%


Observe that if the singular value   $v_{\la_0}$
 is a prepole of order $n$,
then the map $\la \mapsto f_\la^n(v_\la)$ is a well defined meromorphic map in a sufficiently small neighborhood of $\la_0$, with an isolated pole at $\la_0$. Indeed, if a sequence of such parameters of order equal to $n$ were to accumulate at $\la_0$, by the discreteness of zeros of holomorphic functions we would have that  $\la\mapsto f_\la^k(v_\la)$ is identically equal to $\infty$ for some $k\leq n$, which contradicts the assumption that this is not a persistent condition. Also impossible would be an approximating sequence of  likewise parameters of order strictly less than $n$ since, by continuity, the order of $\la_0$ would also need to be strictly less than $n$, also a contradiction. 
 As a consequence of this fact, $\la\mapsto f_\la^{n+1}(v_\la)$ has an essential singularity at $\la_0$. 

We are now ready to prove Proposition~\ref{pseudomontel}.
\begin{proof}[Proof of Proposition \ref{pseudomontel}]

First, we pick an arbitrary one-dimensional slice containing $\lam_0$ in the parameter space $M$ on which $\lam \mapsto f_\lam^n(v(\lam))$ is not constant, and we identify $M$ with $\D(\lam_0,1) \subset \C$ in the rest of the proof.

By assumption $f_\la=\phi_\la\circ f\circ \psi_\la^{-1}$ and we may assume without loss of generality that $\phi_{\la_0}=\psi_{\la_0}={\rm Id}$ and hence $f=f_{\la_0}$. Let $D$ be a disk centered at $\gamma(\la_0)$ such that $\overline{D}$ is disjoint from $S(f_{\la_0})$ and let $\delta>0$ be such that $\gamma(\overline{\D(\la_0,\delta)}) \subset D$ (see Figure~\ref{fig:pseudomontel}).

Decreasing $\delta$ if necessary, the function $G(\la):=\psi_\la^{-1} ( f_\la^n(v_\la))$ satisfies the assumptions of Lemma \ref{lem:Gopendiscrete} (and of Remark \ref{rem:Gbranched}) on $\D(\la_0,\delta)$,
with $g(\lam):= f_\la^n(v_\la)$ and $g(\lam_0)=\infty$.
 It follows that $G(\D(\la_0,\delta))$ contains a disk of spherical radius say $\epsilon>0$ centered at $\infty$.  

Since there are no singular values in $\overline{D}$ and $f_{\la_0}$ has infinite degree, there are infinitely many univalent preimages of $D$ under $f_{\la_0}$  which must accumulate at infinity. Observe that these preimages must miss, for example, a given periodic orbit of period 3 which does not intersect $D$. Hence, selecting a subset of those preimages if necessary, we may assume (see Lemma \ref{shrink}) that they are all  bounded and that in fact their spherical diameter tends to 0. Let $U$ be one such preimage contained in $\D_s(\infty,\epsilon)$. Thus $f_{\la_0}(U)=D$. 

Since $U$ belongs to the image of $G$, we let $V$ denote a connected component of $G^{-1}(U)$ inside $\D(\la_0,\delta)$. If $D$
(and therefore $U$) is small enough, then $V$ is a Jordan domain as well.
Let us now define $F(\la):= f_\la^{n+1}(v_\la)$. Our goal is to show that $\overline{\gamma(V)} \subset F(V) $ so that Lemma \ref{shooting} applied to $\gamma$ and $F$ gives the result. 

In order to see this we write
\[
f_\la^{n+1}(v_\la) = \phi_\la \circ f_{\la_0} \circ \psi_\la^{-1} \circ f_\la^n (v_\la) = \phi_\la \circ f_{\la_0} \circ G(\la),
 \]
and therefore
\[
F (V) = \phi_\la ( f_{\la_0} (G(V)))= \phi_\la(f_{\la_0}(U))=\phi_\la(D).
\]
Now since $\delta$ can be taken arbitrarily small, the values of $\la$ can be arbitrarily close to $\la_0$ and therefore $\phi_\la$ is arbitrarily close to the identity. It follows that $F (V)=\phi_\la(D) \simeq D$, while $\gamma(V) \subset \gamma(\overline{\D(\la_0,\delta)}) \subset D$. { Moreover, $\partial \gamma(\overline{\D(\la_0,\delta)}) $ separates the boundaries of these two sets, so the hypotheses of Lemma \ref{shooting} can be applied and we are done.}

\begin{figure}[hbt!]
\begin{center}
\def\svgwidth{\textwidth}
\begingroup%
  \makeatletter%
    \setlength{\unitlength}{\svgwidth}%
  \begin{picture}(1,0.4129605)%
    \put(0,0){\includegraphics[width=\unitlength]{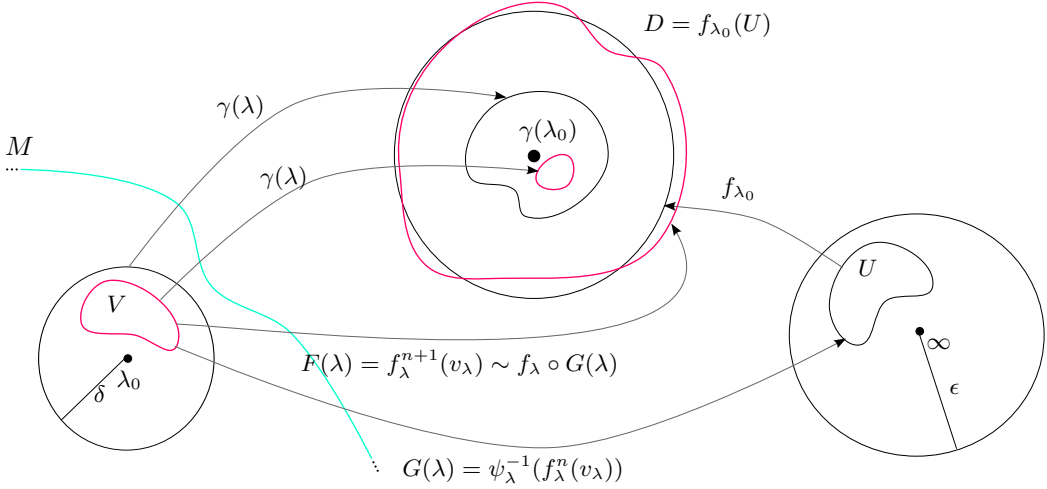}}%
    \put(0.09409166,0.07){\color[rgb]{0,0,0}\makebox(0,0)[lb]{\footnotesize{$\lambda_0$}}}%
    \put(0.4345913,0.28){\color[rgb]{0,0,0}\makebox(0,0)[lb]{\footnotesize{$\gamma(\lambda_0)$}}}%
    \put(0.77932241,0.10462194){\color[rgb]{0,0,0}\makebox(0,0)[lb]{\footnotesize{$\infty$}}}%
    \put(0.075,0.05945605){\color[rgb]{0,0,0}\makebox(0,0)[lb]{\footnotesize{$\delta$}}}%
    \put(0.086,0.136){\color[rgb]{0,0,0}\makebox(0,0)[lb]{\footnotesize{$V$}}}%
    \put(0.54,0.37){\color[rgb]{0,0,0}\makebox(0,0)[lb]{\footnotesize{$D=f_{\lambda_0}(U)$}}}%
    \put(0,0.27){\color[rgb]{0,0,0}\makebox(0,0)[lb]{\small{$M$}}}%
    \put(0.17906989,0.3){\color[rgb]{0,0,0}\makebox(0,0)[lb]{\footnotesize{$\gamma(\lambda)$}}}%
     \put(0.2156329,0.24){\color[rgb]{0,0,0}\makebox(0,0)[lb]{\footnotesize{$\gamma(\lambda)$}}}%
    \put(0.79860981,0.06604038){\color[rgb]{0,0,0}\makebox(0,0)[lb]{\footnotesize{$\epsilon$}}}%
    \put(0.72142123,0.16658866){\color[rgb]{0,0,0}\makebox(0,0)[lb]{\footnotesize{$U$}}}%
    \put(0.60563837,0.23158955){\color[rgb]{0,0,0}\makebox(0,0)[lb]{\footnotesize{$f_{\lambda_0}$}}}%
    \put(0.3360011,-0.01){\color[rgb]{0,0,0}\makebox(0,0)[lb]{\footnotesize{$G(\lambda)=\psi_\lambda^{-1}(f^n_\lambda(v_\lambda))$}}}%
     \put(0.25,0.08){\color[rgb]{0,0,0}\makebox(0,0)[lb]{\footnotesize{$F(\lambda)=f_\lambda^{n+1}(v_\lambda)\sim f_\lambda\circ G(\lambda)$}}}%
  \end{picture}%
\endgroup%
\end{center}
\caption{
\label{fig:pseudomontel} 
\small  An illustration of the proof of Proposition~\ref{pseudomontel}. The final claim follows by Lemma~\ref{shooting}, using  the fact that $\gamma(V)\subset F(V)$ and $\gamma(\partial V)\cap F(\partial V)=\emptyset$. 
}
\end{figure}

%
%
\end{proof}


\section{Cycles exiting the domain. Proof of Theorem A.} \label{sec:exiting}

The goal in this section is to explore the chain of of implications between cycles disappearing to infinity, the existence of virtual cycles   (Lemma \ref{lem:limcycle}), and  the activity  of at least one asymptotic value or critical point (Subsection \ref{subs:A}), hence proving Theorem A.  

In the process, we shall see that the only possible parameters for which some periodic orbits cannot be followed holomorphically are either parabolic parameters, or those for which  a cycle disappears to infinity; or accumulations thereof (Proposition \ref{prop:onlyobstructions}).

\subsection{Cycles exiting the domain require asymptotic values}
\ 

In this section we show that cycles exiting the domain must create a virtual cycle for the limiting parameter (see Lemma~\ref{lem:limcycle}).

It will be useful first to interpret the concept that  $x_i(\lambda)\ra\infty$ for $\lambda\ra\lambda_0\in M$ in the following more abstract way (c.f.\cite{mss,el} and see Figure~\ref{fig:projection}).

\begin{obs}[The projection map and cycles exiting the domain]\label{defn:pi1}
	For $n \in \N^*$, let 
	\[
	P_n:=\{(\lam,z) \in M \times \C: f_\lam^n(z)=z \},
	\]
	and 
	\[
	\pi_1 : P_n \to M
	\]
	be the projection onto the first coordinate. Then, a cycle of period $n$ exits the domain at $\lam_0$ if and only if $\lam_0$ is an asymptotic value of $\pi_1 : P_n \to M$.
\end{obs}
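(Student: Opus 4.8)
The statement is a reformulation of Definition~\ref{def:exit} in the language of \cite{mss,el}, so the plan is to unwind both sides. Recall that a point $\lambda_0\in M$ is an \emph{asymptotic value} of a continuous map $g\colon Y\to M$ if there is a continuous curve $\gamma\colon[0,\infty)\to Y$ which eventually leaves every compact subset of $Y$ and satisfies $g(\gamma(t))\to\lambda_0$; here $Y=P_n$ and $g=\pi_1$. The implication ``$\Rightarrow$'' is immediate: if a cycle of period $n$ disappears to infinity at $\lambda_0$, the curves $t\mapsto\lambda(t)$ and $t\mapsto z(t)$ of Definition~\ref{def:exit} assemble into $\gamma(t):=(\lambda(t),z(t))\in P_n$ with $\pi_1(\gamma(t))=\lambda(t)\to\lambda_0$, and $\gamma$ leaves every compact $K\subset P_n$ because the second coordinate projection $P_n\to\C$ is continuous (so $K$ has bounded image in $\C$) while $|z(t)|\to+\infty$.

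For the converse, let $\gamma(t)=(\lambda(t),z(t))\in P_n$ witness that $\lambda_0$ is an asymptotic value of $\pi_1$, so $\lambda(t)\to\lambda_0\in M$ and $\gamma$ leaves every compact subset of $P_n$. If $z(t)\to\infty$ we conclude directly via Definition~\ref{def:exit}. Otherwise, after extraction, $z(t_k)\to z_*\in\C$ and $\lambda(t_k)\to\lambda_0$ for some $t_k\to+\infty$. The point $(\lambda_0,z_*)$ cannot lie in $P_n$, for otherwise $\{\gamma(t_k):k\in\N\}\cup\{(\lambda_0,z_*)\}$ would be a compact subset of $P_n$ to which $\gamma$ returns infinitely often, contradicting the choice of $\gamma$. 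Since $f_{\lambda(t_k)}^n(z(t_k))=z(t_k)\to z_*\in\C$ while $(\lambda,z)\mapsto f_\lambda^n(z)$ is continuous into $\chat$ at $(\lambda_0,z_*)$ unless $z_*$ is a prepole of $f_{\lambda_0}$ of order $\le n-1$ (in which case one would get $f_{\lambda_0}^n(z_*)=z_*$, i.e. $(\lambda_0,z_*)\in P_n$), we deduce that $z_*$ is such a prepole, of minimal order $m$ with $1\le m\le n-1$. As the orbit of a genuine period-$n$ point never passes through $\infty$, the curve $\tilde z(t):=f_{\lambda(t)}^m(z(t))$ is a well-defined continuous curve in $\C$ with $f_{\lambda(t)}^n(\tilde z(t))=\tilde z(t)$ and $\tilde z(t_k)\to f_{\lambda_0}^m(z_*)=\infty$; after passing to a sub-curve along which $\tilde z\to\infty$, the pair $(\lambda,\tilde z)$ satisfies Definition~\ref{def:exit} at $\lambda_0$.

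The forward direction costs nothing; the only genuine point is in the converse, namely to rule out that $\gamma$ escapes the compacts of $P_n$ while $|z(t)|$ stays bounded. This is not a pure compactness matter, because $P_n$ need not be closed in $M\times\C$: a period-$n$ point may converge to a finite prepole of $f_{\lambda_0}$, at which a later iterate of the cycle blows up. I expect this to be the (modest) crux of the argument, and the way around it is the one indicated above --- chasing the cycle forward to the iterate that meets a pole reconstructs the curve demanded by Definition~\ref{def:exit}.
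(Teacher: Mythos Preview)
The paper does not prove this observation; the sentence immediately following it (``In other words, a cycle of period $n$ exits the domain at $\lambda_0$ if and only if there exists a continuous curve $t \mapsto (\lambda(t), z(t))$ in $P_n$ such that $\lim_{t \to +\infty} \lambda(t)=\lambda_0$ and $\lim_{t \to +\infty} z(t)=\infty$'') makes clear that the authors take ``$\lambda_0$ is an asymptotic value of $\pi_1$'' to \emph{mean} exactly this, which is Definition~\ref{def:exit} verbatim. So for the paper it is a tautological restatement.

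Your forward direction is fine and matches this. For the converse you adopt instead the more abstract notion of asymptotic value (a curve leaving every compact of $P_n$), which is a legitimate reading but introduces a subtlety the paper sidesteps. Under that reading your argument has a genuine gap at the last step: from $\tilde z(t_k)\to\infty$ along a subsequence you cannot simply ``pass to a sub-curve along which $\tilde z\to\infty$'' --- a continuous curve may well oscillate. The clean fix is to note that the accumulation set of $t\mapsto z(t)$ in $\chat$ is connected (it is the nested intersection $\bigcap_{T>0} \overline{z([T,\infty))}$ of compact connected sets) and, by your own argument, is contained in $\{\infty\}$ together with the prepoles of $f_{\lambda_0}$ of order at most $n-1$. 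Since finite prepoles of bounded order are isolated in $\C$, that ambient set is totally disconnected, so the accumulation set must be a single point. Hence either $z(t)\to\infty$ and you are done, or $z(t)\to z_*$ genuinely (not merely along $t_k$), and then $\tilde z(t)=f_{\lambda(t)}^m(z(t))\to f_{\lambda_0}^m(z_*)=\infty$ directly, with no sub-curve extraction needed.

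One minor point of exposition: in the sentence where you deduce that $z_*$ is a prepole, the parenthetical ``(in which case one would get $f_{\lambda_0}^n(z_*)=z_*$\ldots)'' grammatically attaches to the prepole case, whereas you intend it to describe the continuity case; the logic is correct once unscrambled.
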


In other words, a cycle of period $n$ exits the domain at $\lam_0$ if and only if there exists a continuous curve $t \mapsto (\lam(t), z(t))$ in $P_n$ such that $\lim_{t \to +\infty} \lam(t)=\lam_0$ and $\lim_{t \to +\infty} z(t)=\infty$.

%
%

 The set $P_n$ is an analytic hypersurface of $M \times \C$, and by the Implicit Function Theorem it is smooth except possibly at points $(\lam,z)$ 
where $z$ is a periodic point of period dividing $n$ with $(f_\lam^n)'(z)=1$.
Moreover, if $\lam \in M$ is a critical value of $\pi_1 : P_n \to M$, then $f_\lam$ has a parabolic cycle of period dividing $n$ and multiplier $1$.

\begin{figure}[hbt!]
\begin{center}
\def\svgwidth{0.4\textwidth}
\begingroup%
  \makeatletter%
    \setlength{\unitlength}{\svgwidth}%
  \makeatother
 \begin{picture}(1,0.89344024)%
    \put(0,0){\includegraphics[width=\unitlength]{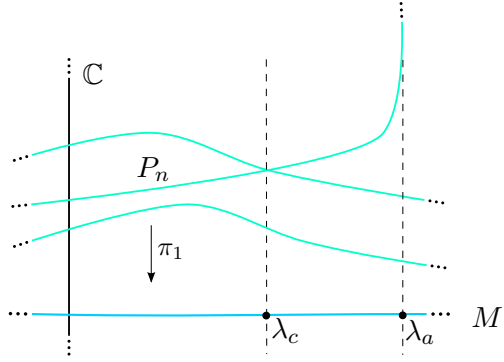}}%
    \put(0.2718644,0.5199087){\color[rgb]{0,0,0}\makebox(0,0)[lt]{\smash{$P_n$}}}%
    \put(0.15773773,0.71703658){\color[rgb]{0,0,0}\makebox(0,0)[lt]{\smash{$\C$}}}%
    \put(0.98,0.2){\color[rgb]{0,0,0}\makebox(0,0)[lt]{\smash{$M$}}}%
    \put(0.31621186,0.35334136){\color[rgb]{0,0,0}\makebox(0,0)[lt]{\smash{\small $\pi_1$}}}%
    \put(0.55691541,0.17615319){\color[rgb]{0,0,0}\makebox(0,0)[lt]{\smash{$\lambda_c$}}}%
    \put(0.83937683,0.17298189){\color[rgb]{0,0,0}\makebox(0,0)[lt]{\smash{$\lambda_a$}}}%
  \end{picture}%
\endgroup%
\end{center}
\caption{\label{fig:projection} \small An illustration of the set $P_n$, the map $\pi_1,$ and its asymptotic and critical values. Here $\lambda_c$ is a critical value for $\pi_1$, corresponding to the map $f_{\lambda_c}$ having a parabolic cycle, and $\lambda_a$ is an asymptotic value for $\pi_1$, corresponding  to  a cycle  exiting the domain at $\lambda_a$.}  

\end{figure}

\begin{defi}[Singular value set of $\pi_1$]
We let $X_n$ denote the singular value set of $\pi_1 : P_n \to M$,  which is the closure of the set of  critical and asymptotic values of $\pi_1$.
Let $X:=\overline{\bigcup_{n=1}^\infty X_n}$.
\end{defi}

The next proposition shows that the singular values of $\pi_1: P_n\ra M$ are the only possible obstructions for a holomorphic  motion of ${\rm Fix}(f_\la^n)$ (the fixed points of $f_\la^n$) to exist, which confirms that $X$ is the appropriate set to study.

\begin{prop}[$\JJ$ moves holomorphically outside $X$]\label{lem:singular values iff no holo motion} \label{prop:onlyobstructions}
	Let $\{f_\la\}_{\la\in M}$ be a natural family of meromorphic maps and $U \subset M$ be a simply connected domain. Let $X, X_n$ be as above. Then 
	\begin{enumerate}[\rm(1)]
		\item $U \cap X_n=\emptyset \iff $ the set ${\rm Fix}(f_\la^n)$ moves holomorphically over $U$ for every $n\geq 1$.
		\item $U \cap X=\emptyset \implies$ the Julia set of $f_\lam$ moves holomorphically over $U$.  
	\end{enumerate}
\end{prop}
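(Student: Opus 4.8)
The plan is to prove both statements by constructing holomorphic motions of the periodic points of each period $n$ and then invoking the $\lambda$-Lemma, together with the characterization of $\mathcal{J}$ as the closure of the backward orbit of $\infty$ (or of repelling periodic points). For part (1), fix $n \geq 1$ and suppose $U \cap X_n = \emptyset$. Then over $U$ the projection $\pi_1 : P_n \to M$ has no critical or asymptotic values; since $U$ is simply connected, this means $\pi_1^{-1}(U) \to U$ is a (possibly trivial, possibly disconnected) covering with no branching and no escape of sheets, so every local section extends to a global holomorphic section over $U$. Concretely, for each fixed point $z_0$ of $f_{\lambda_0}^n$ the Implicit Function Theorem gives a local holomorphic germ $\lambda \mapsto z(\lambda)$ with $f_\lambda^n(z(\lambda)) = z(\lambda)$, and the absence of singular values of $\pi_1$ over the simply connected $U$ allows this germ to be continued holomorphically and single-valuedly throughout $U$ (one must check the continuation never runs into a point where $(f_\lambda^n)'=1$, which is exactly a critical point of $\pi_1$, nor escapes to $\infty$, which is exactly an asymptotic value of $\pi_1$ — both excluded). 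Distinct fixed points stay distinct because a collision would force a value of $\lambda$ with a multiple fixed point, i.e. a critical value of $\pi_1$. This produces an injective holomorphic motion $H$ of $\mathrm{Fix}(f_{\lambda_0}^n)$ over $U$ respecting the dynamics. Conversely, if $\mathrm{Fix}(f_\lambda^n)$ moves holomorphically over $U$ then $\pi_1^{-1}(U) \to U$ is a covering, so it has no critical values (no branch points) and no asymptotic values (no sheets escaping to $\infty$), hence $U \cap X_n = \emptyset$.

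For part (2), assume $U \cap X = \emptyset$, so by definition $U \cap X_n = \emptyset$ for every $n$ (using that $U$ is open and $X = \overline{\bigcup_n X_n}$). By part (1), for each $n$ we get a holomorphic motion $H^{(n)}$ of $\mathrm{Fix}(f_{\lambda_0}^n)$ over $U$ respecting the dynamics. These motions are compatible: a point fixed by $f_{\lambda_0}^n$ is also fixed by $f_{\lambda_0}^{nk}$, and by uniqueness of analytic continuation its orbit under $H^{(n)}$ agrees with its orbit under $H^{(nk)}$; hence they glue to a single holomorphic motion $H$ of $\bigcup_{n \geq 1} \mathrm{Fix}(f_{\lambda_0}^n)$, the set of all periodic points of $f_{\lambda_0}$, over $U$, still respecting the dynamics. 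The set of \emph{repelling} periodic points is dense in $\mathcal{J}(f_{\lambda_0})$; applying the $\lambda$-Lemma (Theorem \ref{lem:la-lemma}) extends $H$ to a holomorphic motion of the closure of the periodic points, which contains $\mathcal{J}(f_{\lambda_0})$, and the extension still respects the dynamics. Finally one checks $H_\lambda(\mathcal{J}(f_{\lambda_0})) = \mathcal{J}(f_\lambda)$: the image is a compact $f_\lambda$-invariant set on which the dynamics is topologically conjugate to that on $\mathcal{J}(f_{\lambda_0})$, it contains the repelling periodic points of $f_\lambda$ near those being moved, and a standard argument (closedness of the motion plus density of repelling cycles on both sides, exactly as in \cite{mss}) upgrades this to equality. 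Thus $\mathcal{J}$ moves holomorphically over $U$.

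The main obstacle I expect is the continuation argument in part (1): one must argue carefully that absence of asymptotic values of $\pi_1$ over $U$ genuinely prevents a periodic point from "escaping to infinity" along a path in $U$, and that absence of critical values prevents both branching of the covering and the collision of two distinct periodic points — i.e. that $X_n$ really captures all obstructions to global single-valued continuation over a simply connected base. This is essentially the meromorphic analogue of the reasoning in \cite{mss} and \cite{el}, but with $\infty$ now a genuine point of the dynamical plane that periodic points can reach; the Observation \ref{defn:pi1} identifying "cycle exiting the domain" with "asymptotic value of $\pi_1$" is exactly the tool that makes this rigorous. A secondary technical point is ensuring the glued motion over all periods is well-defined and that the $\lambda$-Lemma extension produces exactly $\mathcal{J}(f_\lambda)$ rather than a proper subset or superset, which requires knowing that $\mathcal{J}(f_\lambda)$ is the closure of its repelling cycles also for the perturbed maps — a standard fact for finite type meromorphic maps that we may invoke from the background in Section \ref{subsec:asympv}.
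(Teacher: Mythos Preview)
Your proposal is correct and follows essentially the same approach as the paper: for (1) you use that $\pi_1:P_n\to M$ is an unbranched covering over $U$ (since $U\cap X_n=\emptyset$ and $U$ is simply connected) to globalize the local sections given by the Implicit Function Theorem, and for (2) you combine the motions over all $n$ with the $\lambda$-Lemma and density of repelling periodic points. The paper's own proof is terser---it dispatches (2) in a single sentence---whereas you spell out the gluing of the motions $H^{(n)}$ and the verification that $H_\lambda(\JJ(f_{\lambda_0}))=\JJ(f_\lambda)$; this extra care is appropriate but not a different method.
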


\begin{proof} 

To see (1), suppose $U\cap X_n=\emptyset$. Let $\lam_0 \in U$, and let $({ \la_0},z_i)_{i \in \N}$ denote the preimages $\pi_1^{-1}(\lam_0)$.
Then for all $i \in \N$, there exists a holomorphic branch $g_i : U \to P_n$ of $\pi_1^{-1}$ 
with $g_i(\lam_0)=(\lam_0,z_i)$.
In this setting $\lam \mapsto \pi_2 \circ g_i(\lam)$ gives the desired holomorphic motion, where $\pi_2$ is the projection onto the second coordinate.

 For the reverse implication, suppose $\la_0\in U$ is a singular value. If $\la_0$ is an asymptotic value, there is a fixed point of $f_\la^n$ which escapes to infinity when $\la$ approaches $\la_0$. Hence any holomorphic motion of the set ${\rm Fix}_n(f_{\la_0})$ over $U$ could not be surjective. Otherwise, if $\la_0$ is the image of a critical point $(\la_0, z_i)$, every $\la$ in a neighborhood of $\la_0$ will have $k>1$ distinct preimages in $P_n$ splitting off from $z_i$, hence these periodic points cannot be followed holomorphically either.

Statement (2) follows from (1), the $\lam$-lemma and the fact that (repelling) periodic points are dense in the Julia set.
\end{proof}

In fact, it will follow from our results in Section \ref{sec:MSS} that the converse to item (2) also holds,  but for this we shall need the full power of Theorems A and B.

The next lemma shows that any cycle disappearing to infinity requires the help of an asymptotic value which, in the limit, is eventually mapped to infinity, creating a virtual cycle. 

\begin{lem}[Cycle exiting the domain implies virtual cycle for $f_{\la_0}$]\label{lem:limcycle}
	Let $\{f_\la\}_{\la\in M}$ be a natural family of meromorphic maps. Let $t \mapsto (\lam(t), z(t))$ be a curve in $P_n$ with $\limt \lam(t)=\lam_0 \in M$ and $\limt z(t)=\infty$.
	Then there exists a cyclically ordered set $\infty =a_0, \ldots, a_{n-1}\in \hat{\C}$  such that:
	\begin{enumerate}[\rm (1)]
		\item for all $0\leq m \leq n-1$, $a_m=\limt f_{\lam(t)}^m(z(t))$;
		\item if $a_m \in \C$, then $a_{m+1}=f_{\lam_0}(a_m)$;
		\item  if $a_m=\infty$, then $a_{m+1}$ is an asymptotic value  of $f_{\lam_0}$  (possibly equal to $\infty$) and $a_{m-1}$ is either $\infty$ or a pole of $f_{\lam_0}.$
	\end{enumerate}
\end{lem}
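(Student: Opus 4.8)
The plan is to study the sequence of curves $t \mapsto f_{\lam(t)}^m(z(t))$ for $m = 0, \ldots, n-1$ and extract convergent subsequences in $\hat{\C}$, which is compact. First I would note that by hypothesis $f_{\lam(t)}^0(z(t)) = z(t) \to \infty$, so we may set $a_0 := \infty$. Passing to a subsequence $t_k \to +\infty$ (using a diagonal argument so that the same subsequence works for all finitely many indices $m$), I obtain limits $a_m := \lim_k f_{\lam(t_k)}^m(z(t_k)) \in \hat{\C}$ for each $0 \le m \le n-1$. Since the cycle has period $n$, we have $f_{\lam(t)}^n(z(t)) = z(t)$, so the sequence is naturally cyclically ordered with $a_n = a_0 = \infty$. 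The content of the lemma is then to identify the transition rule $a_m \rightsquigarrow a_{m+1}$, distinguishing the cases $a_m \in \C$ and $a_m = \infty$.

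For case (2), suppose $a_m \in \C$. Write $f_\lam = \phi_\lam \circ f \circ \psi_\lam^{-1}$ with $\psi_\lam(\infty) = \infty$; since $f_{\lam(t)} \to f_{\lam_0}$ locally uniformly away from the poles of $f_{\lam_0}$, I need to check that $a_m$ is not a pole of $f_{\lam_0}$. If $a_m$ were a pole, then $f_{\lam(t_k)}(f_{\lam(t_k)}^m(z(t_k)))$ would tend to $\infty$ — but this needs care because it is exactly the assertion $a_{m+1} = \infty$, which is allowed. So the cleaner route: if $a_m \in \C$ is not a pole of $f_{\lam_0}$, continuity of the family gives $a_{m+1} = \lim_k f_{\lam(t_k)}(f_{\lam(t_k)}^m(z(t_k))) = f_{\lam_0}(a_m)$, and this lies in $\hat\C$; if it happens that $f_{\lam_0}(a_m) = \infty$ (i.e. $a_m$ is a pole), we still record $a_{m+1} = f_{\lam_0}(a_m) = \infty$, which is consistent with (2) read in $\hat\C$. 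The genuinely new phenomenon is case (3): when $a_m = \infty$. Here $f_{\lam(t_k)}^m(z(t_k)) \to \infty$, and I must show $a_{m+1} = \lim_k f_{\lam(t_k)}^{m+1}(z(t_k))$ is an asymptotic value of $f_{\lam_0}$. The key structural input is that $\psi_\lam$ fixes $\infty$, so $w_k := \psi_{\lam(t_k)}^{-1}(f_{\lam(t_k)}^m(z(t_k))) \to \infty$ as well, and $f_{\lam(t_k)}^{m+1}(z(t_k)) = \phi_{\lam(t_k)}(f(w_k))$. Since $\phi_{\lam(t_k)} \to \id$ uniformly on $\hat\C$ (as $\lam(t_k) \to \lam_0$ — here I should normalise $\lam_0$ or work with $\phi_{\lam(t)} \circ \phi_{\lam_0}^{-1}$), we get $a_{m+1} = \lim_k \phi_{\lam_0}(f(w_k)) = \phi_{\lam_0}(\lim_k f(w_k))$, so it suffices to show $v := \lim_k f(w_k)$ is an asymptotic value of $f$, hence $\phi_{\lam_0}(v)$ is an asymptotic value of $f_{\lam_0}$.

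To see that $v = \lim_k f(w_k)$ with $w_k \to \infty$ is an asymptotic value of $f$: if $v$ is not an asymptotic value, I claim $f$ is a local homeomorphism near $w_k$ onto a fixed neighborhood of $v$ for $k$ large, which forces $a_{m-1}$ (the preimage side) to behave well, but more directly — actually the right argument is by contradiction via the limiting behaviour. The cleanest formulation: since $f$ is of finite type, hence has no asymptotic value would mean $v$ has a neighborhood $D$ over which $f^{-1}$ has only finitely many branches with bounded... no. Let me instead argue: connect $w_k$ to $\infty$ — there is no such path in the domain. Rather, I think the paper's intended argument is: since $w_k \to \infty$ and $f(w_k) \to v$, and if $v$ is a regular value with a simply-connected neighborhood $D \ni v$ disjoint from $S(f)$, then each component of $f^{-1}(D)$ is mapped properly (indeed homeomorphically, or by finite degree) onto $D$; infinitely many such components must accumulate only at $\infty$ (by the finite type / normal families argument as in Lemma~\ref{shrink}), and on such a component the map is univalent with the component shrinking to a point at $\infty$ — but a curve in this component going to $\infty$ would map to a curve in $D$ converging to a point, exhibiting $v$ as an asymptotic value, contradiction. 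Thus $v \in S(f)$, and combined with $w_k \to \infty$ (so $v$ cannot be a critical value of a critical point at finite distance being approached — those would force $w_k$ to accumulate at a finite critical point), $v$ must be an asymptotic value. The symmetric statement about $a_{m-1}$: if $a_m = \infty$ then $a_{m-1} = \lim_k f_{\lam(t_k)}^{m-1}(z(t_k))$ satisfies $f_{\lam_0}(a_{m-1}) = a_m = \infty$ in $\hat\C$ (by case (2) applied at index $m-1$ if $a_{m-1} \in \C$), so $a_{m-1}$ is a pole of $f_{\lam_0}$, or else $a_{m-1} = \infty$ directly. Finally, the existence of at least one index with $a_m = \infty$ is immediate since $a_0 = \infty$. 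I expect the main obstacle to be the rigorous justification that $v = \lim f(w_k)$ is an asymptotic value (as opposed to merely a singular value, or the limit being spurious) — this is where the finite-type hypothesis and a careful normal-families / covering argument in the spirit of Lemmas~\ref{shrink} and~\ref{afit} must be deployed, and where one must rule out $v$ being approached through critical points drifting to infinity.
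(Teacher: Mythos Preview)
Your approach has a genuine gap in two related places. First, statement (1) asserts that the \emph{full} limit $a_m=\limt f_{\lam(t)}^m(z(t))$ exists, not merely a subsequential limit; your diagonal extraction only produces the latter. Second, and as a direct consequence, your argument for (3) becomes tangled: having only a \emph{sequence} $w_k \to \infty$ with $f(w_k)\to v$, you cannot invoke the definition of asymptotic value (which requires an asymptotic \emph{path}), and so you reach for the finite type hypothesis and a covering/normal-families argument that is both unnecessary and not among the lemma's hypotheses (note the lemma is stated for natural families of meromorphic maps, with no finite type assumption).

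The paper closes both gaps with one observation: writing $x_m(t):=f_{\lam(t)}^m(z(t))$, any \emph{finite} accumulation point $a$ of the curve $t\mapsto x_m(t)$ must satisfy $f_{\lam_0}^{n-m}(a)=\infty$ (since $f_{\lam(t)}^{n-m}(x_m(t))=z(t)\to\infty$ and the family converges locally uniformly), so $a$ is a pre-pole of $f_{\lam_0}$ of order at most $n-m$. The set of such pre-poles is discrete; but the accumulation set of a continuous curve is connected, so it reduces to a single point of $\hat\C$, and the full limit $a_m$ exists. With this in hand, (3) is immediate: $t\mapsto \psi_{\lam(t)}^{-1}(x_m(t))$ is a \emph{continuous curve} tending to $\infty$ (since $\psi_\lam(\infty)=\infty$), and $f$ along this curve converges to $a_{m+1}$ (since $\phi_{\lam(t)}^{-1}\to\id$), which is precisely the definition of $a_{m+1}$ being an asymptotic value of $f=f_{\lam_0}$. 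No appeal to finite type, tracts, or Ahlfors islands is needed.
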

In other words, the set $a_0, \ldots, a_{n-1}$ is a \emph{virtual cycle} for $f_{\la_0}$. 
Notice that the lemma  implies that, as $t\to\infty$ (and hence $\la(t)\to\la_0$), either the whole cycle corresponding to $z(t)$ tends to infinity (in which case $\infty$ must be an asymptotic value for $f_{\la_0}$), or there exists at least one finite asymptotic value and one pole in the virtual cycle (possibly more, if there is more than one $a_i$ which equals infinity).  Notice that for this lemma, the finite type assumption is not needed.

\begin{proof}
	To simplify notation, let us denote  $x_m(t):=f_{\lam(t)}^m(z(t))$,  and $f=f_{\lambda_0}$.
	 By assumption \linebreak 
	$\limt f_{\lam(t)}^{n-m}(x_m(t))=\limt z(t)= \infty$, so any finite accumulation point 
	of the curve $t \mapsto x_m(t)$ must be a pre-pole of  $f$ 
	 of order at most $n-m$. In particular, the set of finite accumulation points of this curve is discrete, and so $\limt x_m(t)$ exists (and is possibly $\infty$). Let $a_m:=\lim_{t\to \infty} x_m(t) \in\chat$. 	Item (2) follows easily.
	
	Next, assume that $a_m=\infty$ for some $0 \leq m \leq n-1$. Since $\{f_\lam\}_{\lam \in M}$ is a natural family, we have
\[
x_{m+1}(t) = f_{\lam(t)}(x_{m}(t)) = \phi_{\lam(t)} \circ f \circ \psi_{\lam(t)}^{-1}(x_m(t)),
\]
	where $f:=f_{\lam_0}$, $\phi_{\lam}, \psi_\lam : \hat{\C} \to \hat{\C}$ are quasiconformal homeomorphisms depending holomorphically on $\lam$, 
	and $\phi_{\lam_0}=\psi_{\lam_0}=\id$. Therefore, we have  
	$$f\circ \psi_{\lam(t)}^{-1}(x_m(t))=\phi_{\lam(t)}^{-1}(x_{m+1}(t)),$$
	and $\limt\psi_{\lam(t)}^{-1}(x_m(t))=a_m=\infty$, 
	whereas $\limt \phi_{\lam(t)}^{-1}(x_{m+1}(t))=a_{m+1}$ since $\phi_{\lam(t)}^{-1}$ tends to the identity. 	
	 Therefore $a_{m+1}$ is indeed an asymptotic value of $f$.
	
	Finally, still under the assumption that $a_m=\infty$, it follows from item (2) that if 
 	$a_{m-1}$ is finite then it is a pole.
\end{proof}




%

 Observe that if the periodic point exiting the domain at $\la_0$ is actually a fixed point, then the virtual cycle is just $\{\infty\}$ and hence $\infty$ is a non-persistent asymptotic value of $f_{\la_0}$ (since it cannot be a critical point). This happens e.g. in families where $f_{\lam_0}$ is entire but $f_\lam$ is not, for values of $\lam$ near $\lam_0$, for instance $f_\la(z)=\frac{e^z}{1+\la e^z}$ and $\la_0=0$.

 The rest of this section is dedicated to showing that either one of the asymptotic values in the virtual cycle must be active, or a critical point is, concluding the proof of Theorem A.

\subsection{Preliminary results}

We first record here several lemmas essentially due to Eremenko and Lyubich, some of them modified for our purposes.

Let $\H$ be the left half plane and $T$ a tract over the asymptotic value $v$ (see Definition \ref{defi:tracts}). 
By uniqueness of the holomorphic universal covering up to biholomorphims, there exists a Riemann map $g:\H\ra T$ such that $f\circ g(z)=re^z+v $ for every $z\in\H$.

The following statement is proven in \cite[Lemma 3]{el} in the case where $f$ is a finite type entire map, but the same proof applies in greater generality, as stated in the following lemma.  For this lemma the asymptotic value under consideration is $v=\infty$.

\begin{lem}[\cite{el}, Lemma 3]\label{el-lemma3}
	Let $R>0$, and let $T \subset \C$ be a simply connected domain whose boundary is a real-analytic simple curve with both endpoints converging to $\infty$. Let $f: T \to \C \backslash \overline{\D(0,R)}$ be a holomorphic universal cover, and
	let $\arg$ denote a branch of the argument on $T$. 
	Let $t \mapsto \gamma(t)$ be a continuous curve such that 
	$\lim \gamma(t)=\infty$ and $\gamma(t) \in T$. Then there exists $t_k \to +\infty$ and a constant $C$ independent of $k$, such that 
	\begin{equation}\label{eq:EL lemma 3}
		\ln^2|f(\gamma(t_k))| + \arg^2f(\gamma(t_k)) \geq C |\gamma(t_k)| \exp \frac{\arg^2 \gamma(t_k)}{\ln |\gamma(t_k)|}.
	\end{equation}
\end{lem}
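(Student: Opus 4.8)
The plan is to follow the proof of \cite[Lemma 3]{el} and to verify that it uses only the hypotheses stated here --- $T$ simply connected with a real-analytic simple boundary curve whose two ends tend to $\infty$, and $f\colon T\to\C\setminus\overline{\D(0,R)}$ an infinite-degree unbranched covering --- and not the finiteness of $S(f)$.

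First I would pass to the uniformizing coordinate of the tract, exactly as in the paragraph preceding the lemma. Since $T$ is simply connected and $f$ is a universal covering of the punctured disk $\{|w|>R\}$ around $\infty$, whose own universal covering is $z\mapsto Re^{-z}$ from the left half-plane $\H$, the map $f$ lifts to a conformal isomorphism $g\colon\H\to T$ with $f\circ g(z)=Re^{-z}$. Put $w(t):=g^{-1}(\gamma(t))\in\H$ and take $\arg f:=-\im w$ as the branch of the argument along $\gamma$; then $\ln|f(\gamma(t))|=\ln R-\re w(t)$ and $\arg f(\gamma(t))=-\im w(t)$, so that
\[
\ln^2|f(\gamma(t))|+\arg^2 f(\gamma(t)) \;=\; \bigl(\ln R-\re w(t)\bigr)^2+\bigl(\im w(t)\bigr)^2 \;\geq\; \tfrac12\,|w(t)|^2-\ln^2 R .
\]
Hence, up to a harmless additive constant, it suffices to find $t_k\to+\infty$ with
\[
|w(t_k)|^2 \;\gtrsim\; |\gamma(t_k)|\,\exp\!\frac{\arg^2\gamma(t_k)}{\ln|\gamma(t_k)|},
\]
a statement purely about the conformal map $g$ and the planar geometry of $T$.

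Next I would work in logarithmic coordinates. As $0\notin T$ and $T$ is simply connected, a branch of $\log$ gives a conformal map $L\colon T\to\widetilde T:=\log T$, so that $L\circ g\colon\H\to\widetilde T$ is a conformal isomorphism; write $\widetilde\gamma(t):=\log\gamma(t)=X(t)+iY(t)$ with $X(t)=\ln|\gamma(t)|\to+\infty$ and $Y(t)=\arg\gamma(t)$, and note $w(t)=(L\circ g)^{-1}(\widetilde\gamma(t))$. The decisive geometric fact is that $\widetilde T$ is a domain opening to the right whose horizontal cross-sections $\widetilde T\cap\{\re\zeta=x\}$ all have total length at most $2\pi$ (because $T$ does not wind around the origin; if $0\in\overline T$ one first discards the part of $\widetilde T$ of small real part). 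For such strip-like domains the growth of a conformal map onto a half-plane is governed by the length--area method: Ahlfors' distortion theorem, in the form of the Gr\"otzsch--Warschawski estimate, bounds $|w(t)|$ from below in terms of an integral $\int^{X(t)}\!\frac{dx}{\sigma(x)}$ over the cross-sectional widths $\sigma(x)\leq 2\pi$, plus a quadratic contribution reflecting the height of $\widetilde\gamma(t)$ inside the strip relative to its width --- and this last term is precisely what produces the factor $\exp(\arg^2\gamma/\ln|\gamma|)$. Since an estimate of this kind is of an averaged nature, it yields the displayed inequality, with a $k$-independent constant, only along a suitable sequence of abscissae $X(t_k)\to+\infty$; this is the origin of the subsequence $t_k$. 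All of this is carried out in \cite[Lemma 3]{el}, and, being a statement about $g$ and the planar geometry of $T$ alone, it is insensitive to whether $f$ has finitely many singular values.

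The step I expect to be the main obstacle is this length--area/distortion estimate with its explicit $\exp(\arg^2\gamma/\ln|\gamma|)$ dependence: one must keep track of the possible spiralling of $T$ (the cross-sections of $\widetilde T$ may drift in height, though their widths stay $\leq 2\pi$), deal with curves $\gamma$ converging to a boundary prime end rather than running straight out, and extract the subsequence $t_k$ from the averaged inequality. Since all of this is already in \cite{el}, the only thing to record is that their proof invokes nothing beyond the hypotheses of the statement, so it applies here verbatim.
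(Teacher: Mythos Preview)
Your proposal is essentially the same approach as the paper's: the paper does not give its own proof of this lemma at all, but simply cites \cite[Lemma~3]{el} and remarks that ``the same proof applies in greater generality'' than the finite-type entire setting of \cite{el}. Your write-up is a more detailed sketch of that same argument from \cite{el}, together with the explicit observation that nothing in it uses the finiteness of $S(f)$; this is fully consistent with (and more informative than) what the paper records.
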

%
%
%

It will be convenient to introduce the following notation, for curves that are not too far apart from each other in $\log$-coordinates, at any given $t>0$.

\begin{defn}[Equivalence relation $\asymp$]
	Let $\gamma_1,\gamma_2: \R_+\to \C^*$ be two continuous curves, converging either both to $0$ or both to $\infty$. We will write
	$\gamma_1 \asymp \gamma_2$ if there exists a constant $C>1$ such that
	\begin{equation}
		\frac{1}{C}  \ln |\gamma_2(t)| \leq \ln |\gamma_1(t)| \leq C \ln |\gamma_2(t)|
	\end{equation}
	and 
	\begin{equation}
		|\arg \gamma_1(t)-\arg \gamma_2(t)| \leq C \left| \ln|\gamma_1(t)| \right|
	\end{equation}
\end{defn}

Note that this definition makes sense because the arguments $\arg \gamma_i$  are well-defined up to a multiple of $2i\pi$. Also note that $\asymp$ is an equivalence relation.

\begin{rem}\label{rem:power of curves}
	If $\gamma_1, \gamma_2$ are two curves as above and $d \in \Z^*$, 
	then it is easy to see that $\gamma_1 \asymp \gamma_2$ if and only if $\gamma_1^d \asymp \gamma_2^d$,  simply because in log coordinates the map $z\mapsto z^d$ becomes $\omega \mapsto d \omega$.
\end{rem}

The following lemma can be extracted from arguments present in \cite{el}; we include details for the convenience of the reader.

\begin{lem}[$f^{-1}$ preserves $\asymp$]\label{lem:f-1preserves}
	Let $\gamma_1, \gamma_2: \R_+ \to \C^*$ be two curves, and $f$ be a bounded type  meromorphic map. 
	Assume that $\gamma_i(t) \to \infty$ and $f\circ \gamma_i(t) \to \infty$, and that $f\circ\gamma_1 \asymp f\circ\gamma_2$.
	Then $\gamma_1 \asymp \gamma_2$.
\end{lem}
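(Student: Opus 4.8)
The strategy is to reduce the statement to a single tract via a compactness/pigeonhole argument, and then work in logarithmic coordinates where $f$ looks like an exponential. Since $f$ is of bounded type, it has only finitely many asymptotic values, hence finitely many tracts over a neighborhood of $\infty$. After passing to large $t$, the curves $\gamma_1$ and $\gamma_2$ eventually enter (and stay in) tracts; since $f\circ\gamma_1 \asymp f\circ\gamma_2$ means in particular $|f\circ\gamma_1(t)|$ and $|f\circ\gamma_2(t)|$ are comparable on a logarithmic scale and both tend to $\infty$, the images $f\circ\gamma_i(t)$ eventually lie in a common small punctured neighborhood of a \emph{common} asymptotic value $v$ (if they accumulated at different asymptotic values, the logarithmic comparability of $|f\circ\gamma_i - v_i|$ would fail). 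So I may assume $\gamma_1,\gamma_2$ lie in the same tract $T$ over $v$, and (conjugating by $z\mapsto 1/(z-v)$ or $z\mapsto 1/z$ if $v=\infty$, which preserves $\asymp$ by Remark~\ref{rem:power of curves} applied with $d=-1$ together with an elementary estimate for translations) I may take $v=\infty$.

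Now use the Riemann map $g:\H\to T$ with $f\circ g(w) = re^w + v$ (or $re^w$ when $v=\infty$), so that on $T$ the map $f$ is conjugate, via $g^{-1}$, to $w\mapsto re^w$. Write $\delta_i(t) := g^{-1}(\gamma_i(t)) \in \H$. Then $f\circ\gamma_i(t)$ corresponds to $re^{\delta_i(t)}$, and the hypothesis $f\circ\gamma_1 \asymp f\circ\gamma_2$ translates into a comparison between $\re\delta_1(t)$ and $\re\delta_2(t)$ (from the moduli, which become $|e^{\delta_i}| = e^{\re\delta_i}$, so $\ln|f\circ\gamma_i| \approx \re\delta_i$) and between $\im\delta_1(t)$ and $\im\delta_2(t)$ (from the arguments). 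Precisely, $f\circ\gamma_1\asymp f\circ\gamma_2$ becomes: $\re\delta_1 \asymp_{\mathrm{mult}} \re\delta_2$ and $|\im\delta_1 - \im\delta_2| \lesssim \re\delta_i$. That is, $\delta_1$ and $\delta_2$ are at bounded hyperbolic-type distance in $\H$ relative to the real part; in fact $\delta_1(t),\delta_2(t)$ lie within a bounded multiple (plus additive constant) of each other in the natural coordinates of $\H$ near $\infty$. The conclusion to reach is that then $\gamma_1 = g(\delta_1) \asymp g(\delta_2) = \gamma_2$, i.e.\ that the Riemann map $g$ transports this $\H$-comparability back to an $\asymp$-relation in the $z$-plane.

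That last transport is where Eremenko–Lyubich's estimates enter, and it is the \textbf{main obstacle}. The point is to control $g$ and $g^{-1}$ near $\infty$: one needs that $g$ does not distort logarithmic coordinates too wildly, which is exactly the content of Lemma~\ref{el-lemma3} (providing a lower bound on $\ln^2|f(\gamma)| + \arg^2 f(\gamma)$ in terms of $|\gamma|$ and $\arg^2\gamma/\ln|\gamma|$ along a sequence $t_k\to\infty$) together with standard univalent-function / Koebe-type distortion bounds for the covering $g:\H\to T$. Concretely, I would: (i) use the fact that $T$ is a simply connected domain with real-analytic boundary near $\infty$ to get two-sided bounds relating $|g(w)|$ and $\re w$, and $\arg g(w)$ and $\im w$, up to controlled (logarithmic) error — this is the technical heart and is essentially a restatement of the distortion estimates used in \cite{el}; (ii) combine these with the $\H$-comparability of $\delta_1,\delta_2$ derived above to deduce first $\ln|\gamma_1(t)| \asymp_{\mathrm{mult}} \ln|\gamma_2(t)|$ and then $|\arg\gamma_1(t) - \arg\gamma_2(t)| \lesssim |\ln|\gamma_1(t)||$; (iii) handle the fact that Lemma~\ref{el-lemma3} only gives the estimate along a subsequence $t_k$ by noting that $\asymp$ only needs to be checked up to constants and that the curves vary continuously, or by running the argument with the full strength of the distortion bounds which hold for all $t$.

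I would close by remarking that only the \emph{bounded type} hypothesis on $f$ is used (to get finitely many tracts and the exponential model $f\circ g = re^w + v$), and that the reduction steps — same tract, then $v=\infty$ — are where the ``$\asymp$ is an equivalence relation'' and Remark~\ref{rem:power of curves} are invoked to avoid losing constants. The estimate is symmetric in $\gamma_1,\gamma_2$, so no orientation issue arises.
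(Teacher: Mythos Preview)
Your framework is reasonable---pass to a tract, use logarithmic/half-plane coordinates---but the technical core, your step (i), has a genuine gap. You propose to obtain \emph{two-sided pointwise} bounds relating $|g(w)|$ to $\re w$ and $\arg g(w)$ to $\im w$ for the Riemann map $g:\H\to T$. Such bounds are not available in general: tracts can be geometrically wild (spiralling, with rapidly varying width), and Lemma~\ref{el-lemma3} gives only a one-sided lower bound along a subsequence, which is neither two-sided nor for all $t$. Your step (iii) does not repair this. Also, a small confusion: since $f\circ\gamma_i\to\infty$, the asymptotic value in play is always $\infty$; the issue is whether $\gamma_1,\gamma_2$ lie in the same \emph{tract} over $\infty$, not over different asymptotic values.

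The paper's proof avoids estimating $g$ pointwise altogether. The key move is to lift by $\exp$ on \emph{both} sides: set $\delta_j:=\log\gamma_j$ and let $F:U_0\to\H_R$ be the conformal lift of $f$ (so $F(\delta_j)=\log(f\circ\gamma_j)$), where $U_0$ is a component of $\exp^{-1}(T)$. Crucially, $U_0$ lies in a horizontal strip of height $2\pi$, so Koebe applied to $F^{-1}$ gives $|F'(z)|\ge \tfrac{1}{4\pi}(\re F(z)-R)$ (this is Lemma~1 of \cite{el}). Integrating $|(F^{-1})'|\le \tfrac{4\pi}{\re w - R}$ along the Euclidean segment joining $F(\delta_1(t))$ and $F(\delta_2(t))$, and using $f\circ\gamma_1\asymp f\circ\gamma_2$, yields $|\delta_1(t)-\delta_2(t)|=O(1)$, which is strictly stronger than $\gamma_1\asymp\gamma_2$. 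Equivalently: $\delta_1,\delta_2$ are at bounded hyperbolic distance in $\H_R$ (this you do observe), hence at bounded hyperbolic distance in $U_0$, hence at bounded Euclidean distance because $U_0$ sits in a bounded-height strip. The missing ingredient in your outline is precisely this last fact---the derivative estimate/strip confinement---rather than any pointwise control of $g$.
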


\begin{proof}
	Let $A$ be a punctured disk around $\infty$, and let $G$ denote the union of the tracts $T_i$ such that $f: T_i \to A$ 
	is a universal cover. The set  $G$ is non-empty because under the assumptions of the lemma, $\infty$ is an asymptotic value, and because $f$ has a bounded set of singular values.
	Let $U:=\exp^{-1}(G)$ and 	$$\H_R:=\exp^{-1}(A)=\{z \in \C: \re(z)>R\}$$
	for some $R>0$ depending on the radius of $A$. Then there is a holomorphic map $F: U \to \H_R$ making the following diagram commute:
	$$
	\xymatrix{
		U \ar[r]^F \ar[d]_\exp  & \H_R \ar[d]^\exp \\
		G \ar[r]_f & A
	}$$
	Let $\delta_1, \delta_2$ be two respective lifts of $\gamma_1, \gamma_2$ by $\exp$, chosen to be in the same connected  component $U_0$ of $U$: then $\delta_j = \ln|\gamma_j| + i \arg \gamma_j$, and 
	$F(\delta_i)=\ln |f\circ \gamma_j| + i \arg f \circ \gamma_j$, for $j=1,2$.
	
	Let us denote by $I_t$ the Euclidean segment connecting $F\circ\delta_1(t)$ to $F\circ\delta_2(t)$, by $\leucl$ the Euclidean length, by $m(t)=\min(\re (F\circ\delta_1(t)), \re (F\circ\delta_2(t)))$ and by $M(t)=\max(\re( F\circ\delta_1(t)), \re (F\circ\delta_2(t)))$.
	
	By [\cite{el}, Lemma 1], we have $|F'(z)| \geq \frac{1}{4\pi} (\re\, F(z)- R)$ and $F: U_0 \to \H_R$ is a conformal isomorphism, hence it has a well defined inverse branch  $F_U^{-1}:\H_R\ra U$. Therefore 
\begin{align*}
		|\delta_1(t) - \delta_2(t)| &\leq \leucl(F_U^{-1}(I_t))\leq \sup_{w\in I_t}|(F_U^{-1})'(w)|  \leucl(I_t)\leq\\
		&\leq  \frac{4\pi}{m(t) - R}
		|F\circ \delta_1(t)-F\circ\delta_2(t))| \\
		&\leq  \frac{2\pi}{m(t)- R} \cdot \left(2M(t) + |\arg f\circ \gamma_1(t) - \arg f \circ \gamma_2(t)| \right) \\
		&\leq  \frac{C M(t)}{m(t)- R} = O(1)
\end{align*} 
	where we used $f\circ \gamma_1 \asymp f \circ \gamma_2$  in the last inequality.
	Finally, note that $\delta_1- \delta_2 = \log \gamma_1 - \log \gamma_2 = O(1)$ implies 
	$\gamma_1 \asymp \gamma_2$ (it is in fact much stronger).
\end{proof}

\begin{lem}\label{qcasymp}
	Let $f$ be a meromorphic function of bounded type. 
	Consider a curve $\gamma: \R_+ \to \C^*$ with $\gamma(t)\ra\infty$ as $t\ra +\infty$ and assume  that $f\circ\gamma(t)\ra\infty$ as $t\ra+\infty$.   Let $\{h_t: t \geq 0\}$ be a continuous family of $K$-qc homeomorphisms satisfying the hypothesis of Lemma~\ref{lem:ELLemma4}. Then $h_t \circ \gamma \asymp \gamma$.
\end{lem}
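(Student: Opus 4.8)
The plan is to reduce the statement to an application of Lemma~\ref{lem:f-1preserves} by showing that the two curves $f\circ(h_t\circ\gamma)$ and $f\circ\gamma$ are $\asymp$-equivalent. First I would set $\widetilde\gamma(t):=h_t(\gamma(t))$; the goal is then $\widetilde\gamma\asymp\gamma$. The natural route is: (i) show directly that $\widetilde\gamma\asymp\gamma$ as curves tending to $\infty$ using the quasiconformal distortion estimates of Lemma~\ref{lem:ELLemma4}, then (ii) deduce that $f\circ\widetilde\gamma(t)\to\infty$ as well (so that Lemma~\ref{lem:f-1preserves} can be invoked if needed, although here it may not even be necessary). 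Actually the cleaner approach is to obtain $\widetilde\gamma\asymp\gamma$ straight from Lemma~\ref{lem:ELLemma4}, since that lemma is precisely an estimate comparing $|\psi(z)|$ to $|z|$ and $\arg\psi(z)$ to $\arg z$ for a $K$-qc homeomorphism fixing $0,\infty$ — which is exactly the content of the $\asymp$ relation.

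Concretely, the key steps are as follows. Fix a point $z_0$ and a bound $B$ as in the hypotheses of Lemma~\ref{lem:ELLemma4}, so that for each $t$ the map $h_t$ satisfies $B^{-1}\le|h_t(z_0)|\le B$ and $|\arg h_t(z_0)-\arg z_0|\le B$ (this is part of what "satisfying the hypothesis of Lemma~\ref{lem:ELLemma4}" means, uniformly in $t$). Since $\gamma(t)\to\infty$, for $t$ large enough we have $|\gamma(t)|>|z_0|$, so the conclusions \eqref{eqtn:ELLem4Eq1} and \eqref{eqtn:ELLem4Eq2} of Lemma~\ref{lem:ELLemma4} apply with $\psi=h_t$ and $z=\gamma(t)$, yielding
\[
C^{-1}|\gamma(t)|^{K_1^{-1}}\le |\widetilde\gamma(t)|\le C|\gamma(t)|^{K_1},\qquad
|\arg\widetilde\gamma(t)-\arg\gamma(t)|\le K_1\ln|\gamma(t)|+C,
\]
with $K_1,C$ independent of $t$ (since $K$, $z_0$, $B$ are). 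Taking logarithms of the first chain gives $K_1^{-1}\ln|\gamma(t)|-\ln C\le \ln|\widetilde\gamma(t)|\le K_1\ln|\gamma(t)|+\ln C$; since $\ln|\gamma(t)|\to+\infty$, after absorbing the additive constant into a slightly larger multiplicative one we get $\frac1{C'}\ln|\gamma(t)|\le\ln|\widetilde\gamma(t)|\le C'\ln|\gamma(t)|$ for large $t$ with a uniform $C'>1$. The second inequality above immediately gives $|\arg\widetilde\gamma(t)-\arg\gamma(t)|\le C'\bigl|\ln|\gamma(t)|\bigr|$ for large $t$ (enlarging $C'$ if needed). These two estimates are exactly the defining conditions of $\widetilde\gamma\asymp\gamma$, as required.

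The only mild subtlety — and the step I would be most careful about — is verifying that the constant $B$ (and the base point $z_0$) in Lemma~\ref{lem:ELLemma4} can indeed be chosen uniformly in $t$; this is precisely what the phrase "continuous family of $K$-qc homeomorphisms satisfying the hypothesis of Lemma~\ref{lem:ELLemma4}" is meant to encode, and in the applications the family $h_t$ will converge uniformly to the identity (cf. Lemma~\ref{lem:distortion}), so $h_t(z_0)\to z_0$ and the bounds hold uniformly for large $t$. One should also note that the hypothesis $f\circ\gamma(t)\to\infty$ is not actually used in this reduction — it is recorded because the conclusion $h_t\circ\gamma\asymp\gamma$ is typically combined afterwards with Lemma~\ref{lem:f-1preserves}, for which $f\circ\gamma(t)\to\infty$ guarantees that $\infty$ is an asymptotic value and the relevant tract structure is available. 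A brief remark to that effect suffices; no further computation is needed, and the bounded type hypothesis on $f$ likewise plays no role in the present lemma beyond consistency with the surrounding development.
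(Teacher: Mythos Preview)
Your proposal is correct and takes exactly the same approach as the paper, which simply states that the proof follows directly from Lemma~\ref{lem:ELLemma4}. You have supplied the details that the paper omits, and your observation that the hypotheses on $f$ play no role in this particular lemma is also accurate.
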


\begin{proof}
	The proof follows directly from Lemma \ref{lem:ELLemma4}.
\end{proof}

We observe here a technical point which plays an important role in the proof of Theorem A:
In Lemma \ref{qcasymp}, it is crucial that $h_t(\infty)=\infty$ for all $t\geq 0$, instead of merely 
having $\lim_{t \to +\infty} h_t(\infty)=\infty$.

The lemma below is a slightly weaker version of Lemma 5 from \cite{el}, that will be sufficient for our purposes.
We include the proof for the convenience of the reader, since it is very short using Lemmas  \ref{lem:f-1preserves} and 
\ref{qcasymp}.

\begin{lem}[Compare \cite{el}, Lemma 5]\label{lem:ELLemma5}
	Let $f$ be a meromorphic function with bounded set of singular values. 
	Consider a curve $\gamma: \R_+ \to \C^*$ with $\gamma(t)\ra\infty$ as $t\ra +\infty$ and assume  that $f\circ\gamma(t)\ra\infty$ as $t\ra+\infty$.   Let $\{h_t: t \geq 0\}$ be a continuous family of $K$-qc homeomorphisms satisfying the hypothesis of Lemma~\ref{lem:ELLemma4}. 
	Then there exists a curve $\tilde \gamma\asymp\gamma$, such that 		
	\begin{equation}\label{eqtn:preimage gammat}
		f\circ\tilde\gamma(t)=h_t\circ f\circ\gamma(t).
	\end{equation}
\end{lem}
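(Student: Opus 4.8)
The plan is to produce $\tilde\gamma$ by following the curve $h_t\circ f\circ\gamma(t)$ back through $f$ along the tract containing $\gamma$. First I would set up the logarithmic model exactly as in the proof of Lemma~\ref{lem:f-1preserves}: let $A$ be a punctured disk around $\infty$, let $G$ be the union of the tracts $T_i$ with $f:T_i\to A$ a universal cover (nonempty since $\infty$ is an asymptotic value and $f$ is of bounded type), put $U:=\exp^{-1}(G)$, $\H_R:=\exp^{-1}(A)=\{\re z>R\}$, and let $F:U\to\H_R$ be the lift of $f$, so that on each component $U_0$ of $U$ the map $F:U_0\to\H_R$ is a conformal isomorphism with inverse branch $F_{U}^{-1}$. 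For $t$ large, $\gamma(t)\in G$ and $f\circ\gamma(t)\in A$; pick the lift $\delta(t):=\ln|\gamma(t)|+i\arg\gamma(t)$ of $\gamma$ lying in one component $U_0$, so $F\circ\delta(t)$ is the corresponding lift of $f\circ\gamma(t)$ in $\H_R$.

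Next I would define, for $t$ large, the point $w(t):=\ln|h_t(f\circ\gamma(t))|+i\arg h_t(f\circ\gamma(t))\in\H_R$ — a lift of $h_t\circ f\circ\gamma(t)$, choosing the branch of the argument continuously and close to $\arg f\circ\gamma(t)$, which is legitimate since by Lemma~\ref{qcasymp} (applied with the curve $f\circ\gamma$, which goes to $\infty$ and on which we only need that $h_t$ fixes $0,\infty$) we have $h_t\circ (f\circ\gamma)\asymp f\circ\gamma$; in particular $|\arg h_t(f\circ\gamma(t))-\arg f\circ\gamma(t)|=O(\ln|f\circ\gamma(t)|)$ and $\ln|h_t(f\circ\gamma(t))|\asymp\ln|f\circ\gamma(t)|$, so $w(t)\in\H_R$ for $t$ large and $\re w(t)\to+\infty$. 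Then set $\tilde\delta(t):=F_U^{-1}(w(t))\in U_0$ and $\tilde\gamma(t):=\exp(\tilde\delta(t))$. By construction $f\circ\tilde\gamma(t)=h_t\circ f\circ\gamma(t)$, which is \eqref{eqtn:preimage gammat}, and $\tilde\gamma(t)\to\infty$ because $\re\tilde\delta(t)\to+\infty$ (the preimage under $F$ of points going to $\infty$ in $\H_R$ escapes to $\infty$ in $U_0$). It remains only to check $\tilde\gamma\asymp\gamma$. For this I would apply Lemma~\ref{lem:f-1preserves} to the pair $\gamma_1:=\tilde\gamma$, $\gamma_2:=\gamma$: both go to $\infty$, both have $f$-images going to $\infty$, and $f\circ\tilde\gamma=h_t\circ f\circ\gamma\asymp f\circ\gamma$ by Lemma~\ref{qcasymp} as noted above; hence $\tilde\gamma\asymp\gamma$, completing the proof.

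The one point requiring care — and the main obstacle — is making sure the branch of $\tilde\delta$ is chosen consistently so that $\tilde\gamma$ is genuinely a curve (continuous in $t$) and so that the asymptotics $f\circ\gamma(t)\to\infty$, $h_t\circ f\circ\gamma(t)\to\infty$ actually force $w(t)$ to lie in $\H_R$ and its real part to diverge; this is exactly where the hypothesis that each $h_t$ fixes $\infty$ (not merely $h_t(\infty)\to\infty$), feeding into Lemma~\ref{qcasymp}, is essential, precisely as flagged in the remark following Lemma~\ref{qcasymp}. Once $w(t)\in\H_R$ is secured, everything else is a direct application of Lemmas~\ref{lem:f-1preserves} and~\ref{qcasymp}, so the argument is short.
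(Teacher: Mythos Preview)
Your proposal is correct and follows essentially the same route as the paper: construct $\tilde\gamma$ as a lift of $h_t\circ f\circ\gamma$ through the covering $f$ over a punctured neighborhood of $\infty$, then invoke Lemma~\ref{qcasymp} to get $f\circ\tilde\gamma\asymp f\circ\gamma$ and Lemma~\ref{lem:f-1preserves} to conclude $\tilde\gamma\asymp\gamma$. The only difference is cosmetic: you spell out the lift explicitly via the logarithmic model $F:U_0\to\H_R$, whereas the paper simply asserts existence of $\tilde\gamma$ from the covering property and moves on.
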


\begin{proof}
	 Since $f\circ\gamma(t) \to \infty$, we know that $\infty$ is an asymptotic value. Hence the existence of a curve $\tilde \gamma (t) \to \infty$ satisfying $f \circ \tilde \gamma= h_t \circ f \circ \gamma$
	follows from the observation that $h_t \circ f \circ \gamma(t) \to \infty$, and that $f$ is a covering over a punctured 
	neighborhood of $\infty$.
	
	Then, by Lemma \ref{qcasymp} we have $h_t \circ f\circ\gamma \asymp f\circ \gamma$, so by definition of $\tilde{\gamma}$, we have $f \circ \tilde \gamma \asymp f \circ \gamma$.
	Finally, by Lemma \ref{lem:f-1preserves} we have $\tilde \gamma \asymp \gamma$.
\end{proof}

\subsection{Proof of Theorem A} \label{subs:A}
From now on, we assume that our maps are of finite type. 
	
	By assumption, there is a curve $t \mapsto \lam(t)$ in parameter space with $\lam(t) \to \lam_0$, 
	and a cycle of period $n$ exiting the domain along this curve. For simplicity, we will write $f_t, \phi_t, \psi_t$ 
	instead of $f_{\lam(t)}, \phi_{\lam(t)}, \psi_{\lam(t)}$ and $f$ instead of $f_{\lam_0}$ or $f_0$. Analogously, if $v$ is a singular value for $f$ we define $v(t)=\phi_t(v)$ to be the corresponding value for $f_t$.   
	The assumptions that the singular set is finite is necessary in order to ensure that any  asymptotic value $v$ given by Lemma~\ref{lem:limcycle} is isolated, and hence that the maps $g_t:=\frac{1}{f_t(z)-v(t)}$ are  of bounded type, property which is needed in the proof of Lemma~\ref{lem:ELlemmod}.



Let us introduce some further notations. 
We denote by $x_1(t), \ldots, x_n(t)$  the points of the cycle of period $n$ for $f_t$ which exits the domain, 
and assume without loss of generality that $x_n(t) \to \infty$ as $t\to \infty$ (i.e. $\lambda(t)\to\lambda_0$). 
Recall that by Lemma~\ref{lem:limcycle} the  points $a_i=\limt x_i(t)$, $i=1,\ldots, n$ (with indices taken modulo $n$),  form a virtual cycle, hence at least one of them is an asymptotic value.

Therefore, in order to prove Theorem A, we must prove that if the virtual cycle does not contain any active critical point, then at least one  asymptotic value in that virtual cycle is active.
We assume for a contradiction that all asymptotic relations associated to the limit virtual cycle are preserved (that is, every singular value obtained as a limit of one of the $x_i(t)$ remains in the backward orbit of $\infty$ for $\lambda$ in a neighborhood of $\lambda_0$, and therefore  throughout $M$), and the same for critical points belonging to the virtual cycle.
 
In other words, we assume that for \emph{all} $1 \leq i \leq n$ such that $a_i \in S(f)$, we have $f_t^{n-i}(\phi_t(a_i))=\infty$ for all $t>0$. (Recall that if $a_i$ is a singular value of $f$, then $\phi_t(a_i)$ is a singular value of the same nature for $f_t$).

We define a new family of curves $y_1(t), \ldots, y_n(t)$, which  record the orbits of all asymptotic values involved in the limit cycle (see Figure~\ref{fig:proofEL}). 
More precisely, define
\begin{itemize}
	\item if $x_i(t) \to \infty$, then $y_i(t):=\infty$
	\item if $x_{i-1}(t) \to \infty$, then $x_i(t) \to v_i$, where $v_i$ is some asymptotic value of $f$; then we set 
	$y_i(t):=\phi_t(v_i) \to v_i$, which is an asymptotic value for $f_t$. 
	\item if $y_{i-1}(t) \in \C$, then $y_i(t):=f_t(y_{i-1}(t))$.
\end{itemize}

\begin{figure}[hbt!]
	\begin{center}
		\def\svgwidth{0.8\textwidth}
		\begingroup%
		\makeatletter%
		\setlength{\unitlength}{\svgwidth}%
		\makeatother%
		\begin{picture}(1,0.56238517)%
			\put(0,0){\includegraphics[width=\unitlength]{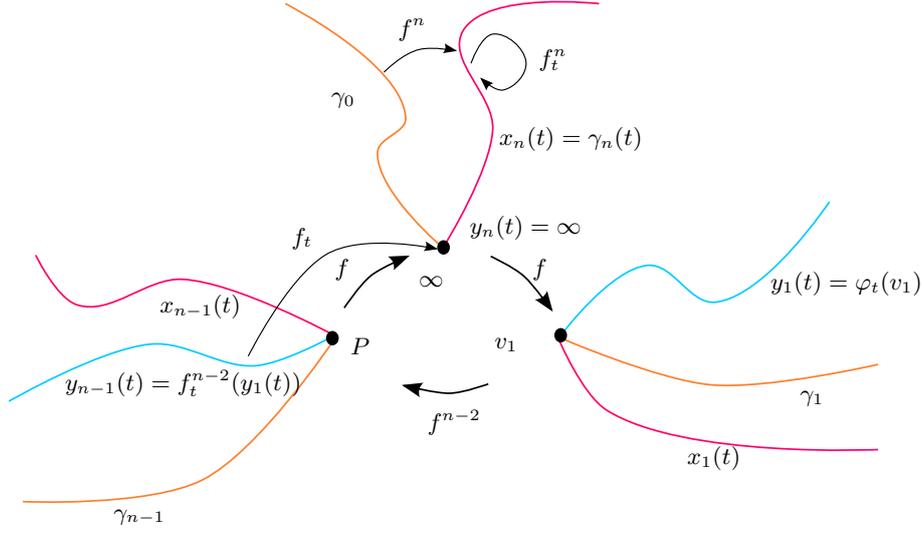}}%
			\put(0.43384346,0.25847996){\color[rgb]{0,0,0}\makebox(0,0)[lb]{\footnotesize{$\infty$}}}%
			\put(0.36161772,0.18625422){\color[rgb]{0,0,0}\makebox(0,0)[lb]{\footnotesize{$P$}}}%
			\put(0.51408657,0.18930485){\color[rgb]{0,0,0}\makebox(0,0)[lb]{\footnotesize{$v_1$}}}%
			\put(0.44303083,0.1){\color[rgb]{0,0,0}\makebox(0,0)[lb]{\footnotesize{$f^{n-2}$}}}%
			\put(0.34490625,0.26442089){\color[rgb]{0,0,0}\makebox(0,0)[lb]{\footnotesize{$f$}}}%
			\put(0.55421969,0.26449877){\color[rgb]{0,0,0}\makebox(0,0)[lb]{\footnotesize{$f$}}}%
			\put(0.51882362,0.40247209){\color[rgb]{0,0,0}\makebox(0,0)[lb]{\footnotesize{$x_n(t)=\gamma_n(t)$}}}%
			\put(0.71777968,0.06275801){\color[rgb]{0,0,0}\makebox(0,0)[lb]{\footnotesize{$x_1(t)$}}}%
			\put(0.16016135,0.22381765){\color[rgb]{0,0,0}\makebox(0,0)[lb]{\footnotesize{$x_{n-1}(t)$}}}%
			\put(0.48769444,0.30773106){\color[rgb]{0,0,0}\makebox(0,0)[lb]{\footnotesize{$y_n(t)=\infty$}}}%
			\put(0.80575344,0.24817958){\color[rgb]{0,0,0}\makebox(0,0)[lb]{\footnotesize{$y_1(t)=\phi_t(v_1)$}}}%
			\put(0.06000657,0.14125758){\color[rgb]{0,0,0}\makebox(0,0)[lb]{\footnotesize{$y_{n-1}(t)=f_t^{n-2}(y_1(t))$}}}%
			\put(0.34152264,0.44848911){\color[rgb]{0,0,0}\makebox(0,0)[lb]{\footnotesize{$\gamma_0$}}}%
			\put(0.11143739,0.00455996){\color[rgb]{0,0,0}\makebox(0,0)[lb]{\footnotesize{$\gamma_{n-1}$}}}%
			\put(0.83688264,0.13043011){\color[rgb]{0,0,0}\makebox(0,0)[lb]{\footnotesize{$\gamma_1$}}}%
			\put(0.29956594,0.29825702){\color[rgb]{0,0,0}\makebox(0,0)[lb]{\footnotesize{$f_t$}}}%
			\put(0.41190165,0.52022156){\color[rgb]{0,0,0}\makebox(0,0)[lb]{\footnotesize{$f^n$}}}%
			\put(0.56078033,0.48773893){\color[rgb]{0,0,0}\makebox(0,0)[lb]{\footnotesize{$f_t^n$}}}%
		\end{picture}%
		\endgroup%
	\end{center}
	\caption{\label{fig:proofEL} \small An illustration of the proof of Theorem A
		in a simple case in which there is only one pole $P$ and one asymptotic value $v_1$ involved. Here $a_n=\infty$, $a_1=v_1$, and $a_{n-1}=P$. Under the contradiction 
		assumption that the singular relation involving $v_1$ is persistent we have that 
		$f_t(y_{n-1}(t))=f_t^{n-1}(\phi_t(v_1))=\infty$. This allows to construct the curves 
		$\gamma_i$ as pullbacks of  the curve $\gamma_n$, obtaining $\gamma_0$ such that 
		$f^n(\gamma_0)=\gamma_n:=x_n$ yet $\gamma_0\asymp \gamma_n$.}
\end{figure}

The assumption that all singular relations 
associated to the limit virtual cycle are preserved implies that this definition is coherent, and that  $y_1(t), \ldots, y_n(t)$ also forms a virtual cycle under $f_t$ for every $t$. In particular, if $x_{i-1}(t)\to \infty$ and $x_i(t)\to a_i=\infty$, this means that $\infty$ is an asymptotic value of $f$ , and the assumption forces it to be persistent, i.e.  $\phi_t(\infty)=\infty=y_i(t)$.
%
Note that we also have $\lim_{t \to +\infty} y_i(t)=a_i$.

The idea of the proof is as follows. Each point on the curve $x_n(t)$ is mapped to itself by $f_t^n$. But since $f_t$ is asymptotically close to $f$ when $t\to\infty$, one could think that these points are mapped ``very close to themselves'' under $f^n$ when $t$ is large enough, in contradiction with Lemma \ref{el-lemma3}. To formalize this idea we consider a third set of curves $\gamma_n(t):=x_n(t), \gamma_{n-1}(t)=f^{-1}(x_n(t)), \ldots, \gamma_0(t)=f^{-n}(x_n(t))$,  the pull backs of  $x_n(t)$ under $f$ (for appropriate branches of the inverse) close to the virtual cycle. We shall show that the $n-$th pullback $\gamma_0$ is very close to $\gamma_n=x_n$ (more precisely  $\gamma_0\asymp\gamma_n$)  while $f^n(\gamma_0)=\gamma_n=x_n$,  which will give  a contradiction through Lemma \ref{el-lemma3}.


We summarize our goal in the following Lemma.

\begin{lem}[Key lemma] \label{lem:gkey} 
	There exist two curves $\gamma_0, \gamma_{n}$ with 
	$$f^{n}(\gamma_{0}(t)) = \gamma_n(t), \quad \lim_{t \to +\infty} \gamma_{0}(t)=\lim_{t \to +\infty} \gamma_n(t)=\infty, \quad \text{ and }\quad \gamma_0 \asymp \gamma_n.$$
	
\end{lem}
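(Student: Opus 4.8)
The strategy is to construct the curve $\gamma_0$ as an $n$-fold pullback of $\gamma_n := x_n$ along the virtual cycle, tracking carefully at each step whether we are pulling back through a finite point of the virtual cycle (where $f$ is a local homeomorphism, so pullback is immediate and harmless) or through $\infty$ (where we must pull back through a tract of an asymptotic value, and this is where $\asymp$-control is needed). First I would go around the cycle backwards: set $\gamma_n := x_n$ and, for each $i$ decreasing from $n$ to $1$, define $\gamma_{i-1}$ as the appropriate branch of $f^{-1}(\gamma_i)$ staying close to $a_{i-1}$. Concretely, if $a_{i-1}\in\C$, then since $a_i = f(a_{i-1})$ and $\gamma_i \to a_i$, we can lift $\gamma_i$ by the univalent branch of $f^{-1}$ near $a_{i-1}$ (if $a_{i-1}$ is critical it is passive by hypothesis, and one handles the finite branched case using Remark~\ref{rem:Gbranched}); in this case the $\asymp$-type relation between $\gamma_{i-1}$ and the reference curve is preserved trivially since both stay bounded near $a_{i-1}$, or more precisely the relevant comparison is only needed in the escaping-to-$\infty$ portions of the cycle. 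If $a_{i-1} = \infty$ (so $a_i$ is an asymptotic value $v_i$ of $f$), then the lift $\gamma_{i-1}$ is obtained inside a tract $T$ over $v_i$; here I invoke Lemma~\ref{lem:ELLemma5} (with $f$ replaced by $g = 1/(f - v_i)$ when $v_i$ is finite, which is of bounded type because the singular set is finite and hence $v_i$ is isolated) to obtain $\gamma_{i-1} \asymp x_{i-1}$.

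The heart of the matter is the bookkeeping that relates the $\gamma_i$ to the $x_i$ at every step, so that after going all the way around we conclude $\gamma_0 \asymp \gamma_n = x_n$. The key comparison is that $x_i(t) = f_t(x_{i-1}(t)) = \phi_t \circ f \circ \psi_t^{-1}(x_{i-1}(t))$, whereas $\gamma_i(t) = f(\gamma_{i-1}(t))$; the discrepancy between $f_t$ and $f$ is measured by the quasiconformal homeomorphisms $\phi_t, \psi_t$, which tend to the identity and fix $\infty$. Using Lemma~\ref{qcasymp} (crucially relying on $\psi_t(\infty) = \infty$, as emphasized after that lemma) together with Lemma~\ref{lem:f-1preserves}, one propagates $\asymp$-equivalence through each pullback step past $\infty$: if $x_i \asymp \gamma_i$ and $a_{i-1} = \infty$, then $x_{i-1} \asymp \gamma_{i-1}$. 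Chaining these $\asymp$ relations around the $n$ steps of the cycle — trivial at the finite $a_j$, and controlled by the qc-distortion estimates at the $a_j = \infty$ — yields $\gamma_0 \asymp x_n = \gamma_n$, while by construction $f^n(\gamma_0) = \gamma_n$ and both curves tend to $\infty$.

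I expect the main obstacle to be handling the steps of the cycle where $a_{i-1}$ is a \emph{finite} point (in particular a pole, which occurs as the predecessor of an $a_i = \infty$): at such steps $\gamma_i$ escapes to infinity while we want to lift it to a curve near the finite point $a_{i-1}$. One must check that the branch of $f^{-1}$ near $a_{i-1}$ is well-defined along the tail of $\gamma_i$, and verify that this lift is exactly the curve $\gamma_{i-1}$ needed for the subsequent tract-pullback. The subtlety is that $\asymp$ is a statement about curves escaping to $\infty$ (or to $0$), so the comparison must be organized so that whenever we compare two curves via $\asymp$, both are indeed escaping; across the finite portions of the virtual cycle the relevant statement is simply that the finite pieces match up continuously, and the $\asymp$-bookkeeping is only invoked on the stretches between two consecutive occurrences of $\infty$ in the cycle (here using Remark~\ref{rem:power of curves} to absorb the local degree of $f$ at any critical point of the cycle). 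Once the comparison is correctly localized, the chain of lemmas from Subsection~\ref{subs:A} closes the argument.
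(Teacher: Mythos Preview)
Your overall architecture matches the paper's: set $\gamma_n := x_n$, pull back step by step along the virtual cycle, and maintain an $\asymp$-type comparison with the $x_i$ throughout. The paper organizes the induction exactly this way, splitting into the two cases $a_{i-1}\in\C$ (Lemma~\ref{lem:univpullback}) and $a_{i-1}=\infty$ (Lemma~\ref{lem:tractpb}), and your invocation of Lemmas~\ref{lem:f-1preserves}, \ref{qcasymp}, \ref{lem:ELLemma5} (with $g=1/(f-v_i)$ for finite $v_i$) is on target.

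There is, however, one genuine missing idea. You correctly diagnose the difficulty---that $\asymp$ only makes sense for curves escaping to $0$ or $\infty$, so at the \emph{finite} points $a_i$ of the virtual cycle you need some other invariant to carry---but your proposed resolution (``the finite pieces match up continuously'') does not suffice. What the paper does is introduce auxiliary curves $y_i(t)$: these are the $f_t$-orbits of the asymptotic values $\phi_t(v_i)$, reset at each occurrence of $\infty$, and the contradiction hypothesis (all singular relations persistent) guarantees that $y_i(t)\to a_i$ and that $\deg(f_t, y_{i-1}(t)) = \deg(f,a_{i-1})$. The correct inductive invariant at finite $a_i$ is then
\[
\gamma_i - a_i \;\asymp\; x_i - y_i,
\]
not merely $\gamma_i\to a_i$ and $x_i\to a_i$. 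This is what makes the local power-series comparison work: near $a_{i-1}$ one has $f(z)-a_i = c(z-a_{i-1})^{d_i}+\cdots$, while near $y_{i-1}(t)$ one has $f_t(z)-y_i(t) = c(t)(z-y_{i-1}(t))^{d_i}+\cdots$ with the \emph{same} exponent $d_i$ (by persistence), whence $(\gamma_{i-1}-a_{i-1})^{d_i}\asymp (x_{i-1}-y_{i-1})^{d_i}$ and Remark~\ref{rem:power of curves} applies. If instead you compare $x_{i-1}$ to the fixed point $a_{i-1}$, the expansion of $f_t$ is centered at the wrong point and the degrees need not match. Similarly, in the tract-pullback step with $a_i\in\C$, Lemma~\ref{lem:ELlemmod} outputs the relation $f(\tilde\gamma)-v = f_t(x_{i-1}) - v(t)$, which is exactly $x_i - y_i$; without the $y_i$ you cannot feed this back into the induction.

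A minor point: your reference to Remark~\ref{rem:Gbranched} to handle critical $a_{i-1}$ is misplaced---that remark concerns branched covers in \emph{parameter} space, whereas here you are lifting in the dynamical plane. The correct treatment is simply the local power-map model $z\mapsto z^{d_i}$ together with Remark~\ref{rem:power of curves}, as above.
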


\begin{proof}[Proof of Theorem A assuming Lemma \ref{lem:gkey}]
	The map $f^n$ is of finite type and since $\gamma_0$ and its image $\gamma_n$ both tend to $\infty$, it follows that $\infty$ is an  asymptotic value of $f^n$. Hence there exists a simply connected tract $T$ which maps to a punctured neighborhood of $\infty$ as a universal covering, and contains the curve $\gamma_0(t)$ for $t$ large enough.
	
	By Lemma \ref{el-lemma3} applied to $f^n$ and $\gamma_{0}(t)$, we have for all $t$ large enough:
	\begin{equation}
		\ln^2| \gamma_{n}(t_k)| + \arg^2 \gamma_{n}(t_k) \geq 
		 C\exp\left(\ln |\gamma_0(t_k)|\left(1 + \frac{\arg^2\gamma_0(t_k)}{\ln^2 |\gamma_0(t_k)|}\right)\right)
	\end{equation}
	for some sequence $t_k\to \infty$.
	
	On the other hand, by the assumption that $\gamma_{n} \asymp \gamma_{0}$, we have:
	\begin{align}
		\ln |\gamma_{n}(t)| &= \ln|\gamma_{0}(t)|+O(1)\\
		\arg \gamma_n(t) &= \arg \gamma_{0}(t) + O(\ln |\gamma_0(t)|)
	\end{align}
	which leads to a contradiction.
\end{proof}

The proof of  Lemma \ref{lem:gkey} is done by induction. We start with the curve $\gamma_n:=x_n\ra a_n=\infty$. Then, given a curve $\gamma_i(t) \to a_i$ with $i=n\ldots 1$ we will find a curve $\gamma_{i-1}(t)\to a_{i-1}$ which is an appropriate  pullback of $\gamma_i$ under $f$.  This step is  divided into two main cases:  the case in which $a_{i-1} \in \C$ (Lemma \ref{lem:univpullback}) and the case in which $a_{i-1}= \infty$ (Lemma \ref{lem:tractpb}).

\begin{lem}\label{lem:univpullback}
	Let $\gamma_i$ be a curve  such that $\gamma_i(t) \to a_i$ with $\gamma_i(t) \neq a_i$ for all $t>0$, and assume  that $a_{i-1} \in \C$
	and that either $\gamma_i(t)-a_i \asymp x_i(t)-y_i(t)$ (if $a_i \in \C$) or $\gamma_i(t) \asymp x_i(t)$
	(if $a_i=\infty$).  
	Then there exists a curve $\gamma_{i-1}$ such that
	\begin{enumerate}
		\item $f\circ\gamma_{i-1}(t)=\gamma_i(t)$ and $\gamma_{i-1}(t) \neq a_{i-1}$ for all $t>0$
		\item $\gamma_{i-1}(t) \to a_{i-1}$
		\item $\gamma_{i-1}(t)-a_{i-1} \asymp x_{i-1}(t)-y_{i-1}(t)$.
	\end{enumerate}
\end{lem}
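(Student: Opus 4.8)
The strategy is to pull back the curve $\gamma_i$ under $f$ using the fact that near the finite point $a_{i-1}$ the map $f$ is a local homeomorphism (or a finite-degree branched cover, if $a_{i-1}$ is a critical point), and then to compare the resulting curve with the known pullback $x_{i-1}(t)-y_{i-1}(t)$ using the distortion estimates of Section \ref{subsec:asympv} (in particular Lemma \ref{lem:distortion}). First I would fix a small disk $\Delta$ around $a_{i-1}$ on which $f$ is proper of some degree $d \geq 1$ onto a disk $\Delta'$ around $f(a_{i-1})=a_i$. For $t$ large, $\gamma_i(t) \in \Delta'$; since $\gamma_{i-1}$ should satisfy $\gamma_{i-1}(t_0)$ close to $x_{i-1}(t_0)$ for some large $t_0$ (the curve $x_{i-1}$ itself being a pullback of $x_i$ lying near $a_{i-1}$), I would pick the branch of $f^{-1}$ (or the local lift in the branched case) sending $\gamma_i(t_0)$ to the point on that branch near $x_{i-1}(t_0)$, and propagate it along the curve; this defines $\gamma_{i-1}$ with $f\circ\gamma_{i-1}=\gamma_i$ and $\gamma_{i-1}(t)\to a_{i-1}$, giving (1) and (2). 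The condition $\gamma_i(t)\neq a_i$ for all $t$ ensures $\gamma_{i-1}(t)\neq a_{i-1}$.

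The substantive point is (3), the comparison $\gamma_{i-1}(t)-a_{i-1}\asymp x_{i-1}(t)-y_{i-1}(t)$. Here I would work in local coordinates centered at $a_{i-1}$ (resp. $a_i$), so that $f$ takes the form $w\mapsto w^d\cdot u(w)$ with $u$ holomorphic and nonvanishing, $d$ being the local degree of $f$ at $a_{i-1}$. In these coordinates, $\gamma_{i-1}(t)-a_{i-1}$ is obtained from $\gamma_i(t)-a_i$ by extracting a $d$-th root and multiplying by a bounded factor bounded away from $0$; the same is true for $x_{i-1}(t)-y_{i-1}(t)$ versus $x_i(t)-y_i(t)$, \emph{except} that the coordinate for $f_t$ differs from that for $f$ by the quasiconformal homeomorphisms $\psi_t,\phi_t$, which tend to the identity as $t\to+\infty$. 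By Remark \ref{rem:power of curves}, $\asymp$ is preserved under taking $d$-th powers and roots, so the hypothesis ($\gamma_i(t)-a_i\asymp x_i(t)-y_i(t)$ when $a_i\in\C$, or $\gamma_i(t)\asymp x_i(t)$ when $a_i=\infty$) transfers to the pullbacks once I account for the qc coordinate change. Controlling that coordinate change is exactly where Lemma \ref{lem:distortion} enters: applied to the holomorphic motion $\psi_\lambda$ (which fixes $\infty$) along the path $\lambda(t)$, with $r_t$ of the order of $|\gamma_i(t)-a_i|$, it shows that $\psi_{\lambda(t)}$ distorts the relevant small disks by at most a power $r_t^{\pm\epsilon}$, which in $\log$-coordinates is a bounded multiplicative perturbation — precisely the content of $\asymp$.

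The main obstacle I expect is bookkeeping the case distinction $a_i\in\C$ versus $a_i=\infty$ cleanly while simultaneously handling $d\geq 2$ (i.e. $a_{i-1}$ a critical point) and the non-uniqueness of the lift flagged in Remark \ref{rem:Gbranched}: one must make sure the branch of the pullback chosen for $\gamma_i$ is the one that shadows the branch giving $x_{i-1}$, not another sheet, since only then does the $\asymp$ comparison hold. A secondary technical care point is that the definition of $\asymp$ controls curves going to $\infty$ or to $0$; when $a_{i-1}\in\C$ is nonzero, the statement should be read with the translated curves $\gamma_{i-1}-a_{i-1}$ etc., which is how the lemma is phrased, so one first reduces to the case $a_{i-1}=0$ by a translation (a bounded, smooth coordinate change that trivially preserves $\asymp$ for curves tending to $0$). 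Once these reductions are in place, (3) follows by combining the local power-map normal form, Remark \ref{rem:power of curves}, and Lemma \ref{lem:distortion}; no genuinely new estimate beyond Section \ref{subsec:asympv} is needed.
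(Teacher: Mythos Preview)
Your skeleton is right---local power-map normal form plus Remark \ref{rem:power of curves}---but you misidentify the key input and add an unnecessary worry.

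First, the branch issue. You flag as the ``main obstacle'' that one must choose the lift of $\gamma_i$ shadowing $x_{i-1}$. In fact the paper takes \emph{any} lift $\gamma_{i-1}\to a_{i-1}$ and shows $(\gamma_{i-1}-a_{i-1})^{d_i}\asymp \gamma_i-a_i$ (resp.\ $\asymp \gamma_i$ when $a_i=\infty$). Since Remark \ref{rem:power of curves} is an equivalence, from $(\gamma_{i-1}-a_{i-1})^{d_i}\asymp (x_{i-1}-y_{i-1})^{d_i}$ one gets $\gamma_{i-1}-a_{i-1}\asymp x_{i-1}-y_{i-1}$ directly. The relation $\asymp$ only sees $\ln|\cdot|$ and $\arg$ modulo an error $O(|\ln|\cdot||)$, so the $2\pi k/d_i$ ambiguity between branches is invisible. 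No shadowing argument is needed.

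Second, and more substantively: you propose controlling the passage from $f$ to $f_t$ via the qc distortion Lemma \ref{lem:distortion}. The paper does not use Lemma \ref{lem:distortion} here at all. Instead it invokes the standing contradiction hypothesis of Theorem A---that all critical (and asymptotic) relations along the virtual cycle are \emph{persistent}---to conclude that $\deg(f_t,y_{i-1}(t))=d_i$. This immediately yields a series expansion $f_t(z)-y_i(t)=c(t)(z-y_{i-1}(t))^{d_i}+o(\cdots)$ with $c(t)\to c\neq 0$, from which $(x_{i-1}-y_{i-1})^{d_i}\asymp x_i-y_i$ follows by the same one-line computation as for $f$. Your route via Lemma \ref{lem:distortion} would still need to know that $y_{i-1}(t)$ is a critical point of $f_t$ of degree $d_i$ (otherwise the local model for $f_t$ near $y_{i-1}(t)$ has a nonvanishing linear term and the $d_i$-th power relation fails); but that is exactly the persistent-relation assumption, so once you use it you are back to the paper's argument and Lemma \ref{lem:distortion} is superfluous. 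Moreover, Lemma \ref{lem:distortion} only controls moduli of disks, not arguments, so on its own it does not yield $\asymp$.

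In short: keep the normal-form and Remark \ref{rem:power of curves} part of your plan, drop the branch-matching concern, and replace the appeal to Lemma \ref{lem:distortion} by a direct use of the persistence assumption to get the degree-$d_i$ expansion for $f_t$ at $y_{i-1}(t)$.
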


\begin{proof}
	First, we choose $\gamma_{i-1}$ to be a lift of $\gamma_i$ by $f$, such that $\gamma_{i-1}(t) \to a_{i-1}$. 
	Note that if $d_i:=\deg(f,a_{i-1})>1$, then there are exactly $d_i$ possible choices (since $\gamma_i(t) \neq a_i$ by assumption). This gives (1) and (2).
	
	Next, we claim that $(\gamma_{i-1}(t)-a_{i-1})^{d_i} \asymp \gamma_i(t)-a_i$ if $a_{\g i} \in \C$,
	and that $(\gamma_{i-1}(t)-a_{i-1})^{-d_i} \asymp \gamma_i(t)$ if $a_i=\infty$. 
	
	This can be seen  easily from the series expansions
	\begin{align*}
		f(z)-a_{i}& =c \cdot (z-a_{i-1})^{d_i} + o((z-a_{i-1})^{d_i}), \text{\ if $a_i\in\C$, or }\\
		f(z) &= c \cdot (z-a_{i-1})^{-d_i} + o((z-a_{i-1})^{-d_i}), \text{\ if $a_i=\infty$},
	\end{align*}
	with $c\neq 0$ (compare Remark~\ref{rem:power of curves}).

	Since critical relations are  assumed to be persistent along the virtual cycle $a_1, \ldots, a_n$, we have $\deg(f_t, y_{i-1}(t))=d_i=\deg(f,a_{i-1})$. Therefore we also have series expansions of the form
	\begin{align*}
		f_t(z)-y_i(t)& =c(t) \cdot (z-y_{i-1}(t))^{d_i} + o((z-y_{i-1}(t))^{d_i}), \text{\ if $a_i\in\C$, or }\\
		f_t(z) &= c(t) \cdot (z-y_{i-1}(t))^{-d_i} + o((z-y_{i-1}(t))^{-d_i}), \text{\ if $a_i=\infty$},
	\end{align*}
	where $c(t) \to c \neq 0$. Since $x_{i+1}(t)=f_t(x_i(t))$, it follows that $(x_{i-1}(t)-y_{i-1}(t))^{d_i} \asymp x_i(t)-y_i(t)$ if $a_{i} \in \C$,
	and $(x_{i-1}(t) - y_{i-1}(t))^{d_i} \asymp x_i(t)$ if $a_i=\infty$.
	
	Therefore:
	\begin{enumerate}
		\item If $a_i=\infty$, then by assumption we have $\gamma_i \asymp x_i$, and we have proved 
		that $(\gamma_{i-1}-a_{i-1})^{d_i}\asymp \gamma_i$ and $(x_{i-1}-a_{i-1})^{d_i} \asymp x_i$;
		therefore $(\gamma_{i-1}-a_{i-1})^{d_i} \asymp (x_{i-1}-a_{i-1})^{d_i}$, 
		which in turn implies $\gamma_{i-1}-a_{i-1} \asymp x_{i-1}-a_{i-1}$ (see again Remark~\ref{rem:power of curves}).
		\item If $a_i \in \C$, then similarly: by assumption, we have $\gamma_i - a_i \asymp x_i - y_i$,
		and we have proved that $(\gamma_{i-1}-a_{i-1})^{d_i} \asymp \gamma_i - a_i$ and $(x_{i-1}-y_{i-1})^{d_i} \asymp x_i-y_i$.
		Therefore we again have $(\gamma_{i-1}-a_{i-1})^{d_i} \asymp (x_{i-1}-a_{i-1})^{d_i}$
		and finally $\gamma_{i-1}-a_{i-1} \asymp x_{i-1}-a_{i-1}$.
	\end{enumerate}
\end{proof}

We now turn to the other case, $a_{i-1}=\infty$.
Before proving the analogue of Lemma \ref{lem:univpullback}, namely Lemma \ref{lem:tractpb},
we will require the following modification of Lemma \ref{lem:ELLemma5} adapted to the case of a finite asymptotic value:

\begin{lem}\label{lem:ELlemmod}
	Let $\gamma(t) \to \infty$ be a curve such that $f_t(\gamma(t)) \to v \in \C$. Then, there exists a curve $\gamma'(t) \to \infty$ such that $\gamma' {\asymp} \gamma$ and $f(\gamma'(t))-v=f_t(\gamma(t))-v(t)$, where $v(t)=\phi_t(v)$.
\end{lem}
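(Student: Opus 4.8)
The plan is to mimic the proof of Lemma \ref{lem:ELLemma5}, but working with the auxiliary maps that send the asymptotic value $v$ to $\infty$. Concretely, set $g := 1/(f(\cdot) - v)$ and $g_t := 1/(f_t(\cdot) - v(t))$, where $v(t) = \phi_t(v)$. Since $f$ is of finite type and $v$ is an isolated singular value (this is where finiteness of the singular set is used, as remarked before the statement), both $g$ and $g_t$ are meromorphic maps of bounded type, and $\infty$ is an asymptotic value of each of them, with a tract $\widetilde T_t$ over $\infty$ that is the image under an inversion (centered at $v$, resp.\ $v(t)$) of a tract of $f$, resp.\ $f_t$, over $v$. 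The hypothesis $f_t(\gamma(t)) \to v$ with $\gamma(t)\to\infty$ translates into $g_t(\gamma(t)) \to \infty$.

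First I would relate $g_t$ to $g$ through a continuous family of $K$-quasiconformal homeomorphisms fixing $\infty$. Writing $f_t = \phi_t \circ f \circ \psi_t^{-1}$ with $\phi_t,\psi_t \to \id$ and $\psi_t(\infty)=\infty$, one computes
\[
g_t(z) = \frac{1}{f_t(z) - v(t)} = \frac{1}{\phi_t(f(\psi_t^{-1}(z))) - \phi_t(v)} = \Phi_t\!\left( g(\psi_t^{-1}(z)) \right),
\]
where $\Phi_t(w) := 1/\big(\phi_t(v + 1/w) - \phi_t(v)\big)$ is, for each $t$, a quasiconformal homeomorphism of $\widehat{\C}$ fixing $0$ and $\infty$ (it is the conjugate of $\phi_t$ by the inversions $w\mapsto v+1/w$ and $w\mapsto 1/(w-v(t))$), with $\Phi_t \to \id$. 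Thus $g_t = \Phi_t \circ g \circ \psi_t^{-1}$, which is exactly the structure needed: $\Phi_t$ plays the role of $h_t$ in Lemma \ref{lem:ELLemma5} and, crucially, $\Phi_t(\infty)=\infty$ for every $t$ (not merely in the limit), while $\psi_t^{-1}$ is absorbed harmlessly.

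Next I would apply (the argument of) Lemma \ref{lem:ELLemma5} to $g$ in place of $f$: since $g\circ(\psi_t^{-1}\circ\gamma)(t)$ and hence $\Phi_t\circ g\circ(\psi_t^{-1}\circ\gamma)(t) = g_t(\gamma(t))$ tend to $\infty$, and $\psi_t^{-1}\circ\gamma \asymp \gamma$ by Lemma \ref{qcasymp} applied to the qc family $\psi_t^{-1}$, Lemma \ref{lem:f-1preserves} and Lemma \ref{qcasymp} combine to produce a curve $\gamma'(t)\to\infty$ with $\gamma' \asymp \gamma$ and $g(\gamma'(t)) = \Phi_t^{-1}\big(g_t(\gamma(t))\big)$. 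Unwinding the definition of $g$ and $\Phi_t$, the identity $g(\gamma'(t)) = \Phi_t^{-1}(g_t(\gamma(t)))$ reads
\[
\frac{1}{f(\gamma'(t)) - v} = \frac{1}{\phi_t^{-1}\big(f_t(\gamma(t))\big) - v} = \frac{1}{\psi_t\big(\,\cdot\,\big)\text{-free expression}}\,,
\]
which, after clearing the inversions, is precisely $f(\gamma'(t)) - v = f_t(\gamma(t)) - v(t)$, as desired. (One checks directly that $\Phi_t^{-1}(w) = 1/\big(\phi_t^{-1}(v(t) + 1/w) - v\big)$, so $g(\gamma') = \Phi_t^{-1}(g_t(\gamma))$ gives $f(\gamma') - v = \phi_t^{-1}(f_t(\gamma)) \cdot$ — here I would just verify the algebra.)

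The main obstacle is the bookkeeping around the conjugating homeomorphism: one must check carefully that $\Phi_t$ (the transported version of $\phi_t$) genuinely satisfies the hypotheses of Lemma \ref{lem:ELLemma4}/\ref{qcasymp} — that it is $K$-quasiconformal with $K$ uniform in $t$, fixes $0$ and $\infty$, and satisfies the normalization bound at a base point — and that composing with $\psi_t^{-1}$ on the source does not disturb the $\asymp$-class, for which the hypothesis $\psi_t(\infty)=\infty$ is exactly what is needed (again the point flagged after Lemma \ref{qcasymp}). Once this is set up, the conclusion follows formally from Lemmas \ref{lem:f-1preserves}, \ref{qcasymp} and \ref{lem:ELLemma5} applied to $g$; no genuinely new estimate is required.
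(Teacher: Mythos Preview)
Your approach is essentially identical to the paper's: set $g=1/(f-v)$ and $g_t=1/(f_t-v(t))$, rewrite $g_t=\tilde\phi_t\circ g\circ\psi_t^{-1}$ with $\tilde\phi_t:=M_t\circ\phi_t\circ M_0^{-1}$ (your $\Phi_t$, where $M_t(z)=1/(z-v(t))$) a quasiconformal homeomorphism fixing $\infty$, and then invoke Lemma~\ref{lem:ELLemma5} for the bounded-type map $g$.

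There is one bookkeeping slip. Applying Lemma~\ref{lem:ELLemma5} with function $g$, curve $\psi_t^{-1}\circ\gamma$, and $h_t=\Phi_t$ yields a curve $\gamma'\asymp\psi_t^{-1}\circ\gamma\asymp\gamma$ satisfying
\[
g(\gamma'(t))=\Phi_t\bigl(g(\psi_t^{-1}(\gamma(t)))\bigr)=g_t(\gamma(t)),
\]
not $g(\gamma')=\Phi_t^{-1}(g_t(\gamma))$ as you wrote. Your version would give $f(\gamma')=\phi_t^{-1}(f_t(\gamma))$, which is \emph{not} the claimed relation. With the correct identity $g(\gamma')=g_t(\gamma)$, unwinding is immediate: $1/(f(\gamma')-v)=1/(f_t(\gamma)-v(t))$, hence $f(\gamma')-v=f_t(\gamma)-v(t)$. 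Once this is fixed, your argument and the paper's coincide.
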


\begin{proof}
	Let $g_t(z):=\frac{1}{f_t(z)-v(t)}$, and $g(z):=\frac{1}{f(z)-v}$. Let $M_t(z):=\frac{1}{z-v(t)}$.
	Then observe that $g=M_0 \circ f$, and
	\begin{equation}
		g_t = M_t \circ f_t = \left( M_t \circ \phi_t \circ M_0^{-1} \right) \circ g \circ \psi_t^{-1}
	\end{equation}
	This shows that $g_t$ is a natural family of bounded type meromorphic maps of the form $g_t=\tilde{\phi_t} \circ g \circ \psi_t^{-1}$, with $\tilde{\phi_t}:= M_t \circ \phi_t \circ M_0^{-1}$. Moreover, $\tilde{\phi_t}$ is a quasiconformal homeomorphism of $\hat{\C}$, and $\tilde{\phi_t}(\infty)=\infty$ (since $M_0^{-1}(\infty)=v$ and $M_t \circ \phi_t(v)=\infty$).
	
	Since we have $g_t(\gamma(t)) \to \infty$, we may apply Lemma \ref{lem:ELLemma5} to $g_t$,
	which gives a curve $\gamma'(t) \to \infty$ such that $\gamma' \sim \gamma$, and $g_t(\gamma(t))=g(\gamma'(t))$.
	
	It remains to check that $f(\gamma'(t))-v=f_t(\gamma(t))-v(t)$. But 
	\begin{align*}
		g_t(\gamma(t)) &=g(\gamma'(t)) \\
		\frac{1}{f_t(\gamma(t))-v(t)} &= \frac{1}{f(\gamma'(t))-v}\\
		f(\gamma'(t))-v&=f_t(\gamma(t))-v(t)
	\end{align*}
	and the lemma is proved.
\end{proof}

\begin{lem}\label{lem:tractpb}
	Let $\gamma_i \to a_i$  be a curve such that either $a_i \in \C$ and
	$\gamma_i(t) - a_i  \asymp x_i(t) - y_i(t)$, or $a_i=\infty$ and $\gamma_i(t) \asymp x_i(t)$. Assume further that $a_{i-1}=\infty$. Then there exists a curve $\gamma_{i-1}$ such that 
	\begin{enumerate}
		\item $f\circ \gamma_{i-1}(t)=\gamma_i(t)$ and   $\gamma_{i-1}(t) \neq \infty$ for all $t >0$
		\item $\gamma_{i-1}(t) \to \infty$ 
		\item $\gamma_{i-1}(t) \asymp x_{i-1}(t).$
	\end{enumerate}
\end{lem}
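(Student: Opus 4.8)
The plan is to mimic the structure of Lemma \ref{lem:univpullback}, but replacing the use of local power-series expansions at a finite point by the tract analysis of Eremenko--Lyubich. First I would dispose of the case $a_i=\infty$: here both $\gamma_i(t)\to\infty$ and $x_i(t)\to\infty$, and by hypothesis $\gamma_i\asymp x_i$. Since $a_{i-1}=\infty$ and $a_i=\infty$, the point $a_{i-1}$ lies in a tract $T$ of $f$ over the asymptotic value $\infty$ (note this tract exists and is isolated because $f$ is of finite type), and likewise $y_{i-1}(t)=\infty$ lies in the corresponding tract of $f_t$. Writing $f_t=\phi_t\circ f\circ \psi_t^{-1}$ with $\phi_t,\psi_t\to\id$ and $\psi_t(\infty)=\infty$, and using that $\phi_t$ fixes $\infty$ as well (this is where $a_i=\infty$ being persistent, forced by our contradiction assumption, enters), I can apply Lemma \ref{lem:ELLemma5} to $f$: since $f\circ(\psi_t^{-1}\circ\gamma_i)(t)=\phi_t^{-1}\circ\gamma_i(t)=\phi_t^{-1}(f_t(\gamma_{i-1}\text{-candidate}))$ tends to $\infty$, there is a curve $\widetilde\gamma\asymp \psi_t^{-1}\circ\gamma_i \asymp \gamma_i$ with $f(\widetilde\gamma(t))=\phi_t^{-1}(\gamma_i(t))$; then $\gamma_{i-1}:=\psi_t(\widetilde\gamma)$ satisfies $f\circ\gamma_{i-1}=\gamma_i$. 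Applying Lemma \ref{qcasymp} to absorb $\psi_t$ gives $\gamma_{i-1}\asymp\widetilde\gamma\asymp\gamma_i\asymp x_i$, and running the identical argument on the actual orbit points $x_i,x_{i-1}$ (which satisfy $f_t(x_{i-1}(t))=x_i(t)$ with $x_{i-1}(t)\to\infty$, $x_i(t)\to\infty$) yields $x_{i-1}\asymp x_i$; combining, $\gamma_{i-1}\asymp x_{i-1}$, which is (3).

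For the case $a_i\in\C$ — the one requiring Lemma \ref{lem:ELlemmod} — I would argue as follows. We have $\gamma_i(t)\to a_i\in\C$, $x_i(t)\to a_i$, $y_i(t)\to a_i$, and $a_{i-1}=\infty$; so $a_i$ is a \emph{finite} asymptotic value of $f$ reached through the tract at $a_{i-1}=\infty$, and $y_i(t)=\phi_t(a_i)=:v(t)$ is the corresponding finite asymptotic value of $f_t$. The key input is Lemma \ref{lem:ELlemmod} applied with the curve $x_{i-1}(t)\to\infty$ (which satisfies $f_t(x_{i-1}(t))=x_i(t)\to a_i$): it produces a curve, call it $\widehat x_{i-1}(t)\to\infty$ with $\widehat x_{i-1}\asymp x_{i-1}$ and $f(\widehat x_{i-1}(t))-a_i = f_t(x_{i-1}(t))-v(t) = x_i(t)-y_i(t)$. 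Now I choose $\gamma_{i-1}$ to be the lift of $\gamma_i$ through the same tract (same branch of $f^{-1}$), so that $f\circ\gamma_{i-1}=\gamma_i$ and $\gamma_{i-1}(t)\to\infty$; this gives (1) and (2). For (3), I must show $\gamma_{i-1}\asymp x_{i-1}$, and since $\widehat x_{i-1}\asymp x_{i-1}$ it suffices to show $\gamma_{i-1}\asymp \widehat x_{i-1}$. Both lie (eventually) in the same tract $T$ of $f$ over a punctured neighborhood of $a_i$; pushing through the logarithmic coordinate $g=\exp^{-1}$ exactly as in the proof of Lemma \ref{lem:f-1preserves} — using that on a tract $F=\exp^{-1}\circ f\circ\exp$ has derivative bounded below linearly in $\mathrm{Re}$, hence its inverse branch is a contraction — reduces the claim to: $f\circ\gamma_{i-1} - a_i \asymp f\circ\widehat x_{i-1}-a_i$ (with $\asymp$ now read near $0$, after the Möbius change $w\mapsto 1/(w-a_i)$ that sends $a_i$ to $\infty$). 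But $f\circ\gamma_{i-1}-a_i=\gamma_i-a_i$ and $f\circ\widehat x_{i-1}-a_i=x_i-y_i$, and by hypothesis $\gamma_i-a_i\asymp x_i-y_i$. So the estimate closes, giving $\gamma_{i-1}\asymp\widehat x_{i-1}\asymp x_{i-1}$, which is (3).

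\textbf{Main obstacle.} I expect the delicate point to be the bookkeeping in the finite-asymptotic-value case: Lemma \ref{lem:ELlemmod} and Lemma \ref{lem:f-1preserves} are both stated for the asymptotic value $\infty$ and for the relation ``$f\circ\gamma_1\asymp f\circ\gamma_2$ implies $\gamma_1\asymp\gamma_2$'', so I must carefully conjugate by the Möbius map $M_0(z)=1/(z-a_i)$ (and its $f_t$-counterpart $M_t(z)=1/(z-v(t))$, which is exactly the device already introduced inside the proof of Lemma \ref{lem:ELlemmod}) to legitimately invoke them, and check that the resulting $\asymp$ relations transfer back — which they do, because these Möbius maps tend to the identity and fix neither $0$ nor $\infty$ but send $a_i\mapsto\infty$ compatibly. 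The other subtlety is that the lift of $\gamma_i$ is only determined up to the $d_i=\deg(f,\cdot)$ choices when $a_i$ is a critical value through the tract — but here $a_{i-1}=\infty$ lies in a logarithmic tract, so $f$ restricted to the tract is a universal covering and the relevant local degree is $1$; there is no branching ambiguity, only the choice of which sheet of $\exp^{-1}(T)$ one lands in, and that is pinned down by requiring $\gamma_{i-1}(t)\to\infty$ along the same tract as $x_{i-1}(t)$ and $\widehat x_{i-1}(t)$, which is the tract dictated by the virtual cycle. Once the conjugations are set up correctly, each step is a direct citation of the preliminary lemmas.
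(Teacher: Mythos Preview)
Your treatment of the case $a_i \in \C$ is essentially the paper's argument: apply Lemma~\ref{lem:ELlemmod} to $x_{i-1}$ to produce an auxiliary curve $\widehat x_{i-1}\asymp x_{i-1}$ with $f(\widehat x_{i-1})-a_i = x_i-y_i$, take $\gamma_{i-1}$ as an $f$-lift of $\gamma_i$ into the tract, and then use Lemma~\ref{lem:f-1preserves} applied to $g=1/(f-a_i)$ to compare $\gamma_{i-1}$ with $\widehat x_{i-1}$. That part is fine.

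Your case $a_i=\infty$, however, does not work as written. You apply Lemma~\ref{lem:ELLemma5} with input curve $\psi_t^{-1}\circ\gamma_i$ and claim to obtain $\widetilde\gamma$ with $f(\widetilde\gamma)=\phi_t^{-1}(\gamma_i)$, then set $\gamma_{i-1}:=\psi_t(\widetilde\gamma)$. But then a direct computation gives
\[
f_t(\gamma_{i-1})=\phi_t\circ f\circ\psi_t^{-1}\bigl(\psi_t(\widetilde\gamma)\bigr)=\phi_t\bigl(f(\widetilde\gamma)\bigr)=\phi_t\bigl(\phi_t^{-1}(\gamma_i)\bigr)=\gamma_i,
\]
so you have produced a preimage under $f_t$, not under $f$; condition~(1) fails. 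More fundamentally, Lemma~\ref{lem:ELLemma5} takes as input a curve $\gamma$ in the \emph{domain} of $f$ with $f\circ\gamma\to\infty$, and returns $\widetilde\gamma\asymp\gamma$; the curve $\psi_t^{-1}\circ\gamma_i$ lives in the \emph{range}, and there is no reason for $f\circ(\psi_t^{-1}\circ\gamma_i)$ to tend to $\infty$. Your subsequent chain $\gamma_{i-1}\asymp\gamma_i\asymp x_i\asymp x_{i-1}$ also relies on $x_i\asymp x_{i-1}$, which is neither proved nor needed.

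The fix is to run the case $a_i=\infty$ in exact parallel to your (correct) case $a_i\in\C$: apply Lemma~\ref{lem:ELLemma5} with input $\gamma:=\psi_t^{-1}(x_{i-1})$ (a curve in the domain, tending to $\infty$, with $f\circ\gamma=\phi_t^{-1}(x_i)\to\infty$) and $h_t:=\phi_t$ to obtain $\widetilde\gamma\asymp\psi_t^{-1}(x_{i-1})\asymp x_{i-1}$ with $f(\widetilde\gamma)=x_i$; then define $\gamma_{i-1}$ directly as an $f$-lift of $\gamma_i$; finally, since $f\circ\gamma_{i-1}=\gamma_i\asymp x_i=f\circ\widetilde\gamma$, Lemma~\ref{lem:f-1preserves} gives $\gamma_{i-1}\asymp\widetilde\gamma\asymp x_{i-1}$. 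This is precisely the paper's argument.
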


\begin{proof}
	We will distinguish two cases: $a_i=\infty$ or $a_{i} \in \C$.
	
	First, assume that $a_i=\infty$. In that case, $\infty$ is an asymptotic value for $f$ and by assumption it remains an asymptotic value for $f_t$, so that $\phi_t(\infty)=\infty$. Moreover, note that $x_i(t)=f_t(x_{i-1}(t)) = \phi_t \circ f \circ \psi_t^{-1} \circ x_{i-1}(t)$, 
	and that $\psi_t^{-1}(x_{i-1}(t))$ is a curve that tends to $\infty$.
	Therefore we can apply Lemma \ref{lem:ELLemma5} with $h_t:=\phi_t$ (since, again, $\phi_t(\infty)=\infty$) 
	and $\gamma(t):=\psi_t^{-1}(x_{i-1}(t))$. We obtain in this way a curve $\tilde \gamma$ such that 
	$\tilde \gamma(t) \to \infty$,  $\phi_t \circ f \circ \psi_t^{-1}(x_{i-1}(t))=x_{i}(t) = f(\tilde \gamma(t))$,
	and $\tilde \gamma(t) \asymp \psi_t^{-1} \circ x_{i-1}(t)$. By Lemma \ref{qcasymp} we have
	$x_{i-1}(t) \asymp \psi_t^{-1}(x_{i-1}(t))$ (since $\psi_t^{-1}(\infty)=\infty$).
	So $\tilde \gamma(t) \asymp x_{i-1}(t)$.
	
	Moreover, we have $f(\tilde \gamma) = x_i \asymp \gamma_i$ by assumption. Let $\gamma_{i-1}$ be a lift of $\gamma_i$ by $f$:
	then $$f\circ \gamma_{i-1} = \gamma_i \asymp x_i = f \circ \tilde \gamma_i,$$
	so that by Lemma \ref{lem:f-1preserves} we have $\tilde \gamma \asymp \gamma_{i-1}$.
	Finally, we have:
	$$\gamma_{i-1}\asymp \tilde \gamma \asymp x_{i-1},$$
	and we are done in this case.
	
	\medskip

	We now treat the case when $a_{i} \in \C$. In that case, we apply Lemma \ref{lem:ELlemmod} with $\gamma:=x_{i-1}$ and get 
	a curve $\tilde \gamma$ such that $\tilde \gamma \asymp x_{i-1}$ and $f\circ \tilde \gamma - a_i= f_t \circ x_{i-1} - y_i= x_i - y_i$.
	
	Let $\gamma_{i-1}$ be a lift by $f$ of $\gamma_i$, such that $\gamma_{i-1}(t) \to \infty$.
	It remains to argue as above that 
	$\tilde \gamma\asymp \gamma_{i-1}$.  But this follows precisely
	from the same Lemma \ref{lem:f-1preserves} applied to $g:=\frac{1}{f-a_i}$ instead of $f$, since by assumption $\gamma_i - a_i \asymp x_i - y_i$ and therefore
	$$f \circ \tilde \gamma - a_i = x_i - y_i  \asymp f \circ \gamma_{i-1} - a_i.$$
	Then finally we also have 
	\begin{equation}
		x_{i-1} \asymp \tilde \gamma \asymp \gamma_{i-1},
	\end{equation}
	and the lemma is proved.
\end{proof}

We are now finally ready to prove the key Lemma \ref{lem:gkey},
which will conclude the proof of Theorem A.

\begin{proof}[Proof of Lemma \ref{lem:gkey}]
	
	We define $\gamma_n(t):=x_n(t)$, and then proceed by induction to construct curves $\gamma_i$ such that 
	$\gamma_i(t) \to a_i$, $f^{n-i}(\gamma_{i}(t))=\gamma_n(t)$, and:
	\begin{itemize}
		\item if $a_i \neq \infty$, then  $\gamma_i - a_i \asymp x_i - y_i$ 
		\item if $a_i=\infty$, then $\gamma_i \asymp x_i$.
	\end{itemize}

	Assume $\gamma_i$ is constructed. We then have two cases: either $a_{i-1}=\infty$ or not.
	If $a_{i-1} \neq \infty$, then we apply Lemma \ref{lem:univpullback}.
	Otherwise, we apply Lemma \ref{lem:tractpb}. In either case, the induction is proved.
\end{proof}

%
\section{Existence of attracting cycle exiting the domain at virtual cycle parameters. Proof of Theorem B.}\label{sect:Accessibility}

The goal in this section is to prove Theorem B, the Accessibility Theorem. We start with  a  lemma  which was kindly pointed out to us by Lasse Rempe. 

\begin{lem}\label{lem:lasse} Let $T$ be a simply connected hyperbolic  domain, $\rho_T$ be the hyperbolic density in $T$, and let $z,w\in T$. Then 
\begin{equation}\label{eq:lasse}
\dist_T(z,w)\geq\frac{1}{2}\left|\ln\frac{\dist(w,\partial T)}{\dist(z,\partial T)}\right|.
\end{equation}
\end{lem}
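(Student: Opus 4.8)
The plan is to reduce to the classical formula for the hyperbolic metric on the unit disc (or, more conveniently, on a half-plane) via the Riemann map, and then estimate the hyperbolic distance between two points by the hyperbolic length of a radial segment, using the Koebe $1/4$-theorem to compare Euclidean distance to the boundary with the derivative of the Riemann map.

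First I would normalize: let $\varphi:\D\to T$ be a Riemann map, and by precomposing with an automorphism of $\D$ assume $\varphi(0)=z$. Write $w=\varphi(\zeta_0)$ for some $\zeta_0\in\D$. The hyperbolic distance $\dist_T(z,w)=\dist_\D(0,\zeta_0)=\log\frac{1+|\zeta_0|}{1-|\zeta_0|}\geq \log\frac{1}{1-|\zeta_0|}$, and also $\dist_\D(0,\zeta_0)\geq |\zeta_0|$. The key geometric input is the Koebe distortion estimate: for a univalent $\varphi$ on $\D$, $\frac14|\varphi'(\zeta)|(1-|\zeta|^2)\leq \dist(\varphi(\zeta),\partial T)\leq |\varphi'(\zeta)|(1-|\zeta|^2)$. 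Applying this at $\zeta=0$ gives $\dist(z,\partial T)\asymp|\varphi'(0)|$, and at $\zeta=\zeta_0$ gives $\dist(w,\partial T)\asymp|\varphi'(\zeta_0)|(1-|\zeta_0|^2)$.

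Next I would bound the ratio $|\varphi'(\zeta_0)|/|\varphi'(0)|$ in terms of $\dist_\D(0,\zeta_0)$. This is exactly the statement that $\log|\varphi'|$ is Lipschitz with respect to the hyperbolic metric with an explicit constant: from the Koebe estimates $\bigl|\log|\varphi'(\zeta)|\bigr|$ has hyperbolic gradient bounded (the classical bound $\bigl|\frac{\varphi''}{\varphi'}(\zeta)\bigr|(1-|\zeta|^2)\leq$ const follows from univalence), so integrating along the hyperbolic geodesic from $0$ to $\zeta_0$ yields $\bigl|\log\frac{|\varphi'(\zeta_0)|}{|\varphi'(0)|}\bigr|\leq C\,\dist_\D(0,\zeta_0)$. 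Combining this with the two-sided Koebe estimates for $\dist(z,\partial T)$ and $\dist(w,\partial T)$, and absorbing the factor $(1-|\zeta_0|^2)$ (which only helps the inequality in the right direction since it contributes a further $\dist_\D$-comparable term), one obtains $\bigl|\log\frac{\dist(w,\partial T)}{\dist(z,\partial T)}\bigr|\leq 2\,\dist_T(z,w)$, which is \eqref{eq:lasse}.

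The main obstacle is getting the constant to be exactly $\tfrac12$ (equivalently the factor $2$ on the right): the crude argument above produces some constant $C$, and one must be careful to use the sharp form of the distortion theorem. The clean way to nail the constant is to work directly in the half-plane model $\H$ (where $\rho_\H(z)=\tfrac{1}{2\,\mathrm{Re}\,z}$ for the right half-plane, so $\dist(z,\partial\H)=\mathrm{Re}\,z$): if $\varphi:\H\to T$ is conformal, then by the Schwarz lemma applied to $\varphi$ and to local branches of $\varphi^{-1}$, the map $z\mapsto \log\dist(\varphi(z),\partial T)$ is a harmonic-type function whose hyperbolic gradient is bounded by the constant coming from comparing $\rho_T\circ\varphi\cdot|\varphi'|$ with $\rho_\H$; tracking this through gives precisely the factor $2$. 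I would present the half-plane computation as the cleanest route, and remark that the lemma is a standard consequence of the Koebe theorem.
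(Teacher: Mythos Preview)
Your approach via the Riemann map and Koebe distortion is sound in spirit but substantially more complicated than needed, and as you yourself note, it does not cleanly deliver the constant $\tfrac12$. The two-sided Koebe estimate $\tfrac14|\varphi'(\zeta)|(1-|\zeta|^2)\leq \dist(\varphi(\zeta),\partial T)\leq |\varphi'(\zeta)|(1-|\zeta|^2)$ introduces a multiplicative factor of $4$ in the ratio of boundary distances, which becomes an \emph{additive} $\log 4$ after taking logarithms; your route therefore only yields $\dist_T(z,w)\geq \tfrac12\bigl|\log\frac{\dist(w,\partial T)}{\dist(z,\partial T)}\bigr|-\tfrac12\log 4$. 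Your suggestion to pass to the half-plane model does not help: in $\H$ the inequality is trivially sharp (indeed with constant $1$ instead of $\tfrac12$), but the transfer to a general $T$ via the Riemann map is exactly where the factor of $4$ enters, since Euclidean distance to the boundary is not conformally natural.

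The paper's proof avoids the Riemann map entirely and is a two-line computation. Parametrize any curve $\gamma:[0,t_\gamma]\to T$ from $z$ to $w$ by Euclidean arclength; the triangle inequality gives $\dist(\gamma(t),\partial T)\leq \dist(z,\partial T)+t$. Combining this with the standard lower bound $\rho_T(\zeta)\geq \tfrac{1}{2\dist(\zeta,\partial T)}$ (itself a consequence of Koebe $\tfrac14$) and integrating,
\[
\ell_T(\gamma)\;\geq\;\int_0^{t_\gamma}\frac{dt}{2(\dist(z,\partial T)+t)}\;=\;\tfrac12\log\frac{\dist(z,\partial T)+t_\gamma}{\dist(z,\partial T)}\;\geq\;\tfrac12\log\frac{\dist(w,\partial T)}{\dist(z,\partial T)},
\]
the last step using $\dist(w,\partial T)\leq \dist(z,\partial T)+t_\gamma$. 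Taking the infimum over curves and symmetrizing in $z,w$ gives the lemma with the sharp constant. The moral is that the only input needed from Koebe is the pointwise density bound $\rho_T\geq \tfrac{1}{2\dist(\cdot,\partial T)}$; once you have that, integrate directly rather than detouring through distortion estimates on the uniformizing map.
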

\begin{proof} Let $\Gamma$ be the collection of all curves $\gamma:[0,t_\gamma] \ra T$, parametrized in arclength, for which $\gamma(0)=z$ and $\gamma(t_\gamma)=w$. Notice that by the triangular inequality, given such a curve $\gamma$ parametrized by a parameter $t$, for any point $\gamma(t)\in\gamma$ we have 
\begin{align}\label{eq:triangular1}
\dist(\gamma(t), \partial T)&\leq \dist(z, \partial T)+t \quad \text{and in particular}\\\label{eq:triangular2}
\dist(w, \partial T)&\leq \dist(z, \partial T)+t_\gamma.
\end{align}  
For  a curve $\gamma\in\Gamma$ denote by $\ell_T(\gamma)$ its hyperbolic length in $T$. Then 
$\dist_T(z,w)=\inf_{\gamma\in\Gamma}\ell_T(\gamma)$. 
On the other hand, consider any such $\gamma$. Using  standard estimates for the hyperbolic metric (see \cite{BeardonMinda}) as well as (\ref{eq:triangular1})and (\ref{eq:triangular2})) we obtain
\begin{align*}
\ell_T(\gamma)&=\int_0^{t_\gamma}\rho_T(\gamma(t))dt\geq\int_0^{t_\gamma}\frac{1}{2\dist(\gamma(t), \partial T)}  dt\geq\\
& \int_0^{t_\gamma}\frac{1}{2\dist(z, \partial T)+t}  dt=\frac{1}{2}\ln\frac{\dist(z, \partial T)+t_\gamma}{\dist(z, \partial T)}\geq \frac{1}{2}\ln\frac{\dist(w, \partial T)}{\dist(z, \partial T)}. 
\end{align*}
This proves the claim.
\end{proof}
Using Lemma \ref{lem:lasse} and standard hyperbolic estimates, one can show that the Riemann map from the left half plane to a tract, restricted to the negative real axis, cannot contract too much when approaching infinity, no matter what the geometry of the tracts is.  In other words, the central curve inside a tract parametrized by the negative half line cannot converge to infinity too slowly.

\begin{lem}[Asymptotic derivative of the Riemann map]\label{lem:nocusp} Let $\H$ be the left half plane,  $T$ be a simply connected hyperbolic  domain, $g:\H\ra T$ be a Riemann map. Then for every $\alpha>0$, 
\begin{equation}\label{eq:nocusp}
\lim_{t \to +\infty} |g '(-t)| e^{\alpha t} =\infty. 
\end{equation}
\end{lem}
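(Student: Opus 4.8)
The plan is to translate the statement into hyperbolic geometry and then invoke Lemma~\ref{lem:lasse}. Since $g:\H\to T$ is a conformal isomorphism, it is an isometry for the hyperbolic metrics, so $|g'(z)|=\rho_\H(z)/\rho_T(g(z))$, where $\rho_\H$ and $\rho_T$ denote the hyperbolic densities. In the normalization for which $\rho_D\geq \frac{1}{2\dist(\cdot,\partial D)}$ on simply connected domains $D$ (the one already used in Lemma~\ref{lem:lasse}), one has $\rho_\H(-t)=1/t$ for $t>0$, while applying the Schwarz lemma to the inclusion into $T$ of the largest round disk centred at $w$ gives the matching upper bound $\rho_T(w)\leq 2/\dist(w,\partial T)$. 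First I would combine these to obtain, for all $t>0$,
\[
|g'(-t)|=\frac{\rho_\H(-t)}{\rho_T(g(-t))}\geq \frac{\dist(g(-t),\partial T)}{2t}.
\]

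Next I would bound $\dist(g(-t),\partial T)$ from below, which is the step that actually uses Lemma~\ref{lem:lasse}. The negative real axis is a hyperbolic geodesic of $\H$, so $\dist_\H(-1,-t)=\int_1^t \frac{dx}{x}=\ln t$; since $g$ is a hyperbolic isometry, $\dist_T(g(-1),g(-t))=\ln t$ as well. Feeding the points $g(-1),g(-t)\in T$ into Lemma~\ref{lem:lasse} gives
\[
\ln t\;\geq\;\frac12\left|\ln\frac{\dist(g(-t),\partial T)}{\dist(g(-1),\partial T)}\right|\;\geq\;\frac12\ln\frac{\dist(g(-1),\partial T)}{\dist(g(-t),\partial T)},
\]
and hence $\dist(g(-t),\partial T)\geq \dist(g(-1),\partial T)\, t^{-2}$ (trivially so if $\dist(g(-t),\partial T)\geq\dist(g(-1),\partial T)$).

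Combining the two displays yields $|g'(-t)|\geq \frac12\,\dist(g(-1),\partial T)\,t^{-3}$, i.e. $|g'(-t)|$ can decay at most polynomially fast in $t$; therefore $|g'(-t)|e^{\alpha t}\geq \frac12\,\dist(g(-1),\partial T)\,t^{-3}e^{\alpha t}\to +\infty$ for every $\alpha>0$, since an exponential dominates any power. I do not expect a genuine obstacle here: once Lemma~\ref{lem:lasse} is in hand the argument is routine, the only points needing (standard) care being to fix a single normalization of the hyperbolic metric throughout — this only affects the exponent $3$, never the conclusion — and to record that the negative real axis realizes $\dist_\H(-1,-t)$.
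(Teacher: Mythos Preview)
Your proof is correct and follows essentially the same approach as the paper: both use that $g$ is a hyperbolic isometry together with $\rho_\H(-t)=1/t$ and the standard upper bound $\rho_T\leq 2/\dist(\cdot,\partial T)$ to get $|g'(-t)|\geq \dist(g(-t),\partial T)/(2t)$, then invoke Lemma~\ref{lem:lasse} to bound $\dist(g(-t),\partial T)$ from below polynomially, arriving at $|g'(-t)|\geq C/t^3$. Your presentation is in fact slightly more streamlined, since you absorb the paper's case split (on whether $\dist(g(-t),\partial T)$ exceeds $\dist(g(-1),\partial T)$) into a one-line parenthetical.
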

\begin{proof} Let $C=\dist_T(g(-1),\partial T)$. Then (\ref{eq:lasse}) gives 
\begin{equation}\label{eq:lassespecial}
\ln t=\dist_\H(-1,-t)=\dist_T(g(-1),g(-t))\geq\frac{1}{2}\left|\ln\frac{C}{\dist(g(-t),\partial T})\right|.
\end{equation}
By definition of hyperbolic density, 
$$
\rho_T(g(-t))=\frac{1}{|g'(-t)|}\rho_\H(-t)=\frac{1}{t|g'(-t)|}, 
$$
and by the standard estimates on the hyperbolic density on simply connected sets (see e.g. \cite{BeardonMinda}), 
\begin{equation}\label{eq:estimatesgprime}
\rho_T(g(-t))=\frac{1}{t|g'(-t)|} \leq \frac{2}{\dist(g(-t), \partial T)}
\end{equation}
 Suppose first that  for a given $t$, $C\leq \dist(g(-t),\partial T)$; then using (\ref{eq:estimatesgprime}), for every $\beta>0$ and whenever $t$ is sufficiently large (depending only on $C$) we obtain that 
 $$
|g'(-t)|\geq \frac{\dist(g(-t),\partial T)}{2t}\geq  \frac{C}{2t} \geq e^{-\beta t}.
 $$
Otherwise, for any  $t$ such that  $C\geq \dist(g(-t),\partial T)$  we can remove the modulus in the right hand side of  (\ref{eq:lassespecial}) to obtain
 \begin{align*}
 2\ln t&\geq\ln\frac{C}{\dist(g(-t),\partial T)}\\
 t^2&\geq\frac{C}{\dist(g(-t),\partial T)}  \geq \frac{C}{2t|g'(-t)|}.\\
 \end{align*}
 Hence  for every $\beta>0$ and every $t$ sufficiently large depending only on $C$
 $$
 |g'(-t)|\geq \frac{C}{2t^3}\geq e^{-\beta t}.
 $$
\end{proof}
%
%

%


\begin{proof}[Proof of Theorem B]
To simplify the notations, set  $f:=f_{\lambda_0}$. 
	 Let $v=v_{\la_0}$ be an  asymptotic value such that $f^n(v)=\infty$. Recall that $f_\lam=\phi_\lam \circ f \circ \psi_\lam^{-1}$. Let $V:=\D^*(v, r)$ be a punctured disk centered at $v$ disjoint from $S(f)$, and $T$ a tract, so that
	$f: T\to V$ is a universal cover. 
	Let $\Phi: T \to \H$ be a conformal isomorphism, where 
	$\H$ is the left half plane.
	In particular, $f(z)=v+r e^{\Phi(z)}$ for all $z \in T$. See Figure \ref{accessibility1}.
	
	Let $V_\lam:=\phi_\lam(V)$ and $T_\lam:=\psi_\lam(T)$, so that 
	$f_\lam:T_\lam\ra V_\lam$ is also  an infinite degree universal cover,
	 and let $\Phi_\lam:=\Phi\circ \psi_\lam^{-1} : T_\lam \to \H$.
	Then $\phi_\lam^{-1} \circ f_\lam : T_\lam \to V$ is a universal cover, and so
	for all $z \in T_\lam$, 
	\begin{equation}\label{eq:expf}
		f_\lam(z)=\phi_\lam\left(v+ r e^{\Phi_\lam(z)}\right)
	\end{equation}

	Now, we wish to find a curve $t \mapsto \lam(t)$ in parameter space such that 
	\begin{equation} \label{eq2}
		\Phi_{\lam(t)} \circ f_{\lam(t)}^{n}(v_{\lam(t)}) = -t.
	\end{equation}

	We use the same notations as in the proof of Proposition \ref{pseudomontel}: we let $G(\lam):=\psi_\lam^{-1} \circ f_\lam^{n}(v_\lam)$  and recall that $G(\la_0)=\infty$. Given the definition of $\Phi_\lam$, Equation (\ref{eq2}) is equivalent to 
	\begin{equation}\label{eq:curve}
	\Phi\circ \psi_{\lam(t)}^{-1} \circ f_{\lam(t)}^{n} (v_{\lam(t)}) = -t,
	\end{equation}
	or
	\begin{equation}
		G(\lam(t)) = \Phi^{-1}(-t).
	\end{equation}
	Note that $t \mapsto \Phi^{-1}(-t)$ 	is a curve such that $\lim_{t \to +\infty}  \Phi^{-1}(-t)=\infty$. By Remark \ref{rem:Gbranched}, the map $G$ is locally a branched cover over a neighborhood of $\infty$, and so we can find the desired curve $t \mapsto \lam(t)$ (defined for $t$ large enough, and possibly not unique). See Figure \ref{accessibility1}.
	
\begin{figure}[hbt!]
\centering
\setlength{\unitlength}{0.8\textwidth}
\includegraphics[width=0.8\textwidth]{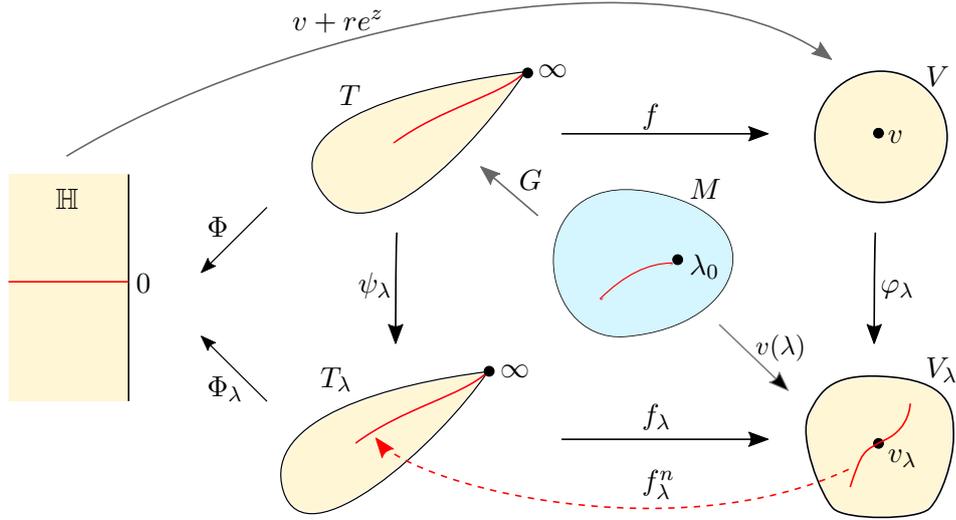}
\put(-0.33,0.42){$f$}
\put(-0.33,0.1){$f_\la$}
\put(-0.33,0.03){$f_\la^n$}
\put(-0.65,0.44){$T$}
\put(-0.67,0.14){$T_\la$}
\put(-0.63,0.24){$\psi_\la$}
\put(-0.078,0.24){$\phi_\la$}
\put(-0.79,0.3){$\Phi$}
\put(-0.79,0.13){$\Phi_\la$}
\put(-0.44,0.47){$\infty$}
\put(-0.48,0.15){$\infty$}
\put(-0.07,0.4){$v$}
\put(-0.07,0.06){$v_\la$}
\put(-0.03,0.46){$V$}
\put(-0.03,0.15){$V_\la$}
\put(-0.28,0.34){$M$}
\put(-0.28,0.26){$\la_0$}
\put(-0.7,0.52){$v+r e^{z}$}
\put(-0.865,0.24){$0$}
\put(-0.95,0.33){$\H$}
\put(-0.46,0.35){$G$}
\put(-0.21,0.175){\small$v(\la)$}
\caption{\label{accessibility1} Setup of the Proof of Theorem B.}
\end{figure}
	
	Now let $D_t:=\Phi_{\lam(t)}^{-1}(\D(-t,\pi))\subset T_{\lam(t)}$ and let $U_t$ denote the connected component of 
	$f_{\lam(t)}^{-n}(D_t)$ containing $v_{\lam(t)}$. We will prove that for all $t$ large enough, 	$f_{\lam(t)}(D_t) \Subset U_t$, or equivalently, 
	$f_{\lam(t)}^{n+1}(U_t) \Subset U_t$; this implies the existence of an attracting fixed point for $f_{\lam(t)}^n$. 
	
	First, let us show that for $r$ small and $t$ large, $f_{\lam(t)}(D_t)$ is contained in a small disk centered at $v_{\lam(t)}$, or more precisely,   
	\begin{equation}\label{eq:controldt}
		f_{\lam(t)}(D_t) \subset \D\left(v_{\lam(t)}, e^{-t(1-\epsilon)})\right).
	\end{equation}
		By (\ref{eq:expf}) we have that for all $z \in \H$.
	 \[
	 f_\lam \circ \Phi_\lam^{-1}(z)=\phi_\lam(v +r e^z).
	 \]
Since $\D(-t,\pi)\subset\{z\in\C: \Re z< -t+\pi\}$ we have that 
	\[
	f_{\lam(t)}(D_t) \subset \phi_{\lam(t)}(\D(v, r e^{-t+\pi}))
	\]
	 Let $\epsilon>0$. By Lemma \ref{lem:distortion}, we have for all $t$ large enough:
	\begin{equation}
		f_{\lam(t)}(D_t) \subset \D\left(v_{\lam(t)}, (r e^\pi)^{1-\epsilon}e^{-t(1-\epsilon)})\right),
	\end{equation}
which for $r$ small implies (\ref{eq:controldt}). See Figure \ref{accessibility2}.
	
	Now we show that $U_t$ contains a disk centered at $v_\lam(t)$ whose radius, for $t$  large,  is much larger than $e^{-t(1-\epsilon)}$.
	
	
	Let us first estimate $\dist(f_{\lam(t)}^{n}(v_{\lam(t)}), \partial D_t)$. 
	To lighten the notations, let $g:=\Phi^{-1}$; then $g$ is univalent on $\H$
	and $D_t=\psi_{\lam(t)} \circ g(\D(-t,\pi))$. By Koebe's theorem, 
	$g(\D(-t,\pi))$ contains a disk 
	\[
	\D(g(-t),C |g'(-t)|).
	\]
	Then, by Lemma \ref{lem:distortion} and (\ref{eq:curve})
	\[
	D_t=\psi_{\lam(t)}\circ g(\D(-t,\pi))\supset	\D(\psi_{\lam(t)} \circ g(-t),  C^{1+\epsilon}|g'(-t)|^{1+\epsilon})
	=	\D( f_{\lam(t)}^{n}(v_{\lam(t)}), C^{ 1+\epsilon}|g'(-t)|^{1+\epsilon}).
	\]

	Now note that as $t \to +\infty$, $D_t$ is arbitrarily close to $\infty$. In particular, we may assume that for all $t$ large enough
	$D_t \cap S(f_{\lam(t)}^{n})=\emptyset$; hence  since $D_t$ is simply connected, we can define an inverse branch $h_t: D_t \to U_t$ of $f_{\lam(t)}^{-n}$. In fact, $h_t$ can be extended to some 
	simply connected neighborhood of $\infty$ independent from $t$, and as $t \to +\infty$ it converges on that domain
	to an inverse branch of $f^{-n-1}$; in particular, its spherical derivative $h_t^\#(f_{\lam_t}^{n}(v_{\lam(t)}))$ is bounded independently from $t$.
	
	Again, Koebe's theorem applied to $h_t:  \D\left(f_{\lam(t)}^{n}(v_{\lam(t)}), C^{ 1+\epsilon} |g'(-t)|^{1+\epsilon} \right) \to U_t$
	implies that there exists a constant $C'>0$ such that
	\begin{equation}\label{eq:controlut}
		\D(v_{\lam(t)}, C' |g'(-t)|^{1+\epsilon}) \subset U_t.
	\end{equation}
	
	See Figure \ref{accessibility2}. Finally, from equations \eqref{eq:controldt} and \eqref{eq:controlut}, 
	it is enough to prove that 
	\begin{equation}\label{eq:ratio of radii}
	 \frac{e^{-t(1-\epsilon)}}{C' |g'(-t)|^{1+\epsilon}} \to 0 \quad \text{as $t \to +\infty$}, 
	\end{equation}
	which follows from Lemma~\ref{lem:nocusp}. 
	This proves that $f_{\lam(t)}^{n}(U_t) \Subset U_t$, and the theorem then follows from Schwartz's lemma. Note that (\ref{eq:ratio of radii}) also implies that  the multiplier   goes to zero
	as $t \to +\infty$,  since the modulus of $U_t\setminus \overline{f_\la^{n+1}(U_t)}$ tends to infinity.
\end{proof}

\begin{figure}[hbt!]
\centering
\setlength{\unitlength}{0.8\textwidth}
\includegraphics[width=0.8\textwidth]{accessibility2}
\put(-0.33,0.19){$f_\la$}
\put(-0.33,0.1){$f_\la^n$}
\put(-0.33,0.03){$h$}
\put(-0.57,0.24){$\psi_\la$}
\put(-0.078,0.24){$\phi_\la$}
\put(-0.75,0.32){$\Phi$}
\put(-0.74,0.27){$g$}
\put(-0.75,0.1){$\Phi_\la$}
\put(-0.09,0.35){$v$}
\put(-0.09,0.085){\scriptsize $v_\la$}
\put(-0.075,0.105){\scriptsize $R_1$}
\put(-0.095,0.04){\scriptsize $R_2$}
\put(-0.03,0.46){$V$}
\put(-0.03,0.15){$U_t$}
\put(-0.59,0.15){$D_t$}
\put(-0.7,0.41){$v+r e^{\Phi}$}
\put(-0.835,0.232){\small $0$}
\put(-0.92,0.36){$\H$}
\caption{\label{accessibility2} Construction of an attracting cycle in the Proof of Theorem B, with $R_1:=e^{-t(1-\epsilon)} \ll C' |g'(-t)|^{1+\epsilon} =:R_2 $.}
\end{figure}


%
%
%

\section{Bifurcation locus} \label{sect:bifloc}

In this section we concern ourselves with the set of  $J$-unstable parameters, the {\em bifurcation locus}. Our goal is to show different characterizations of this set in the spirit of the Ma\~{n}\'e-Sad-Sullivan Theorem, leading to the proof that its complement, the set of $\JJ$-stable parameters,  is open and dense in the parameter space.

To that end, we first show several approximation and density results which will be useful and are interesting on their own.  Since we have not yet formally linked $\JJ$-stability to activity of singular values (Theorem E), we will  first show density of specific types of  parameters in the activity locus, as defined below.

\begin{defi}[Activity locus]Given a holomorphic family $\{f_\lambda\}_{\lambda\in M}$ we define the \emph{activity locus} $\AA$ as the set of parameters in $M$ for which some singular value is active (see Definition~\ref{defn:active singular values}).   The activity locus $\AA(v_{\lambda})$ of a singular value $v_\lambda$ is the set of parameters in $M$ for which the singular value $v_\lambda$ is active. 
\end{defi}

The following Lemma shows that the activity locus of a natural family is a relatively large set in parameter space, a fact which will be useful in the density proofs. 

\begin{lem}\label{lem:perturbA}
	Let $\{f_\lam\}_{\lam \in M}$ be a natural family of finite type meromorphic maps, and let $\mathcal{A}(v_\lam)$
	be the activity locus of a marked singular value $v_\lam$. Then $\mathcal{A}(v_\lam)$ is nowhere locally contained in a proper analytic subset of $M$. More precisely, if $\lam_0 \in \mathcal{A}(v_\lam) \cap H$, where $H \subset M$ is a proper analytic subset, then for every neighborhood $U$ of $\lam_0$ in $M$, $U \cap (\mathcal{A}(v_\lam) \setminus H) \neq \emptyset$.
\end{lem}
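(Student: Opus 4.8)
I want to show that the activity locus $\mathcal{A}(v_\lam)$ cannot be locally swallowed by any proper analytic subset $H$. The idea is to argue by contradiction: suppose $\lam_0 \in \mathcal{A}(v_\lam) \cap H$ and there is a neighborhood $U$ of $\lam_0$ with $U \cap \mathcal{A}(v_\lam) \subset H$. The key dichotomy is whether the passivity of $v_\lam$ on $U \setminus H$ is of the "truncated" type (case (1) in Definition~\ref{defn:active singular values}, i.e. $f_\lam^n(v(\lam)) \equiv \infty$ there) or the "normal family" type (case (2)). Since $U \setminus H$ is connected (as $H$ is a proper analytic subset), and the two passivity conditions are closed conditions in an appropriate sense, exactly one of the two alternatives holds on all of $U \setminus H$.

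\textbf{Case analysis.} First suppose $v_\lam$ is passive of normal-family type on $U \setminus H$: the family $\{\lam \mapsto f_\lam^n(v(\lam))\}_{n \in \N}$ is well-defined and normal on $U \setminus H$. Then I claim normality propagates across $H$: since $H$ has empty interior and the orbit maps, where defined, omit enough values (one can use the fact that poles and prepoles of $f_\lam$ cannot accumulate arbitrarily, or simply that on a slightly smaller neighborhood the iterates are uniformly bounded away from, say, three marked points in a persistent cycle), the limit functions extend holomorphically across $H$ by the removable singularity / Riemann extension theorem for bounded holomorphic functions, and hence $v_\lam$ is passive at $\lam_0$ too — contradicting $\lam_0 \in \mathcal{A}(v_\lam)$. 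The second alternative is that $f_\lam^n(v(\lam)) \equiv \infty$ on $U \setminus H$ for some fixed $n$; but since $U \setminus H$ is dense in $U$ and $\lam \mapsto f_\lam^n(v(\lam))$ is meromorphic on $U$ (wherever the orbit is defined), this forces $f_\lam^n(v(\lam)) \equiv \infty$ on all of $U$, so again $v_\lam$ is passive at $\lam_0$, a contradiction.

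\textbf{The main obstacle.} The delicate point is the first case: making rigorous the claim that normality of the orbit family on $U \setminus H$ forces normality (equivalently, passivity) at points of $H$. The subtlety is that the orbit maps $\lam \mapsto f_\lam^n(v(\lam))$ may themselves fail to be defined at points of $H$ (the orbit may hit a pole), so one cannot naively apply Montel on all of $U$. The fix is to work instead with the reformulation via the shooting setup: if $v_\lam$ were active at $\lam_0$, then by the very definition of activity combined with Proposition~\ref{pseudomontel} (or directly by non-normality), arbitrarily close to $\lam_0$ there are parameters where $v(\lam)$ exhibits prescribed behavior — e.g. $f_\lam^{n+1}(v(\lam))$ equals any of five marked holomorphic functions, which can be chosen to land in a persistent repelling cycle or to be a prepole. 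Since these target parameters $\lam'$ form a set accumulating at $\lam_0$, and at such $\lam'$ the singular value is active (it is, for instance, eventually mapped to $\infty$ non-persistently, or lands in a configuration detectable as active), one produces active parameters arbitrarily close to $\lam_0$ but outside $H$: indeed $H$ being a proper analytic set, the shooting argument's target equations $f_\lam^{n+1}(v(\lam)) = \gamma_i(\lam)$ have solutions not confined to $H$, because otherwise $H$ would contain a neighborhood-dense subset of solutions, forcing an identity $f_\lam^{n+1}(v(\lam)) \equiv \gamma_i(\lam)$ on a component of $M$, which can be excluded by choosing the $\gamma_i$ generically (e.g. constants avoiding persistent relations). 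This yields the desired point of $U \cap (\mathcal{A}(v_\lam) \setminus H)$ and completes the proof.
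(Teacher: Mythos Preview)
Your approach is genuinely different from the paper's, and as written it has a real gap.

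\textbf{The gap.} The crucial step is your claim that normality of the orbit family $\{g_n(\lam)=f_\lam^n(v(\lam))\}$ on $U\setminus H$ forces passivity at $\lam_0$. This is false in general, even when all the $g_n$ are holomorphic on $U$. Take $U=\D$, $H=\{0\}$, and $g_n(\lam)=n\lam$: this family is holomorphic on $\D$, normal on $\D^*$ (it converges locally uniformly to $\infty$ there), but is \emph{not} normal on $\D$, since $g_n(0)=0$ for all $n$. Nothing in your argument rules out this kind of behaviour for the actual orbit maps. You recognise this as ``the main obstacle'', but your proposed fix via the Shooting Lemma does not close it: Proposition~\ref{pseudomontel} applies only when $\lam_0$ is a truncated parameter, not when activity at $\lam_0$ comes purely from non-normality of a fully defined orbit; and your claim that the shot parameters cannot all lie in $H$ (``otherwise $H$ would contain a neighborhood-dense subset of solutions, forcing an identity'') is not justified --- the solutions of each equation $f_\lam^{n+1}(v(\lam))=\gamma_i(\lam)$ form a discrete set, not a dense one, and a finite set of such solutions near $\lam_0$ could well sit inside $H$.

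\textbf{How the paper does it.} The paper takes a completely different route, via a \emph{phase-to-parameter transfer} argument. It uses that $f_{\lam_0}$ has a compact invariant hyperbolic set $X$ of Hausdorff dimension strictly greater than one (citing \cite{bkz}), which moves holomorphically as $X_\lam$. Activity at $\lam_0$ plus Montel produces a nearby $\lam_1$ with $f_{\lam_1}^n(v_{\lam_1})\in X_{\lam_1}$. Restricting to a generic one-dimensional disk through $\lam_1$ (not contained in $H$), a lemma of Shishikura shows that the set of parameters for which the orbit lands in the motion of $X$ has Hausdorff dimension at least $\dim_H X>0$; all such parameters are active, so $\mathcal A(v_\lam)$ has positive dimension on this disk and cannot be reduced to its intersection with $H$. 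This dimension-transfer mechanism sidesteps entirely the question of whether passivity extends across analytic sets.
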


\begin{proof}
	Let $\lam_0 \in \mathcal{A}(v_\lam)$, $H$ an analytic hypersurface of $M$ containing $\lam_0$, and $U$ be a small polydisk centered at $\lam_0$ in $M$. Assume for a contradiction that 
	$\mathcal{A}(v_\lam) \cap U \subset H \cap U$. Let $h_n(\lam):=f_\lam^n(v_\lam)$, wherever this expression is well-defined. Let $z_{\lam_0}$ be a repelling periodic point of period at least 3 for $f_{\lam_0}$ such that $z_{\lam_0} \notin S(f_{\lam_0})$, and let $z_\lam$ be the corresponding repelling periodic point for $f_\lam$ given by the Implicit Function Theorem. Up to reducing $U$, we may assume that $\lam \mapsto z_\lam$ is 
	defined over $U$.

	Since $\acal(v_\lam) \neq \emptyset$, there is no $N \in \N$ such that $h_N \equiv \infty$ on $U$;
	it follows from the definition of the activity locus and our assumptions that if $\lam \in U$ and $n \in \N$ are such that $h_n(\lam)=\infty$, then $\lam \in H$.

	We now distinguish two cases: 
	\begin{enumerate}
		\item either there exists $n_0 \in \N$ and $\lam_1 \in U$ such that $h_{n_0}(\lam_1) = \infty$;
		\item or for every $n \in \N$, $h_n$ is well-defined over $U$ and $h_n(U) \subset \C$.
	\end{enumerate}

	Let us first treat case (1). Let $D$ be a holomorphic disk passing through $\lam_1$ and not contained in $H$. Then by the choice of $\lam_1$ and  our previous observation, $h_{n_0}(\lam_1)=\infty$ and $h_{n_0}(\lam) \neq \infty$ for every $\lam \in D \setminus \{\lam_1\}$. By the Shooting Lemma (Proposition \ref{pseudomontel}) applied with $\gamma(\lam):=z_\lam$, we find some $\lam_2 \in D \setminus \{\lam_1\}$ such that 
	$h_{n_0+1}(\lam_2)=z_{\lam_2}$, in other words, $v_{\lam_2}$ is Misiurewicz. Therefore $\lam_2 \in \acal(v_{\lam_2})$, but $\lam_2 \notin H$, a contradiction.

	Case 2 follows from a similar but more classical application of Montel's theorem.
\end{proof}

\subsection{Proof of Theorem D and other density theorems} \label{sec:density}

	For the purpose of this section it is convenient to introduce some additional notation.
	
	\begin{defn}[Truncated parameter]
		We will say that a parameter $\la_0\in M$ is a {\em truncated parameter} if there exists a singular value $v(\la_0)$ such that $f_{\la_0}^n(v(\la_0))=\infty$ for some $n\geq 0$, but this relation does not hold for all $\la$ in a neighborhood of $\la_0$.   The number $n$ is called the {\em order} of the truncated parameter.
	\end{defn}
	If $v(\la_0)$ is an asymptotic value, then $\la_0$ was already named a \emph{virtual cycle parameter}.
	If $v(\lam_0)$ is a critical value, then this means that some critical point for $f_{\lam_0}$ is eventually mapped to a pole; we will say in this case that $\lam_0$ is a \emph{critical prepole parameter}. In particular, truncated parameters are virtual cycle parameters, critical prepole parameters, or both.

	In a similar vein, if $f_{\lam_0}$ has a cycle of multiplier that is a root of unity  we will refer to $\lam_0$ as a \emph{parabolic} parameter; if the multiplier does not remain a root of unity under suitable perturbation of $\lam_0$, then we call it \emph{nonpersistent}.

The density of Misiurewicz parameters in the boundary of the Mandelbrot set, or the density of other dynamically  (and algebraically) defined parameters  are well known consequences of Montel's theorem, as is the accumulation of centers of hyperbolic components on every boundary point. In our setting of natural families of meromorphic maps, we will show that similar results still hold, although the arguments are  more involved and require the use of Theorems A and B. As an example, density of  parabolic parameters (Corollary \ref{cor:density parabolic parameters}) is not immediate, but rather  follows from density of truncated parameters (Proposition \ref{prop:density_of_sp}) and approximation of such parameters by parabolic parameters (Theorem D) and by parameters with superattracting cycles (Proposition~\ref{prop:acc_centers}).

Before we can prove the useful Proposition \ref{prop:density_of_sp}, we first need to deal with the exceptional case of maps without non-omitted poles. The following proposition shows that when the family has no non-omitted poles then everything works as in the rational or entire case. 

\begin{prop}\label{prop:excep}
	Let $\{f_\lam\}_{\lam \in M}$ be a natural family of finite type meromorphic maps.
	\begin{enumerate}[\rm (a)]
	\item If  for all $\lam \in M$, $f_\lam$ does not have any non-omitted pole, then cycles never disappear to $\infty$ in the family $\{f_\lam\}_{\lam \in M}$.
	\item If there exists $\lam_0 \in M$  such that $f_{\lam_0}$ has at least one non-omitted pole, then for all $\lam \in 	M$ outside some proper analytic subset of $M$,  $f_\lam$ has at least one non-omitted pole.
        \end{enumerate}
\end{prop}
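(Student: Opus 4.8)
Both parts of Proposition~\ref{prop:excep} are statements about how the pole structure of $f_\lam$ varies with $\lam$, so the natural starting point is the same observation used in the proof of Lemma~\ref{lem:infinitypassive}: writing $f_\lam=\phi_\lam\circ f\circ\psi_\lam^{-1}$ with $f=f_{\lam_0}$, the poles of $f_\lam$ are the $\psi_\lam$-images of $f^{-1}(\{\phi_\lam^{-1}(\infty)\})$. So for \textbf{(a)}, assume $\lam_0$ is chosen with $f=f_{\lam_0}$ having no non-omitted poles; this means $\infty$ is an omitted value of $f$, i.e.\ $f:\C\to\C$ is entire. I would like to conclude that $f_\lam$ is then ``conjugate to an entire map'' in a way that lets me invoke Theorem A. The cleanest route: if a cycle of period $n$ disappeared to infinity at some $\lam_1\in M$, Theorem A (the Activity Theorem) would produce a virtual cycle for $f_{\lam_1}$ containing an active asymptotic value or active critical point. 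But by Lemma~\ref{lem:limcycle}, a virtual cycle for a map with \emph{no poles at all} must consist entirely of $\infty$'s (the only finite accumulation points of the exiting curve would have to be prepoles, of which there are none), so it contains no critical points; hence $\infty$ itself must be an active asymptotic value of $f_{\lam_1}$. The point is then to rule this out: I need to show that if one member $f_{\lam_0}$ of a natural family is entire (omits $\infty$), then \emph{every} $f_\lam$ omits $\infty$, so that $\infty$ is a persistent — hence passive — asymptotic value throughout $M$, contradicting activity.

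That last implication is exactly the kind of argument already carried out in Lemma~\ref{lem:infinitypassive}, and I would essentially reuse it: $\phi_\lam^{-1}(\infty)$ is a Picard exceptional value of $f$ (since $f^{-1}(\{\phi_\lam^{-1}(\infty)\})$ would have to map onto the poles of $f_\lam$ near $\lam_0$, but $f_{\lam_0}$ has none, forcing the preimage set to be finite, and being finite for the entire function $f$ makes it either empty or — if nonempty and finite — still exceptional), and by Picard's theorem together with connectedness of $M$ the map $\lam\mapsto\phi_\lam^{-1}(\infty)$ is constant equal to $x:=\infty$ (since it equals $\infty$ at $\lam_0$ because $f$ omits $\infty$). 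Hence $f_\lam^{-1}(\infty)=\psi_\lam(f^{-1}(\{\infty\}))=\emptyset$ for all $\lam$, so each $f_\lam$ is entire, $\infty$ is an omitted value, and it is therefore a passive asymptotic value on all of $M$. This contradicts the conclusion from Theorem A, proving (a).

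For \textbf{(b)}, I again use $f_\lam^{-1}(\{\text{pole}\})\leftrightarrow f^{-1}(\{\phi_\lam^{-1}(\infty)\})$, now with $f=f_{\lam_0}$ having at least one non-omitted pole, so $\infty\in f(\C)$ and in fact $\infty$ is taken infinitely often (a finite-type entire-valued... no: $\infty$ non-omitted and not a finite-type exceptional value for the finite type map $f$ means $f^{-1}(\infty)$ is infinite, by the big Picard-type dichotomy for finite type maps, or simply because a transcendental meromorphic map omitting no value near $\infty$ in a tract... I would phrase it as: $f$ has a non-omitted pole and, being transcendental, $f^{-1}(\infty)$ is then infinite). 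Consider $g(\lam):=\phi_\lam^{-1}(\infty)$, a continuous map $M\to\hat\C$, holomorphic after composing appropriately — more precisely, by Lemma~\ref{lem:Gopendiscrete} applied with the holomorphic motion $\phi_\lam$ and the constant holomorphic map $\infty$, $\lam\mapsto\phi_\lam^{-1}(\infty)$ is locally either constant or open-and-discrete, hence (by Stoilow / Remark~\ref{rem:Gbranched}) a finite branched cover locally, and in particular its image is open unless it is locally constant. The set of $\lam$ for which $f_\lam$ has \emph{only} omitted poles — equivalently $\phi_\lam^{-1}(\infty)$ is an omitted value of $f$, of which there are at most two (Picard) — is contained in $g^{-1}(\{\text{the one or two omitted values of }f\})$, which by discreteness of $G$ (or: since $g$ is locally a branched cover onto an open set and is not globally constant, because $g(\lam_0)=\infty$ is a non-omitted value) is a proper analytic subset of $M$.

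\textbf{Main obstacle.} The genuinely delicate point is part (a): making rigorous the reduction to Theorem A for a map $f_{\lam_1}$ that \emph{a priori} could acquire poles even though $f_{\lam_0}$ has none. The honest statement is that the family cannot have cycles disappearing to infinity \emph{at any} $\lam_1\in M$, not just near $\lam_0$; so I must first establish, before invoking Theorem A at $\lam_1$, that having no non-omitted poles is a property of the \emph{whole} family (the Picard/connectedness argument above, which gives $\phi_\lam^{-1}(\infty)\equiv\infty$). Once that is in hand, $f_{\lam_1}$ is entire for every $\lam_1$, Lemma~\ref{lem:limcycle} forces any putative virtual cycle to be $(\infty,\dots,\infty)$, and the passivity of $\infty$ closes the loop exactly as in the remark following Theorem A in the introduction. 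So the architecture is: (i) global propagation of ``no non-omitted poles'' via Picard $+$ connectedness; (ii) Lemma~\ref{lem:limcycle} to see virtual cycles are trivial; (iii) Theorem A to get a contradiction. For (b), the only subtlety is checking $g(\lam)=\phi_\lam^{-1}(\infty)$ is not locally constant near a point where $f_{\lam}$ has a non-omitted pole, which again follows from $g(\lam_0)=\infty$ being non-omitted by $f$ together with the fact that the omitted-value set of $f$ is finite, so the ``bad'' locus is contained in the proper analytic (indeed discrete) fiber $g^{-1}(\{\text{omitted values of }f\})$.
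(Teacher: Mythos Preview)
Your proof of part (a) has a genuine gap: you equate ``$f_{\lam_0}$ has no non-omitted poles'' with ``$f_{\lam_0}$ is entire'', but these are not the same. A transcendental meromorphic map can have exactly one pole $v$ which is itself an omitted value (e.g.\ $f(z)=e^z/z$, whose unique pole $0$ is omitted). In that case $\infty$ and $v$ are both Picard exceptional, and the map is \emph{not} entire. The paper's proof explicitly splits into the entire case (where your argument via Theorem~A and the virtual cycle $\infty,\ldots,\infty$ is fine) and this second case, where the virtual cycle can take the form $\infty, v_\lam, \infty, v_\lam,\ldots$; one must then argue separately that $v_\lam$ is a passive asymptotic value (its orbit is persistently $v_\lam\mapsto\infty$), that $\infty$ is passive (Lemma~\ref{lem:infinitypassive}), and that no critical point can non-persistently collide with $v_\lam$ (since poles and critical points both move via $\psi_\lam$). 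Your Lemma~\ref{lem:limcycle} step (``the only finite accumulation points would have to be prepoles, of which there are none'') fails here precisely because there \emph{is} a pole. Note also that the hypothesis of (a) already says ``for all $\lam\in M$'', so your propagation worry is moot.

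For part (b) your direction is right but the execution is off on two points. First, the equivalence ``$f_\lam$ has only omitted poles $\iff \phi_\lam^{-1}(\infty)$ is an omitted value of $f$'' is false: you should work with the \emph{exceptional} values $E$ of $f$, and even then the case $\phi_\lam^{-1}(\infty)\equiv\infty$ (constant) needs separate handling, since $f$ may have finitely many poles. The paper splits into $\lam\mapsto\phi_\lam^{-1}(\infty)$ constant versus non-constant; in the constant case it follows a specific non-omitted pole $p_\lam=\psi_\lam(p)$ and shows it can become exceptional only on the analytic set $\bigcup_{v\in E}\{\lam:\psi_\lam(p)=\phi_\lam(v)\}$, while in the non-constant case the bad set is $\bigcup_{v\in E}\{\lam:\phi_\lam(v)=\infty\}$. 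Second, Lemma~\ref{lem:Gopendiscrete} does not apply to $\phi_\lam^{-1}(\infty)$ as you invoke it, since the ``$g$'' there would be the constant $\infty$; but you don't need it---the sets above are cut out by holomorphic equations in $\lam$ directly, since $\lam\mapsto\phi_\lam(v)$ and $\lam\mapsto\psi_\lam(p)$ are holomorphic.
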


\begin{proof}
	To prove (a), observe first that the assumption implies that for all $\lam \in M$, $f_\lam$ has at most one pole, which (if it exists) is also an omitted value, hence an asymptotic value.
	By Lemma \ref{lem:infinitypassive}, either all the maps $f_\lam$ are entire, or they all have exactly one pole, which is also an asymptotic value. By \cite{el}, in the first case cycles never disappear to infinity and we are done. Let us therefore we are in the second case and denote by $v_\lam$ this pole/asymptotic value. 
	
	Assume for a contradiction that a cycle disappears at $\infty$ at some $\lam_0 \in M$. By Theorem A, the map $f_{\lam_0}$ has a virtual cycle which contains either an active asymptotic value or an active critical point. Since $v_\lam$ is the only pole of $f_\lam$, that virtual cycle must be either of the form $\infty, \ldots, \infty$, 
	or of the form $\infty, v_\lam, \infty, \ldots , v_\lam$. It is clear that $v_\lam$ is always passive, and by 
	Lemma \ref{lem:infinitypassive},  $\infty$ is always an asymptotic value, so it is also always passive.
	Therefore, the only remaining possibility is that a critical point collides (non-persistently) with $v_{\lam}$ at $\lam=\lam_0$.
	
	But by the proof of Lemma \ref{lem:infinitypassive}, we have $v_\lam=\psi_\lam(v_{\lam_0})$, and critical points of $f_\lam$
	are of the form $\psi_\lam(c)$, where $c$ are critical points of $f_{\lam_0}$. Therefore, if $v_{\lam_0}$ is a critical point,
	then so is $v_{\lam}$ for every $\lam \in M$, and so it is a passive critical point, contradicting Theorem A.

To show part (b), recall that $f_\lam=\phi_\lam \circ f \circ \psi_\lam^{-1}$, where $f:=f_{\lam_0}$ and $\phi_{\lam_0}=\psi_{\lam_0}=\id$. The set of poles of $f_\lam$ is the set $\psi_\lam \left( f^{-1}(\{\phi_\lam^{-1}(\infty)\})  \right)$.
	Let us denote by $E$ the set (possibly empty) of (Picard) exceptional values of $f$.
	
	We will distinguish two cases: either $\lam \mapsto \phi_\lam^{-1}(\infty)$ is constant or it is not.
	Assume first that it is: $\phi_\lam^{-1}(\infty) \equiv \infty \in \rs$ on $M$ (since $\phi_{\lam_0}(\infty)=\infty$). Then the poles of $f_\lam$ are the 
	$\psi_\lam(a_i)$, $a_i \in f^{-1}(\{\infty\})$, and they move holomorphically over $M$. Let $p$ be the non-omitted pole of 
	$f_{\lam_0}$, and $p_\lam:=\psi_\lam(p)$. Observe that exceptional values of $f_\lam$ (if they exist) are of the form 
	$\phi_\lam(v)$, where $v \in E$. In particular, 
	if $p_\lam \neq \phi_\lam(v)$ for every $v \in E$, then $f_\lam$ has a non-omitted pole.
	Since  $p_{\lam_0} \neq \phi_{\lam_0}(v)$ for every such $v$, the set
	$\bigcup_{v \in E} \{\lam \in M: p_\lam = \phi_\lam(v)\}$ is either empty or an analytic hypersurface of $M$.

	Let us now assume that $\phi_\lam^{-1}(\infty)$ is non-constant. Then for every $\lam \in M$ such that $\phi_\lam^{-1}(\infty) \notin  E$, $f_\lam$ has infinitely many poles, and in particular it has non-omitted poles. Therefore, outside of the
	proper analytic set $\bigcup_{v \in E} \{\lam \in M: \phi_\lam(v)=\infty \}$, $f_\lam$ has infinitely many poles and in particular some non-omitted poles.
\end{proof}

In view of this result, we will focus in what follows on the case of natural families in which there is 
at least one $\lam_*$ such that $f_{\lam_*}$ has at least one non-omitted pole. Indeed, otherwise, it follows from Proposition \ref{prop:excep} that everything works exactly as for the classical case of rational maps.

\begin{prop}[Truncated parameters are dense in the activity locus]\label{prop:density_of_sp}
	Let $\{f_\lam\}_{\in \in M}$ be a natural family of finite type meromorphic maps, and assume that there exists $\lam_* \in M$ 
	such that $f_{\lam_*}$ has at least one non-omitted pole. Assume that an asymptotic (resp. critical) value is active at some $\lam_0 \in M$.
	Then $\lam_0$ can be approximated by virtual cycle parameters (resp. centers) of arbitrarily large order.
\end{prop}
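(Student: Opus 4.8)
The plan is to combine the Shooting Lemma (Proposition \ref{pseudomontel}) with the transfer-to-parameter-space principle from Lemma \ref{lem:perturbA}, arguing by induction on the order of the truncated parameter we wish to produce. First I would set up the base case: since $f_{\lambda_*}$ has a non-omitted pole, Proposition \ref{prop:excep}(b) tells us that outside a proper analytic subset $H_0 \subset M$ every $f_\lambda$ has a non-omitted pole; in particular the set of parameters $\lambda$ for which $f_\lambda$ has a non-omitted pole is dense. Let $v_\lambda$ be the active singular value at $\lambda_0$ (asymptotic or critical). Using activity and Montel's theorem (or rather, using that the set of prepoles of $f_\lambda$ accumulates on $\julia(f_\lambda)$, which has nonempty interior-free complement), we can find $\lambda_1$ arbitrarily close to $\lambda_0$ such that $f_{\lambda_1}^{k}(v_{\lambda_1}) = \infty$ for some $k \geq 0$, and such that this relation is non-persistent near $\lambda_1$; this gives a truncated parameter of \emph{some} order close to $\lambda_0$. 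The content of the proposition is that the order can be made arbitrarily large, and that when $v$ is asymptotic the truncated parameter is genuinely a virtual cycle parameter (which is automatic once $v$ is asymptotic), while when $v$ is critical we additionally want these to be centers, i.e. the critical point eventually lands on a periodic cycle of a superattracting basin — here I would instead read ``center'' as superattracting parameter and invoke Proposition \ref{prop:acc_centers} together with Theorem B at the end. Let me focus on the order statement.

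Next, the inductive step. Suppose we have found $\lambda_1$ near $\lambda_0$ which is a truncated parameter for $v_\lambda$ of order $n$, say $f_{\lambda_1}^n(v_{\lambda_1}) = \infty$ non-persistently. Apply Lemma \ref{lem:perturbA}: the activity locus $\mathcal A(v_\lambda)$ is nowhere locally contained in a proper analytic subset, so there are parameters arbitrarily close to $\lambda_1$, lying in $\mathcal A(v_\lambda)$, at which $v_\lambda$ is \emph{not} eventually mapped to $\infty$ (i.e. avoiding the analytic hypersurface $\{f_\lambda^j(v_\lambda) = \infty\}$ for all $j \leq n$), yet still active. Then I am in the hypotheses of the Shooting Lemma at such a nearby parameter $\lambda_1'$: there is some $m$ with $f_{\lambda_1'}^m(v_{\lambda_1'}) = \infty$ and I want to upgrade $m$. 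Here is where I would use part (a) of Proposition \ref{pseudomontel} with $\gamma(\lambda) \equiv \infty$: this requires $\infty \notin S(f_{\lambda_1'})$. If $\infty$ is an asymptotic value of $f_{\lambda_1'}$ this direct approach fails and I would instead use part (b) with five constant targets $\gamma_i$ chosen among distinct prepoles of some high order $N$ of $f_{\lambda_0}$ (which exist and move holomorphically, being backward orbit of $\infty$), obtaining $f_{\lambda'}^{m+1}(v_{\lambda'}) = \gamma_i(\lambda')$ for one such prepole of order $N$, hence $v_{\lambda'}$ is itself a prepole of order $m+1+N$. Iterating, and noting at each stage that the new parameter can be chosen as close as we like to the previous one (hence to $\lambda_0$) and remains active by the Lemma \ref{lem:perturbA} argument, drives the order to infinity. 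The asymptotic-value case is the cleaner one because a virtual-cycle parameter is, by definition, any truncated parameter whose active singular value is asymptotic, so no extra work is needed there.

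For the ``centers'' assertion in the critical case: once we have a critical prepole parameter $\lambda'$ of large order $n$, i.e.\ a critical point $c(\lambda')$ with $f_{\lambda'}^n(c(\lambda')) = \infty$ non-persistently, I would appeal to Proposition \ref{prop:acc_centers} (accumulation of centers), which should give superattracting parameters accumulating on $\lambda'$, and these inherit a large period/order. Concretely one perturbs so that instead of hitting $\infty$ the critical orbit hits the critical point itself or a prepole that is then steered into a cycle; the combinatorial bookkeeping that the order (period) stays large is the point where care is needed.

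\medskip
\noindent\textbf{Main obstacle.} The delicate point is the interplay between two constraints at each inductive step: to apply the Shooting Lemma we need the singular value's orbit to be truncated (land on $\infty$) at the current parameter, but to \emph{increase} the order we must first move to a nearby parameter where that same truncation relation is \emph{broken} (so that we are genuinely in the non-persistent situation of Proposition \ref{pseudomontel}) while keeping $v_\lambda$ active — and then the Shooting Lemma produces a new, longer truncation. Making this ``break, then re-truncate at higher order'' loop converge to $\lambda_0$, and ensuring at the re-truncation step that we really gain length (using part (a) with $\gamma\equiv\infty$ when $\infty\notin S(f)$, or part (b) against high-order prepoles when $\infty$ is asymptotic), is the heart of the argument. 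A secondary subtlety is handling the case $n=0$ (i.e. $\infty$ itself asymptotic, so $v(\lambda_0)=\infty$), which must be treated with part (b) of the Shooting Lemma from the outset since then $\infty \in S(f_{\lambda_0})$.
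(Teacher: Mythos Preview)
Your inductive step contains a genuine gap. You move from the truncated parameter $\lambda_1$ (where $f_{\lambda_1}^n(v_{\lambda_1})=\infty$) to a nearby $\lambda_1'$ at which the truncation relations $f_\lambda^j(v_\lambda)=\infty$ for $j\le n$ are all broken, and then assert ``I am in the hypotheses of the Shooting Lemma at such a nearby parameter $\lambda_1'$: there is some $m$ with $f_{\lambda_1'}^m(v_{\lambda_1'}) = \infty$.'' But the Shooting Lemma requires precisely that the orbit be truncated at the parameter where you apply it, which you have just arranged to fail at $\lambda_1'$; mere activity at $\lambda_1'$ does not produce such an $m$. The iteration can be salvaged by applying the Shooting Lemma directly at $\lambda_1$ with $\gamma$ a holomorphically moving prepole of high order, but the detour through $\lambda_1'$ is incoherent as written.

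More importantly, the whole inductive machinery is unnecessary. The paper's proof is a two-line dichotomy. Either the family $\{\lambda\mapsto f_\lambda^n(v_\lambda)\}_{n}$ fails to be well-defined on every neighborhood of $\lambda_0$: then truncated parameters accumulate at $\lambda_0$ by definition, and their orders are unbounded (if bounded by $N$, the identity theorem forces $f_\lambda^N(v_\lambda)\equiv\infty$, hence $v_\lambda$ is passive). Or the family is well-defined near $\lambda_0$ but not normal: then pick two holomorphically moving prepoles $p_1(\lambda),p_2(\lambda)$ (available after the perturbation via Lemma~\ref{lem:perturbA} and Proposition~\ref{prop:excep}) and observe that $g_n(\lambda):=\frac{f_\lambda^n(v_\lambda)-p_1(\lambda)}{p_1(\lambda)-p_2(\lambda)}$ is not normal, hence by Montel hits $p_1$ or $p_2$ for infinitely many $n$, giving truncated parameters of arbitrarily large order in one stroke --- no induction, no Shooting Lemma. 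Your ``base case'' Montel argument was in fact already the whole proof in this second case; you did not need to upgrade it.

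On ``centers'': the paper's proof of this proposition only produces truncated parameters (virtual cycle parameters when $v$ is asymptotic, critical prepole parameters when $v$ is critical); the passage from critical prepole parameters to superattracting centers is deferred to Proposition~\ref{prop:acc_centers} and invoked separately where needed (e.g.\ in the proof of Theorem~E). Your instinct to call on that proposition is correct, but it is not part of the argument here.
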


\begin{proof}
By Lemma  \ref{lem:perturbA} and Proposition \ref{prop:excep}, we can perturb $\lam_0$ if necessary to find some $\lam_1$ that is  still in the activity locus, but for which every $f_\lam$ for $\lam$ in a neighborhood of $\lam_1$ has a non-omitted pole. In the rest of the proof, we still denote it by $\lam_0$.

Then, either there is no neighborhood $U$ of $\la_0$ for which $\{f_\la^n(v_\la)\}_n$ is defined  for all $n$ and all $\la\in U$; or for every neighborhood $U$ of $\la_0$ where the family $\{f_\la^n(v_\la)\}_n$ is well defined, it is not normal.

In the first case, $\la_0$ can be approximated by truncated parameters by definition of those. Moreover these truncated parameters must have unbounded orders or otherwise, there exists $N>0$ and a sequence of $\la_k\to \la_0$ such that $f_{\la_k}^N(v_{\la_k})=\infty$. By continuity, $f_{\la_0}^N(v_{\la_0})=\infty$ and by the identity theorem $f_\la^N(v_\la)=\infty$ for all $\la \in U$ (and in fact for all $\la \in M$), which means that $v_\la$ is passive at $\la_0$, a contradiction. 

In the second case, let $p_1(\la)$ and $p_2(\la)$ be two distinct prepoles  varying analytically with $\la$ in $U$. It follows that the family of maps $g_n(\la)=\frac{f_\la^n(v_\la)-p_1(\la)}{p_1(\la)-p_2(\la)}$  is not normal as well, hence it must take the value $0, -1$ or $\infty$ for infinitely many different values of $n$. Since it cannot take the value infinity because the poles are distinct, it follows that it attains $0$ or $1$ infinitely many times, which correspond to truncated parameters $\la\in U$ of order $n+1$ tending to infinity.  

To prove the density of truncated parameters in $\act$ it only remains to see that they themselves belong to the activity locus. But this is straightforward from the definition because if $\la_0$ is a truncated  parameter for $v_\la$, it means that $f_{\la_0}^N(v_{\la_0})=\infty$ for some $N\geq 0$, and the relation is non-persistent. Therefore the family $\{f_\la^n(v_\la)\}_n$ cannot be well defined in any neighborhood of $\la_0$. 
\end{proof}


As a corollary of Proposition~\ref{prop:density_of_sp} and Proposition~\ref{pseudomontel} (the Shooting Lemma) we obtain the density of other dynamically meaningful parameters, namely Misiurewicz and escaping parameters.
\begin{coro}[Density of Misiurewicz and escaping parameters] Under the assumptions of Proposition~\ref{prop:density_of_sp}, \begin{enumerate}
\item The set of Misiurewicz parameters (for which a singular value is eventually and non-persistently mapped to a repelling periodic point) is dense in  $\act$.
\item If there exists an escaping point which depends holomorphically on $\lambda$, the set of parameters for which there is a singular value with orbit converging to infinity is dense (escaping parameters).
\end{enumerate}
\end{coro}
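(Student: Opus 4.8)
The plan is to deduce both statements from Proposition~\ref{prop:density_of_sp} (density of truncated parameters in $\act$) together with part (a) of the Shooting Lemma, Proposition~\ref{pseudomontel}. Proposition~\ref{prop:density_of_sp} lets us find, arbitrarily close to any parameter of $\act$, a truncated parameter $\la_1$ at which some marked singular value $v$ is a prepole of some (arbitrarily large) order $n$, non-persistently; the Shooting Lemma then lets us push the truncated orbit of $v$ onto any prescribed holomorphically varying target, at parameters $\la'$ arbitrarily close to $\la_1$.

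For (1), fix $\la_0\in\act$ and a neighbourhood $U$ of $\la_0$, and pick by Proposition~\ref{prop:density_of_sp} a truncated parameter $\la_1\in U$ with $f_{\la_1}^n(v_{\la_1})=\infty$ non-persistently, for some marked singular value $v$ and some $n$. Since $f_{\la_1}$ is of finite type, $\JJ(f_{\la_1})$ is infinite, so it contains a repelling periodic point $z_*$ of some period $p$ with $z_*\notin S(f_{\la_1})$; its multiplier being $\neq 1$, the implicit function theorem yields a holomorphic germ $\la\mapsto z_*(\la)$ of period-$p$ repelling periodic points of $f_\la$ with $z_*(\la_1)=z_*$. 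Applying Proposition~\ref{pseudomontel}(a) with $\gamma:=z_*$ (valid because $z_*(\la_1)\notin S(f_{\la_1})$) produces parameters $\la'\in U$ arbitrarily close to $\la_1$ with $f_{\la'}^{n+1}(v_{\la'})=z_*(\la')$, i.e.\ with $v_{\la'}$ mapped after $n+1$ steps onto the repelling periodic point $z_*(\la')$.

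It remains to show that such a $\la'$ is genuinely a Misiurewicz parameter, i.e.\ that this relation is non-persistent. As recalled just before the proof of Proposition~\ref{pseudomontel}, on a small punctured disc about $\la_1$ the map $\la\mapsto f_\la^{n+1}(v_\la)$ is meromorphic and has an essential singularity at $\la_1$; in particular it cannot coincide there with the holomorphic germ $z_*(\la)$, so $\la\mapsto f_\la^{n+1}(v_\la)-z_*(\la)$ is a non-identically-zero meromorphic function, whose zero set $Z$ is discrete in the punctured disc. Hence for every $\la'\in Z$ the equality $f_\la^{n+1}(v_\la)=z_*(\la)$ holds only at isolated parameters, so $\la'$ is a Misiurewicz parameter of preperiod $n+1$; and since the Shooting Lemma guarantees that $Z$ accumulates at $\la_1\in U$, such $\la'$ lie in $U$. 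As $n$ may be chosen arbitrarily large, the preperiods obtained are unbounded.

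For (2) one repeats the argument with a holomorphically varying escaping point $\la\mapsto\xi(\la)$ in place of $z_*$: first replace $\xi$ by a forward iterate $\la\mapsto f_\la^{j}(\xi(\la))$ whose value at $\la_1$ avoids the finite set $S(f_{\la_1})$ (possible since the orbit of $\xi(\la_1)$ is infinite), then apply Proposition~\ref{pseudomontel}(a) with this $\gamma$ to obtain $\la'\in U$ arbitrarily close to $\la_1$ with $f_{\la'}^{n+1}(v_{\la'})=f_{\la'}^{j}(\xi(\la'))$. The orbit of $v_{\la'}$ then eventually coincides with the escaping tail of the orbit of $\xi(\la')$ and hence tends to $\infty$, so $\la'$ is an escaping parameter; these are dense in $\act$. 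The one delicate point is the non-persistence assertion in (1): it rests entirely on $\la_1$ being a \emph{truncated} parameter, which is precisely what forces $\la\mapsto f_\la^{n+1}(v_\la)$ to have an essential singularity at $\la_1$ --- if $Z$ contained an open set, the identity principle on the punctured disc would give $f_\la^{n+1}(v_\la)\equiv z_*(\la)$ there, hence a finite-valued holomorphic extension across $\la_1$, contradicting $f_{\la_1}^n(v_{\la_1})=\infty$ together with the essential singularity of $f_{\la_1}$ at $\infty$. In (2) no such issue arises, since the escaping relation need not be non-persistent there.
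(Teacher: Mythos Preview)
Your proof is correct and follows exactly the approach the paper indicates: it combines Proposition~\ref{prop:density_of_sp} with part (a) of the Shooting Lemma (Proposition~\ref{pseudomontel}), and the paper itself does not spell out the details beyond that. Your handling of the non-persistence in (1) via the essential singularity of $\lam\mapsto f_\lam^{n+1}(v_\lam)$ at the truncated parameter is the right mechanism, and your replacement of $\xi$ by a suitable forward iterate in (2) to meet the hypothesis $\gamma(\la_1)\notin S(f_{\la_1})$ is the natural way to make the Shooting Lemma applicable.
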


Last before the proof of Theorem D, we see two more approximation results, this time from the stable locus. On the one hand Proposition \ref{prop:acc_centers} will show {\em centers} (i.e. parameters for which a critical point is periodic)  accumulate onto (a subset of) the activity locus (Proposition \ref{prop:acc_centers}). On the other hand,  as long as at least one map in the family has non omitted poles, every active parameter is accumulated  by parameters for which there is an attracting cycle, of periods increasing to infinity. This last statement is Corollary \ref{lem:highattrper}, which follows from Theorem B and Proposition \ref{prop:acc_centers}.

\begin{prop}[Critical prepole parameters are accumulated by centers] 
\label{prop:acc_centers}
Let $(f_\lambda)_{\lambda\in M}$ be a natural family of meromorphic maps of finite type. Let $\lambda_0$ be a  critical prepole parameter of order $n\geq 0$ for the critical value $v_{\lambda_0}$, that is $f^n_{\lambda_0}(v_{\lambda_0})=\infty$. Assume further that $v_{\la_0}$ has a critical preimage $c_{\la_0}$ which is not a Picard exceptional value for $f_{\la_0}$. Then there exists a sequence of parameters $\la_{k} \to \la_0$ as $k\to\infty$,  such that  each $f_{\la_k}$ has a superattracting cycle of period  $n+3$.
\end{prop}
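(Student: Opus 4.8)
The plan is to use the Shooting Lemma (Proposition~\ref{pseudomontel}(a)) to perturb $\lambda_0$ so that the orbit of the critical value $v_\lambda$, which at $\lambda_0$ reaches $\infty$ after $n$ steps, instead comes back (after two or three further steps) to the critical point $c_\lambda$, thereby producing a superattracting cycle through $c_\lambda$.

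First I would set up the marked points. Since the family is natural, writing $f_\lambda=\phi_\lambda\circ f_{\lambda_0}\circ\psi_\lambda^{-1}$ we see that $c_\lambda:=\psi_\lambda(c_{\lambda_0})$ is a holomorphically moving critical point of $f_\lambda$, with $f_\lambda(c_\lambda)=\phi_\lambda(f_{\lambda_0}(c_{\lambda_0}))=\phi_\lambda(v_{\lambda_0})=v_\lambda$ for $\lambda$ near $\lambda_0$. Now I split into two cases according to whether $c_{\lambda_0}\in S(f_{\lambda_0})$. In the case $c_{\lambda_0}\notin S(f_{\lambda_0})$ — which occurs in particular when $c_{\lambda_0}$ is not a critical value (and not an asymptotic value) — I take $\gamma(\lambda):=c_\lambda$ as the target in the Shooting Lemma. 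In the case $c_{\lambda_0}\in S(f_{\lambda_0})$, i.e.\ $c_{\lambda_0}$ is a critical value, I instead aim at a preimage of $c_{\lambda_0}$: because $c_{\lambda_0}$ is not exceptional for $f_{\lambda_0}$, its fibre $f_{\lambda_0}^{-1}(c_{\lambda_0})$ contains a point $w_{\lambda_0}$ with $f_{\lambda_0}'(w_{\lambda_0})\neq 0$ and $w_{\lambda_0}\notin S(f_{\lambda_0})$ (there are infinitely many preimages in the transcendental case, and only finitely many of them can be critical or singular). By the implicit function theorem $w_{\lambda_0}$ continues to a holomorphic germ $\lambda\mapsto w_\lambda$ with $f_\lambda(w_\lambda)=c_\lambda$, and $w_\lambda\notin S(f_\lambda)$ for $\lambda$ near $\lambda_0$ since $S(f_\lambda)$ moves holomorphically; here I take $\gamma(\lambda):=w_\lambda$.

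In both cases $\gamma$ is a holomorphic germ with $\gamma(\lambda_0)\notin S(f_{\lambda_0})$, and by hypothesis $f_{\lambda_0}^n(v_{\lambda_0})=\infty$ while this relation is non-persistent; so Proposition~\ref{pseudomontel}(a) produces a sequence $\lambda_k\to\lambda_0$ with $f_{\lambda_k}^{n+1}(v_{\lambda_k})=\gamma(\lambda_k)$. In the first case this is $f_{\lambda_k}^{n+1}(v_{\lambda_k})=c_{\lambda_k}$, whence $f_{\lambda_k}^{n+2}(v_{\lambda_k})=f_{\lambda_k}(c_{\lambda_k})=v_{\lambda_k}$; in the second case $f_{\lambda_k}^{n+1}(v_{\lambda_k})=w_{\lambda_k}$, whence $f_{\lambda_k}^{n+2}(v_{\lambda_k})=c_{\lambda_k}$ and $f_{\lambda_k}^{n+3}(v_{\lambda_k})=v_{\lambda_k}$. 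Either way $v_{\lambda_k}$ is periodic and its cycle passes through the critical point $c_{\lambda_k}$, so the cycle is superattracting. To see the period is \emph{exactly} $n+2$ (resp.\ $n+3$), note that the iterates $v_{\lambda_k},f_{\lambda_k}(v_{\lambda_k}),\dots,f_{\lambda_k}^{n-1}(v_{\lambda_k})$ converge, as $\lambda_k\to\lambda_0$, to the finite points $v_{\lambda_0},\dots,f_{\lambda_0}^{n-1}(v_{\lambda_0})$, whereas $f_{\lambda_k}^{n}(v_{\lambda_k})\to\infty$ because $f_{\lambda_0}^{n-1}(v_{\lambda_0})$ is a pole; a period $p\le n$ would force $f_{\lambda_k}^n(v_{\lambda_k})=f_{\lambda_k}^{n\bmod p}(v_{\lambda_k})$ to stay bounded, a contradiction for large $k$. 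Hence $p\ge n+1$, and since $p$ divides $n+2$ (resp.\ $n+3$) we get $p=n+2$ (resp.\ $p=n+3$) after discarding finitely many degenerate $\lambda_k$ — any such degeneracy (e.g.\ $v_{\lambda_k}=c_{\lambda_k}$) would, if it occurred along an infinite subsequence, propagate by the identity principle to the identical relation $c_\lambda\equiv v_\lambda$, which is impossible.

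The main obstacle is the construction in the second case of a regular marked preimage $w_\lambda$ of $c_\lambda$: this is exactly where the non-exceptionality hypothesis on $c_{\lambda_0}$ is used, ensuring that $c_{\lambda_0}$ has a preimage which is neither a critical point nor a singular value, hence persists as a holomorphic germ disjoint from $S(f_\lambda)$ and is an admissible target for the Shooting Lemma. Everything else — the closing of the cycle and the period bookkeeping — is routine once one records that $f_{\lambda_k}^n(v_{\lambda_k})\to\infty$ along the sequence $\lambda_k$.
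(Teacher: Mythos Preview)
Your proof is correct and follows the same overall strategy as the paper, with one genuine difference in the second case. When $c_{\lambda_0}$ is not a singular value, both you and the paper apply part~(a) of the Shooting Lemma with target $\gamma(\lambda)=c_\lambda$. When $c_{\lambda_0}$ \emph{is} a singular value, the paper instead invokes part~(b) of the Shooting Lemma: it picks five distinct preimages $a_1,\dots,a_5$ of $c_{\lambda_0}$ (which exist by non-exceptionality) and uses the Ahlfors Five Islands Theorem to hit one of them. Your route is more elementary: you observe that since $c_{\lambda_0}$ is non-exceptional and $S(f_{\lambda_0})$ is finite, some preimage $w_{\lambda_0}$ lies outside $S(f_{\lambda_0})$, so part~(a) applies directly with target $w_\lambda$. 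This avoids Ahlfors Five Islands entirely; the paper's approach, on the other hand, does not need to locate a preimage outside the singular set. You also supply an argument that the period is \emph{exactly} $n+2$ (resp.\ $n+3$), which the paper simply asserts; your reasoning via $f_{\lambda_k}^n(v_{\lambda_k})\to\infty$ and the identity principle is sound.
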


\begin{proof}
Since $f_\la$ is a natural family, critical points can be followed holomorphically with $\la$. Hence, let $c(\la):=\psi_\lam(c_{\lam_0})$, so that $c(\lam)$ is a critical point of $f_\lam$ and $c(\lam_0)=c_{\lam_0}$.

	Up to restricting to a suitable disk in parameter space, we may assume without loss of generality that $M=\D$ and $\lam_0=0$. We let $f:=f_0=f_{\lam_0}$. 
	We distinguish two cases: either there is at least one preimage $a$ of $c_0$ which is neither critical nor singular; or there is none.
	
	\begin{enumerate}
			\item Let us first assume that there is a preimage $a$ of $c_0$ which is neither critical nor singular.
			By the Implicit Function Theorem, there exists a holomorphic map $a(\lam)$ defined near $\lam=0$ 
			such that $f_\lam(a(\lam))=c(\lam)$, where $c(\lam)=\psi_\lam(c_{\lam_0})$. By Proposition 	
				\ref{pseudomontel}, 
			there exists a sequence $\lam_k \to 0$ such that  $f_{\lam_k}^{n+1}(v_{\lam_k})=f_{\lam_k}^{n+2}(c({\lam_k})) =a(\lam_k)$.
			Then $c(\lam_k)$ is a superattracting periodic point of period $n+3$, and we are done with this case.
			\item  Otherwise : since $c_0$ is non-exceptional and there are only finitely many singular values, there are infinitely many critical preimages of $c_0$ which are not singular values, and they must all be critical. Let  us pick 3 of them and let us call them $a_1, a_2, a_3$. The Implicit Function Theorem cannot be applied in this case, but if we let 
			$a_i(\lam):=\psi_\lam(a_i)$, then the $a_i(\lam)$ are critical points of $f_\lam$, and $f_\lam(a_i(\lam))=
			 \phi_\lam(c_{\lam_0}) =: w(\lam)$ is a critical value. 
		 Let us emphasize here that although $ w(0)=c(0)=c_{\lam_0}$,  $w(\lam) \neq c(\lam)$ in 
		 general when $\lam \neq 0$. In particular, $w(\lam)$ is a critical value for $f_\lam$ but is not 
		 necessarily a critical point.
			By assumption, $f^{n+1}(w(\lam_0))=\infty$.
			Let 
			\[
			G(\lam):=f_\lam^{n+2}( w(\lam))\]
		 and 
			\[
			H(\lam):=\frac{G(\lam)-a_1(\lam)}{G(\lam)-a_2(\lam)} \frac{a_3(\lam)-a_2(\lam)}{a_3(\lam)-a_1(\lam)}.
			\] 
			The map $G$ has an isolated singularity at $\lam=0$, which is an essential 
			singularity by the Shooting Lemma (Proposition \ref{pseudomontel}); 
			therefore the same holds for $H$.
			By Picard's theorem, $H$ cannot omit $0,1,\infty$ in any neighborhood of $0$; therefore, we can 
			find a sequence $\lam_k \to 0$ such that $G(\lam_k) = a_{i_k}(\lam_k)$, for $1 \leq i_k \leq 3$. 
			This means that the critical point $a_{i_k}(\lam_k)$ is periodic of period $n+3$, and we are done 
			in this case as well.
		\end{enumerate}	
\end{proof}

\begin{rem}
	The assumption that $c_{\lam_0}$ is non-exceptional is necessary, as shown by the family $f_\la(z)=\tan(\frac{\pi}{4} \la z (z-2i))$. Maps in this family have a unique critical point at $c=i$, which is omitted and hence exceptional. The family is natural since it satisfies trivially the conditions of Theorem \ref{thm:natural} (the critical value is $\lambda$).  Hence the parameter $\la_0=\pi/2$ is a critical prepole parameter since $f_{\pi/2}(i)= \pi/2$ which is a pole.    On the other hand no such map can have a superattracting cycle since the only critical point is omitted.
	\end{rem}

\begin{coro}[Active parameters are accumulated by parameters with attracting cycles]\label{lem:highattrper}
	Let $\{f_\lam\}_{\lam \in M}$ be a natural family of finite type meromorphic maps, and assume that there exists $\lam_* \in M$ 
	such that $f_{\lam_*}$ has at least one non-omitted pole. Assume that a singular value $v(\lam_0)$ is active at some $\lam_0 \in M$.
	Then $\lam_0$ can be approximated by parameters $\lam$ such that $f_\lam$ has attracting cycles of arbitrarily large periods.
\end{coro}

\begin{proof}
	 If there exist  $n \in \N$ and  $v_i(\lam) \in S(f_\lam)$  such that $f_\lam^n(v_i(\lam)) \equiv \infty$
	 let 
	$$N_0:=\max \{n \in \N: f_\lam^n(v_i(\lam)) \equiv \infty, v_i(\lam) \in S(f_\lam)\}.$$
	Otherwise, $N_0=0$.
	Let $U$ be a neighborhood of $\lam_0$ in $M$, and let $N >N_0$. We will prove that there exists $\lam \in U$ such that $f_\lam$ has an attracting cycle of period at least $N$.
	By Proposition \ref{prop:density_of_sp}, there exists $\lam_1 \in U$ such that $f_{\lam_1}^{n}(v(\lam_1))=\infty$  for some $n\geq N$, 
	and $f_{\lam}^{n}(v(\lam))\not\equiv\infty$ on $M$.
	
	By Theorem B and Proposition \ref{prop:acc_centers}, there is only one  remaining case to consider: the case where $v(\lam_1)$ is a critical value, with $v(\lam_1)=f_{\lam_1}(c(\lam_1))$ and $c(\lam_1)$ is a critical point which is  an exceptional value for $f_{\lam_1}$. 
	
	Let ${w(\la)}:=\phi_\lam(c(\lam_1))$. Since $c(\lam_1)$ is an exceptional value, it is also an asymptotic value; therefore ${ w(\la)}$ is an asymptotic value for $f_\lam$ (see Section \ref{sec:natural}). By our choice of $N$, we have $f_{\lam_1}^{{n+1}}({ w}(\lam_1))=\infty$ but  $f_{\lam}^{{n+1}}({w}(\lam))\not\equiv\infty$. 
	We can then apply Theorem B, to find $\lam_2 \in U$ such that $f_{\lam_2}$ has an attracting cycle of period ${n}+2$.
	This concludes the proof.

\end{proof}

We are now ready to prove Theorem D. Since in the assumptions there is a cycle disappearing at infinity, in view of Theorem A there is at least one function in the family which has  at least one non omitted pole.

\begin{proof}[\bf Proof of Theorem D]

We argue by contradiction, and so we assume that there is a neighborhood $V$ of $\lam_0$ in parameter space such that 
for all $\lam \in V$, $f_\lam$ has no non-persistent parabolic cycles of period up to $n$.
Recall the notations of Section \ref{sec:exiting}: $P_n:=\{(\lam,z) \in M \times \C : z=f_\lam^n(z)\}$, 
and $\pi_1: P_n \to M$ is the projection on the first coordinate.
The idea of the proof is the following: first we reduce to the case when $V$ is a one-dimensional disk centered at $\lam_0$.
Then, we prove that $\pi_1: P_n \to V$ restricts to a finite degree branched cover. This allows us (up to passing to a finite branched cover) to construct a single-valued parametrization of the disappearing periodic point, with only a pole at $\lam_0$. 
Using this parametrization, we show that cycles disappear at infinity from any directions in dynamical space, which finally leads to a contradiction.

\medspace
Let us make this idea precise.
 By assumption, there exists a curve $\tilde \gamma \subset P_n$ such that $\limt \tilde \gamma(t)=\infty$ and $\limt \pi_1 \circ \tilde \gamma(t)=\lam_0$, and moreover, if $\tilde \gamma(t)=:(\lam(t), z(t))$, then $z(t)$ is an attracting periodic point for $f_{\lam(t)}$, of period dividing $n$.
Let $S$ denote the irreducible component of $P_n \cap (V \times \C)$ containing $\tilde \gamma$. We will start by showing that up to restricting $V$, we can assume that $\pi_1^{-1}(\{\lam_0\}) \cap S=\emptyset$.

First, observe that by our assumption on the lack of parabolic cycles, for every $(\lam,z) \in S$, the point $z$ is an attracting periodic point for $f_\lam$. Indeed, by assumption, the multiplier map $\rho: (\lam,z) \mapsto (f_\lam^n)'(z)$ satisfies $\rho(S) \subset \C \setminus S^1$, and $\rho(S) \cap \D = \emptyset$. Since $S$ is a connected Riemann surface, we therefore have $\rho(S) \subset \D$.   But since the maps $f_\lam$ are of finite type, there are only finitely many such points; in other words, $\pi_1: S \to V$ has finite degree. Let $z_0, \ldots, z_m$ denote the elements of ${\pi_1}^{-1}(\{\lam_0\}) \cap S$, if it is not empty.
Let $\epsilon>0$ be small enough that $z_0, \ldots, z_m$ all move holomorphically over $\mathbb{B}(\lam_0,\epsilon)$.
Then the points $(\lam_0, z_i)$ ($0 \leq i \leq m$) are not in the same connected component of $P_n \cap (\B(\lam_0,\epsilon) \times \C)$ as the curve $\tilde \gamma$. This means that up to replacing $V$ by $\B(\lam_0,\epsilon)$, we can indeed assume 
that $\pi_1^{-1}(\{\lam_0\}) \cap S= \emptyset$, which we do from now on. 

In particular, for \emph{every} curve $\tilde \gamma(t)=(\lam(t), z(t)) \subset S$ such that $\pi_1 \circ \tilde \gamma(t)=\lam(t) \to \lam_0$, we must have $z(t) \to \infty$, since any finite accumulation point would otherwise be an element of $\pi_1^{-1}(\{\lam_0\}) \cap S$. Let $D$ be an embedded holomorphic disk passing through $\lam_0$ in $V \subset M$, and such that for all 
$\lam \in D^*:=D \setminus \{\lam_0\}$, $f_\lam$ has no non-persistent singular relation of the form $f_\lam^k(v(\lam))=\infty$, for $0 \leq k \leq n$ and $v(\lam)$ a singular value, which exists since such parameters form a discrete set.  By Theorem A, it follows that no cycle of period up to $n$ can exit the domain at any $\lam \in D^*$. Therefore, by our assumption on the lack of parabolic cycles, the map $\pi_1: S_0 \to D^*$ is a covering map, where $S_0$ is any irreducible component of $S \cap (D^* \times \C)$.
Moreover, we have already proved that $\pi_1: S_0 \to D^*$ has finite degree, so there exists a conformal isomorphism $\Phi=(\Phi_1,\Phi_2): \D^* \to S_0$, such that $\Phi_1=\pi_1 \circ \Phi: \D^* \to D^*$ is a finite degree cover. 
Therefore, $\Phi_1$ extends holomorphically to a finite degree branched cover $\Phi_1: \D \to D$, with $\Phi_1(0)=\lam_0$. This ends the first part of the proof of reducing to a one dimensional setting.

Now chose any curve $t \mapsto u(t)$ in $\D^*$ such that $u(t) \to 0$, so that $\Phi_1(u(t)) \to \lam_0$. Then by a previous observation, $\Phi_2(u(t)) \to \infty$, since $\Phi(u(t)) \in S_0$ and $\pi_1\circ \Phi(u(t)) \to \lam_0$. (In particular, this means that $\Phi_2$ has a pole at $u=0$).

In order to lighten the notations, let $z_t:=\Phi_2(u(t))$,  $f_t:=f_{\Phi_1(u(t))}$, $\phi_t:=\phi_{\Phi_1(u(t))}$ and $\psi_t:=\psi_{\Phi_1(u(t))}$, where $f_\lam=\phi_\lam \circ f \circ \psi_\lam^{-1}$.

Since $f_t( z_t) = \phi_t \circ f \circ \psi_t^{-1} ( z_t)  \to v \in \chat$
(the next point in the virtual cycle), we have 
\[
f \circ \psi_{t}^{-1}(z_t) = \phi_{t}^{-1}(v+o(1))=v+o(1),
\]
using the fact that $\phi_0=\id$. 
Therefore  $\psi_{t}^{-1} ( z_t)) \in T$,
where $T$ is a tract of $f$ above the asymptotic value $v$. Let $G(u):=\psi_{\Phi_1(u)}^{-1}(z(\Phi_1(u))): \D \to \chat$.  Then $G$ is open by Lemma \ref{lem:Gopendiscrete}, and $G(\D)$ is a neighborhood of $\infty$. 
But we also proved that $G(\D)$ is contained in $T$ a tract of $f$  since $u(t)$ was arbitrary, which is absurd.

\medskip

\end{proof}

Theorem D together with Propositions \ref{prop:density_of_sp} and \ref{prop:acc_centers} implies density of parabolic parameters.   Notice that for this corollary it is not necessary to assume the existence of a function in the family which has  non omitted poles.
 \begin{coro}\label{cor:density parabolic parameters}
 Let $\{f_\lambda\}_{\lambda\in M}$ be a natural family of finite type meromorphic maps. Then parabolic parameters are dense in the activity locus.
\end{coro}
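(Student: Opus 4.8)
The plan is to combine the density of truncated parameters in the activity locus (Proposition \ref{prop:density_of_sp}) with the two approximation results (Theorem D and Proposition \ref{prop:acc_centers}), splitting according to whether the active singular value at a given parameter is asymptotic or critical. First I would reduce to the non-trivial case: if the natural family contains no parameter $\lam_*$ for which $f_{\lam_*}$ has a non-omitted pole, then by Proposition \ref{prop:excep}(a) cycles never disappear to infinity and the family behaves exactly as in the rational/entire setting, where density of parabolic parameters in the activity locus is classical (following \cite{mss}). So from now on assume there exists such a $\lam_*$, so that Proposition \ref{prop:density_of_sp} applies.

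Next, fix $\lam_0 \in \AA$ and a neighborhood $U$ of $\lam_0$; I must produce a parabolic parameter in $U$. Since $\lam_0 \in \AA$, some marked singular value $v_\lam$ is active at $\lam_0$. Consider two cases. If $v_\lam$ is an \emph{asymptotic} value active at $\lam_0$, then by Proposition \ref{prop:density_of_sp} there is a virtual cycle parameter $\lam_1 \in U$ (of some order $n$); at $\lam_1$, the relation $f_{\lam_1}^n(v_{\lam_1}) = \infty$ creates a non-persistent virtual cycle, so by Theorem B a cycle exits the domain at $\lam_1$ and moreover its multiplier tends to $0$, so in particular it is an \emph{attracting} cycle that disappears to infinity at $\lam_1$. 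Theorem D then yields a sequence $\lam_k \to \lam_1$ with $f_{\lam_k}$ having a non-persistent parabolic cycle; taking $k$ large gives a parabolic parameter in $U$, as desired.

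If instead $v_\lam$ is a \emph{critical} value active at $\lam_0$, then by Proposition \ref{prop:density_of_sp} there is a critical prepole parameter $\lam_1 \in U$ of some order $n$, i.e. a critical point is mapped to $\infty$ in $n$ steps non-persistently. I would like to apply Proposition \ref{prop:acc_centers}, but it requires a critical preimage $c_{\lam_1}$ of $v_{\lam_1}$ that is not an exceptional value of $f_{\lam_1}$; this is the point needing a little care. A meromorphic finite type map has at most two exceptional values (by Picard), and $v_{\lam_1}$, being a critical value, always has at least one critical preimage; if every critical preimage of $v_{\lam_1}$ were exceptional this would force strong constraints (at most two critical points, each exceptional), which can be excluded after a further small perturbation inside $\AA(v_\lam)$ using Lemma \ref{lem:perturbA}, or by simply choosing the critical preimage $c_{\lam_1}$ appropriately among the finitely many available. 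Once Proposition \ref{prop:acc_centers} applies, it gives parameters $\lam_k \to \lam_1$ for which $v_{\lam_k}$ is a superattracting periodic point — in particular an attracting cycle — of period $n+2$ or $n+3$. Whether this superattracting cycle "disappears to infinity" is not quite what Theorem D needs; rather, the cleaner route is: at $\lam_k$ the singular value $v_\lam$ still lies in $\AA$ (centers are in the closure of the activity locus), and more directly, a superattracting parameter is a limit of Misiurewicz-type or parabolic parameters by the classical argument applied to the attracting basin. The cleanest statement to invoke is that a parameter carrying a superattracting cycle is approximated by parameters where that cycle becomes parabolic (via the local model of the multiplier map on the hyperbolic component), so we again land a parabolic parameter in $U$.

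The main obstacle is the critical case: ensuring the hypotheses of Proposition \ref{prop:acc_centers} (non-exceptional critical preimage) can be met after perturbation, and then bridging from "superattracting cycle" to "parabolic parameter nearby" cleanly. The asymptotic case, by contrast, is a direct chain Proposition \ref{prop:density_of_sp} $\Rightarrow$ Theorem B $\Rightarrow$ Theorem D. I would organize the write-up so that the asymptotic case is dispatched first in two lines, and then spend the bulk of the argument on the critical case, invoking Lemma \ref{lem:perturbA} to move $\lam_0$ slightly if needed so that the relevant critical preimage is non-exceptional, and citing the standard hyperbolic-component multiplier argument to pass from centers to parabolic parameters.
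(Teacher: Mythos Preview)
Your asymptotic case is fine and matches the paper exactly: virtual cycle parameter $\Rightarrow$ Theorem B $\Rightarrow$ attracting cycle exiting the domain $\Rightarrow$ Theorem D $\Rightarrow$ parabolic parameter in $U$.

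The critical case, however, has a genuine gap at the last step. You want to pass from a single center $\lam_k \in U$ to a nearby parabolic parameter by invoking ``the local model of the multiplier map on the hyperbolic component''. In the rational setting this is indeed standard, because the multiplier map is proper onto $\D$. But the entire point of this paper is that in the meromorphic setting attracting cycles can \emph{disappear to infinity} before their multiplier ever reaches $\partial\D$; equivalently, the hyperbolic component containing your center may have virtual centers on its boundary, or may simply extend beyond $U$ so that no boundary point lies in $U$ at all. So from one center you cannot conclude that a parabolic parameter exists in $U$.

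The paper's argument in the critical case is different and uses crucially that Proposition~\ref{prop:density_of_sp} gives centers of \emph{arbitrarily large period} in $U$ (via Proposition~\ref{prop:acc_centers}). Since $\# S(f_\lam)$ is finite and independent of $\lam$, the Fatou--Shishikura inequality bounds the number of attracting cycles uniformly; hence among infinitely many centers with pairwise distinct periods, at most finitely many of the corresponding attracting cycles can persist as attracting throughout $U$. Any one of the remaining cycles must therefore cease to be attracting at some parameter in $U$: either it becomes parabolic there (done), or it disappears to infinity there, and then Theorem~D produces parabolic parameters in $U$. This finiteness-versus-unbounded-periods counting is the missing idea in your write-up; without it, the critical case does not close.
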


\begin{proof}
 Let $\la_0$ belong to the activity locus and let $V$ be an arbitrary small neighborhood of $\la_0$. Our goal is to produce at least one parabolic cycle in $V$.

By Lemma \ref{lem:highattrper}, for every $k \in \N^*$ large enough, we can find $\lam_k \in V$ such that   
$f_{\lam_k}$ has an attracting cycle of period at least $k$. Each of these attracting cycles must capture at least one singular value.
But the number of singular values is finite and constant throughout $V$. Therefore, only finitely many of these attracting cycles can be followed holomorphically over $V$ and remain persistently attracting on  $V$.

This implies either the existence of parabolic parameters or the existence of a parameter $\lam$ at which an attracting cycle disappears to infinity. In the first case we are done, and in the second as well by Theorem D.

\end{proof}


\subsection{$ \JJ-$stable parameters: proof of Theorem E, Corollary F and Corollary G} \label{sec:MSS}

From our results above we can now prove  Theorem E. 

\begin{proof}[\bf Proof of Theorem E]
	If for all $\lam \in M$, $f_\lam$ never has any non-omitted pole, 
	then the result is trivial by the classical theory and 
	Proposition \ref{prop:excep}. We therefore assume that the assumptions of Proposition \ref{prop:density_of_sp} are satisfied.

	We will prove that $(1) \Leftrightarrow (2)$, $(1)  \Rightarrow (3)  \Rightarrow (2) \Leftrightarrow (5) \Rightarrow (4) \Rightarrow (3)$.

	\begin{itemize}
		\item $(1) \Rightarrow (2)$:  This implication mostly follows the arguments in \cite{mss}.  	If the Julia set moves holomorphically over $U$, there is $\la_0\in U$ and a holomorphic motion $H:U \times J_{\la_0} \to \hat{\C}$ preserving the dynamics. Hence $H_\la(J_{\la_0})=J_\la$ for all $\la\in U$ and 
		\[
		H_\la (f_{\la_0}(z)) = f_\la (H_\la(z))
		\]
	for all $z\in J_{\la_0}$. This means that $H_\la$ maps  critical points (resp. values) of $f_{\la_0}$ in $J_{\la_0}$ to critical points (resp. values) of $f_\la$  in $J_\la$ (see e.g. \cite{mcmullen2} for details). Likewise $H_\la$ maps  asymptotic values of $f_{\la_0}$  in $J_{\la_0}$ to asymptotic values of $f_\lambda$  in $J_{\la}$, since the latter are locally omitted values.  

		Hence singular values and their full orbits  in the Julia set can be followed by the conjugacy $H_\la$. 
		Since $f_{\lambda_0}$ has finitely many singular values, the union of their (forward) orbits is a countable set; but the Julia set is perfect and uncountable, hence we can consider   three points $z_1, z_2$ and $z_3$ in $J_{\la_0}$ which are disjoint from the forward orbits of the singular values of $f_{\la_0}$. Consequently, by the injectivity of the  holomorphic motion,  for all $\la\in U$, $H_\lam(z_i)$, $i=1,2,3$, is disjoint from the forward orbits of the singular values of of $f_\la$. By  Montel's Theorem  it follows that the forward orbits of the singular values form normal families, and hence every singular value is passive in $U$. On the other hand, if a singular orbit lies in the Fatou set of $f_{\la_0}$ then it must remain in the Fatou set of $f_\la$ for every $\la\in U$. The orbit then misses all points in the Julia set and the same argument applies.
		\item $(2) \Rightarrow (1)$: Assume that the Julia set does \emph{not} move holomorphically over $U$. Then by  Proposition \ref{lem:singular values iff no holo motion}, either two periodic points in the Julia set collide, or one periodic cycle in the Julia set exits the domain. In the first case, 
		this means that there exists $\lam_0 \in U$ with a non-persistent parabolic periodic point: there exists $z_{\lam_0} \in \C$ such that $f_{\lam_0}^n(z_{\lam_0})=z_{\lam_0}$, $(f_{\lam_0}^n)'(z_{\lam_0})=1$,
		and $\lam \mapsto (f_{\lam_0}^n)'(z_{\lam_0})$ is non-constant on $U$.
		Then its parabolic basin must contain at least one singular value $v_{\lam_0}$, and therefore be active. 
		In the second case, a cycle exits the domain at $\lam_0 \in U$, and by Theorem A, $f_{\lam_0}$ has either an active critical value or an active asymptotic value.
		\item $(1) \Rightarrow (3)$: Assume that the Julia set moves holomorphically over $U$, and let $H_{\la}$ be the conjugating holomorphic motion as above. Then $H_{\la}$ maps repelling periodic points of $f_{\lam_0}$  to repelling periodic points of $f_\lam$ in $J(f_\lam)$ of the same period. Let $N$ be the maximal period of non-repelling cycles for $f_{\lam_0}$ (which is finite by Fatou-Shishikura's inequality \cite{el}); then for all $\lam \in U$, cycles of period more than $N$ must be repelling, which implies that attracting cycles have period at most $N$.
		\item  $(3) \Rightarrow  (2)$: 
		This follows directly from Lemma \ref{lem:highattrper}.
\item $(5)\Rightarrow (4)$: If there are no non-persistent parabolic parameters in $U$, there cannot be any  $\lam \in U$ such that $f_\lam$ has a non-persistent virtual cycle, and hence no cycle can exit the domain for a parameter in $U$ by Theorem A. Hence no attracting cycle can disappear to infinity nor change into a repelling cycle, which implies that all attracting cycles remain attracting throughout $U$. 
\item $(4) \Rightarrow (3)$ This implication is obvious.
	\end{itemize}
\end{proof}
	
We can therefore define the bifurcation locus of the family $M$, $\bif(M)$, as the set of $\lam \in M$ for which the above conditions are not satisfied in any neighborhood of $\lam$.
With this notation, Corollary F states: 

\begin{corF}
	Let $\{f_\lam\}_{\lam \in M}$ be a natural family of finite type meromorphic maps. Then $\mathring{\bif(M)}=\emptyset$.
\end{corF}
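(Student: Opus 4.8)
The plan is to run the classical Ma\~n\'e--Sad--Sullivan openness-and-density argument, now that the characterizations of $J$-stability in Theorem~E are in hand. Since by definition the set of $J$-stable parameters is open and $\bif(M)$ is its complement, it suffices to prove that every nonempty open subset of $M$ contains a $J$-stable parameter; and since $M$ is a manifold it is enough to treat an arbitrary open ball $B\subset M$, which is simply connected and hence a legitimate domain $U$ in the statement of Theorem~E.

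The key object is the function $N:M\to\N$ sending $\lambda$ to the number of attracting cycles of $f_\lambda$ (cycles in $\C$ of period $\geq 1$ with multiplier of modulus $<1$). First I would check that $N$ is \emph{uniformly bounded}: every immediate basin of an attracting cycle of a finite type map captures at least one singular value, distinct cycles have disjoint immediate basins, and $\#S(f_\lambda)=\#S(f)$ is constant throughout the natural family, so $N(\lambda)\le\#S(f)$ for all $\lambda$. Second, I would check that $N$ is \emph{lower semicontinuous}: if $z$ lies on an attracting cycle of period $p$ for $f_{\lambda_0}$ then $(f_{\lambda_0}^p)'(z)\neq 1$, so by the implicit function theorem the cycle continues holomorphically near $\lambda_0$ with multiplier varying continuously; since that multiplier has modulus $<1$ at $\lambda_0$ it stays so nearby, and distinct cycles stay distinct, whence $N(\lambda)\ge N(\lambda_0)$ for $\lambda$ close to $\lambda_0$.

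With these two facts, the argument closes quickly. Fix an open ball $B\subset M$ and set $n_0:=\sup_{\lambda\in B}N(\lambda)$, a finite integer by boundedness, attained at some $\lambda_0\in B$. By lower semicontinuity there is a smaller open ball $B_0\subset B$ centered at $\lambda_0$ on which $N\ge n_0$; combined with $N\le n_0$ on $B\supset B_0$ this gives $N\equiv n_0$ on $B_0$. Thus $B_0$ is a simply connected domain on which the number of attracting cycles is constant, so by the implication $(4)\Rightarrow(1)$ of Theorem~E the Julia set moves holomorphically over $B_0$, i.e.\ every parameter in $B_0$ is $J$-stable. Hence $B$ meets $M\setminus\bif(M)$; as $B$ was arbitrary, $\mathring{\bif(M)}=\emptyset$.

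The only steps demanding genuine care — and both are classical facts about finite type maps rather than new difficulties — are the uniform bound $N(\lambda)\le\#S(f)$, which rests on the Fatou-type statement that attracting basins of finite type maps capture singular values, and the persistence of attracting cycles under perturbation. I expect no real obstacle from the presence of poles here, since attracting cycles lie in $\C$ and have multiplier $\neq 1$, so the delicate phenomena of Theorems A--D (cycles exiting the domain, parabolic collisions) do not interfere with this final bootstrapping.
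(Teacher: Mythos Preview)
Your argument is correct, but it is not the route the paper takes. The paper's proof of Corollary~F proceeds by \emph{iterated perturbation via parabolic parameters}: starting from an active parameter, one uses Corollary~\ref{cor:density parabolic parameters} (density of parabolics in the activity locus) to perturb to a nonpersistent parabolic, then perturb off the multiplier circle to capture one singular value in an attracting basin; if another singular value is still active one repeats, and finiteness of $S(f)$ terminates the process at a parameter where all singular values are passive. Your proof instead runs the original Ma\~n\'e--Sad--Sullivan maximality argument on the attracting-cycle counting function $N(\lam)$: bounded above by $\#S(f)$, integer-valued (so its supremum on any ball is attained), and lower semicontinuous, hence locally constant near a maximizer, after which you invoke $(4)\Rightarrow(1)$ of Theorem~E directly.

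Both are legitimate once Theorem~E is in hand. Your route is arguably cleaner and more in the spirit of \cite{mss}, since it extracts the conclusion from a single counting function rather than tracking singular values one at a time; it also makes no explicit appeal to parabolic density. The paper's route, by contrast, actually produces a parameter at which every singular value is captured by an attracting cycle, which is a slightly stronger intermediate conclusion (relevant to Corollary~G). Note finally that your closing remark is accurate but should not be overread: the implication $(4)\Rightarrow(1)$ you invoke does go through $(3)\Rightarrow(2)\Rightarrow(1)$ in the paper's proof, and those steps rest on Theorems~A and~B; so the delicate meromorphic phenomena are already baked into Theorem~E, and only the \emph{final} bootstrapping is classical.
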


\begin{proof}
By Theorem E, parabolic parameters are dense in the bifurcation locus. Hence whenever a singular value is active at say $\lambda_0$, one may perturb $\lambda_0$ to make it into a non-persistent parabolic parameter  $\lambda_1$ whose parabolic basin attracts one of the singular values, say $v^1_\lambda$. The parameter  $\lambda_1$ can be perturbed to $\lambda_1'$ for which  $v^1_\lambda$ is attracted to an attracting cycle, persistent in a neighborhood of $\lambda_1'$. If this parameter is not in the bifurcation locus, we are done; otherwise, there is another singular value which is active, and we can perturb it to a new parabolic parameter $\lambda_2$ while keeping $v^1_\lambda$ to be attracted to its attracting cycle, and repeat the reasoning. Since there are only finitely many singular values, this proves that arbitrarily close 	to any $\lam_0 \in \bif(M)$ we may find $\lam_*$ such that all singular values of $f_{\lam_*}$ are passive (because each is attracted to an attracting cycle), and therefore $\lam_* \notin \bif(M)$.
\end{proof}

Finally, we prove Corollary G.

\begin{proof}[Proof of Corollary G]
	Let $v(\lam_0)$ be a singular value of $f_{\lam_0}$, and let $m$ be the period of the attracting cycle it converges to.
	Let $h_n(\lam):=f_\lam^n(v(\lam))$: then $h_{km}(\lam_0)$ converges (as $k \to +\infty$) to an attracting periodic point, which we denote by $h(\lam_0)$. Moreover, this remains true for all $\lam$ in a suitably small neighborhood of $\lam_0$: the sequence $h_{km}$ converges to $h$ near $\lam_0$, where $h(\lam)$ is an attracting periodic point of period $m$.
	
	By Theorem E, all singular values are passive on $U$, therefore the sequence $h_{km}$ admits subsequences that converge locally uniformly on $U$; and any such limit must coincide with $h$ on a neighborhood of $\lam_0$. Therefore, the sequence $h_{km}$ in fact converges on $U$ to a map which we still denote by $h$. Since $f_\lam^m(h(\lam))-h(\lam)=0$ on a neighborhood of $\lam_0$, 
	we have that $h(\lam)$ is a periodic point of period (dividing) $m$ for all $\lam \in U$.
	Moreover, it is clear that this periodic point cannot be repelling: so the multiplier map 
	$\rho(\lam):=(f_\lam^m)'(h(\lam))$ takes values in $\overline{\D}$.
	By the openness theorem, either $\rho$ is non-constant and open, or $\rho$ is constant equal to $\rho(\lam_0) \in \D$. In either case, $\rho(\lam) \in \D$ for every $\lam \in U$, which proves that $v(\lam)$ remains captured by an attracting cycle for all $\lam \in U$. 
	
	Since this holds for any singular value, Corollary G is proved.
\end{proof}


\section{A simple example with a dense bifurcation locus}
\label{bifdense}

In this section we construct a natural family of transcendental entire maps of the form $f_
\lambda=f+\lambda$ with $\lambda\in\C$, and  $f$ chosen so that $f_{\lambda_n}$ has a non-persistent parabolic point for all  $\lambda_n$ in  a countable dense subset of $\C$. It will follow that the bifurcation locus for this family has nonempty interior. With this construction we have no control on the set of singular values for $f$, and consequently, for $f_\lambda$.

\begin{prop}[Bifurcation locus with nonempty interior]
There exists an entire map $f$ such that the natural  family $\{f_\lambda=f+\lambda\}_{\lam \in \C}$  is not $\JJ$-stable on any open subset of $\C$.
\end{prop}
\begin{proof}
Let $\{\lam_n: n\in\N\}$  be a countable  dense subset of  the complex plane. 
By  classical results (see e.g. Theorem \cite[Theorem 15.13]{rudin}),  there exists en entire map $f$ such that for all $n \in \N$, 
	\begin{enumerate}
		\item $f(n)=n-\lam_n$
		\item $f'(n)=-1$
		\item $f''(n) = 1$.
	\end{enumerate}	
It follows that  for all $n \in \N$,   $f_{\lam_n}(n)=n$,  $f_{\lam_n}'(n)=-1$, and $f_{\lam_n}''(n) = 1$.
In other words, each $f_{\lambda_n}$ has a  parabolic fixed point of multiplier $-1$ at the positive integer $n$.   We now show that such fixed  points are not  persistently parabolic by studying their multiplier map. 


By the Implicit Function Theorem, for each $n \in \N$  there exists a holomorphic map $\lam \mapsto z_n(\lam)$ defined in a neighborhood $U_n$ of $\lam_n$, such that $f_{\lam}(z_n(\lam)) = z_n(\lam)$ and $z_n(\lam_n)=n$. 

Differentiating the relation
$$
f_{\lambda}(z_n(\lambda))-z_n(\lambda)\equiv 0 \text{ on $U_n$,}
$$
we find $\frac{d}{d\lam}_{|\lam=\lam_n} z_n(\lam) =  \frac{1}{2}$.                 

We now consider the multiplier map 
 defined as  $\rho_n(\lam):=f_{\lam_n}'(z_n(\lam))$. We want to show that it is not constant on $U_n$. We have
  $$
  \frac{d}{d\lam}_{|\lam=\lam_n} \rho_n(\lam) 
  = f_{\lam_n}''(z_n(\lam_n)) \frac{d}{d\lam}_{|\lam=\lam_n} z_n(\lam) + \frac{d}{d\lam}_{|\lam=\lam_n} f_\lam(z_n) = 1 \cdot \frac{1}{2} + 1 = \frac{3}{2} \neq 0.
  $$
 Therefore, the map $\rho_n$ is non-constant, and 
	so the fixed point $z_n(\lam)$ is non-persistently neutral for $f_{\lam}$. 
We now show that this implies that the family $\{f_\lam\}_{\lam \in \C}$ cannot be $\JJ$-stable in any neighborhood of $\lam_n$.

	Indeed, assume for a contradiction that $\{f_\lam\}_{\lam \in \C}$ is $\JJ$-stable on some disk $D$ centered at $\lambda_n$, and let $h_{\lam}$ denote the corresponding holomorphic motion of $\JJ(f_\lam)$,  which must respect the dynamics. 
	Since $z_n(\lam_n)=n$ is a parabolic fixed point for $\lambda_n$, it lies in $\JJ(f_{\lam_n})$. Then the motion  $h_{\lam}(n)$ must coincide with $z_n(\lam)$ for $\lam \in D \cap U_n$, by unicity in the Implicit Function Theorem. In particular,  $z_n(\lam)=h_{\lam}(n)$ must be in  $\JJ(f_{\lam})$  for all $\lambda\in U_n \cap D$. On the other hand, since $\rho_n$ is non-constant,   there exists $\lam_*$ arbitrarily close to $\lambda_n$ such that  $\rho_n(\lam_*) \in \D$;  hence $z(\lambda_*) $ is attracting and must belong to the Fatou set of $f_{\lambda_*}$, a contradiction.	
	We are done since $\{\lam_n: n \in \N\}$ is dense in $\C$.
\end{proof}

\bibliographystyle{amsalpha}	
\bibliography{BifMeroABF}	
\end{document}